\newtheorem{thm}{Theorem}[section]
\newtheorem{lem}[thm]{Lemma}
\title{Deterministic and Stochastic Studies on Additional Food Provided Prey-Predator Systems with Group Defence among Prey and Mutual Interference among Predators}
\author[1]{D Bhanu Prakash}
\author[2]{D K K Vamsi}
\affil[1, 2]{ \ Department of Mathematics and Computer Science, Sri Sathya Sai Institute of Higher Learning, India.}
\affil[2]{Center for Excellence in Mathematical Biology, Sri Sathya Sai Institute of Higher Learning, India.}
\affil[1]{Corresponding Author. Email: dbhanuprakash@sssihl.edu.in}
\date{}
\begin{document}

\maketitle

\begin{abstract} {{
\noindent The influence of competition and additional food on prey-predator dynamics has attracted considerable interest from mathematical biology researchers in recent times. In this study, we consider an additional food provided prey-predator model exhibiting Holling type-IV functional response among mutually interferring predators. We prove the existence and uniqueness of global positive solutions for the proposed model. We study the existence and stability of equilibrium points and further explored bifurcations with respect to the additional food and competition. We further study the global dynamics of the system and discuss the consequences of providing additional food. Later, we do the time-optimal control studies with respect to the quality and quantity of additional food as control variables by transforming the independent variable in the control system. Making use of the Pontraygin maximum principle, we characterize the optimal quality of additional food and optimal quantity of additional food. We further enhanced the model by incorpoating both continuous and discrete noise. We further characterized and numerically simulated the stochastic optimal controls through Sufficient Stochastic Maximum Principle. We show that the findings of these dynamics and control studies have the potential to be applied to a variety of problems in pest management.
}}
\end{abstract}

{ \bf {keywords:} } Prey-Predator System; Holling type-IV response; Additional Food; Mutual Interference; Bifurcation; Time-optimal control; Brownian Motion; Poisson Process; Pest management;

{ \bf {MSC 2020 codes:} } 37A50; 60H10; 60J65; 60J70; 60J76; 49K45;


\section{Introduction} \label{sec:Intro}

\indent In the vast and complex web of life, no species exists in isolation. Every organism is part of a dynamic system, constantly interacting with others and adapting to its environment. Ecology, the study of these interactions, helps us understand how populations—groups of the same species living in a shared habitat—coexist, compete, and evolve. Species interactions can be categorized based on how each participant is affected. Competition arises when species vie for limited resources, often leading to reduced success for both. In contrast, mutualism benefits both parties, such as bees pollinating flowers. Predation benefits one species at the expense of another, while commensalism benefits one without affecting the other. These interactions shape the structure and function of ecosystems. 


This study focuses on a one prey–one predator system, with a particular emphasis on Mutual Interference—a form of competition where predators hinder each other while hunting the same prey. Understanding this interaction has valuable implications for biological conservation and pest management, helping refine strategies to protect ecosystems and control invasive species effectively.

Alfred J. Lotka \cite{lotka1925elements} and Vito Volterra \cite{volterra1927variazioni} proposed the following first mathematical model for interacting species in $1925$. 
\begin{eqnarray*}
	\frac{\mathrm{d} N}{\mathrm{d} T} &=& r N - a N P , \\
	\frac{\mathrm{d} P}{\mathrm{d} T} &=& - m P + b N P.
\end{eqnarray*}

Here $r$ represents the prey growth rate (Malthus law); $m$ represents the predator mortality rate (Mass action law); $a, \ b$ stands for the predation coefficients with $b = e a$, where $e$ is the conversion coefficient. This model describes the traditional concept of prey-predation cycle: The predator populations crash shortly after their prey populations experience a crash; This cyclic behavior is observed in many species like Canadian Lynx vs Snowshoe hare. Ever since Lotka and Volterra proposed the first mathematical model, among interacting species, the field of mathematical ecology has flourished with diverse models offering valuable insights. 

The generic model of two interacting populations, specifically, populations in competition for a limiting resource, is given by
\begin{eqnarray}
	\frac{\mathrm{d} N}{\mathrm{d} T} &=& f(N) - h(N,P),  \nonumber \\
	\frac{\mathrm{d} P}{\mathrm{d} T} &=& - g(P) + e h(N,P). \label{genmodel}
\end{eqnarray}

Here, $N$ and $P$ stands for prey and predator populations respectively. The functions $f(N), \ g(P),\ h(N,P)$ stands for the prey growth term, predator death term and the predation behavior respectively. The parameter $e$ is the conversion factor of prey to predator biomass.

In this study, we incorporate the group defence among prey using the Holling type-IV functional response or the Monod-Haldane functional response \cite{metz2014dynamics,kot2001elements}. This response is displayed by many organisms in nature \cite{yano2012cooperative, mcclure2011defensive}. In addition to this, We also consider the provision of additional food to predators, which is considered as the product of it's quality and quantity. Numerous real-world examples highlight the importance of examining the quality of additional food \cite{magro2004comparison, vandekerkhove2010pollen, winkler2005plutella} as well as its quantity \cite{vandekerkhove2010pollen, put2012type, urbaneja2013sugar}. Some of these features like additional food, group defence and mutual intereferene are studied in isolation. However, to the best of our knowledge, this work is an initial attempt to capture all these three characteristics in the same model.

In this study, we perform the deterministic and stochastic dynamics and control studies on the proposed model. The global dynamics and bifurcations of Holling type-II functional responses have been extensively studied in the literature \cite{Islam2024, Type4-7, Fractional3, Type4-9, AAAA1}. Several studies have also explored models that exhibit functional responses other than Holling’s \cite{AAAA3, AAAA9, AAAA7, AAAA5}. The deterministic and stochastic dynamics of a prey-predator model with a Beddington-De Angelis functional response is studied in \cite{AAAA1}. The effect of additional food on natural enemies exhibiting Holling type-II functional responses is well-documented in \cite{VP1,VP3,VP2, Application1, AAAA2}. The authors in \cite{V3EarthSystems} and \cite{V3JTB} have studied additional food provided prey-predator systems involving type-III and type-IV functional responses, respectively. 


A large number of researchers introduced a stochastic environmental variation using the Brownian motion into parameters in the deterministic model to construct a stochastic model \cite{oksendal2013}. The authors in \cite{Stochastic6} proved that a stochastic prey - predator model with a protection zone has a unique stationary distribution which is ergodic. In  \cite{Stochastic8} the authors obtained stochastic permanence for a stochastic prey - predator system with Holling-type III functional response. The work \cite{Stochastic7} deals with a Holling-type II stochastic prey - predator model with additional food that has an ergodic stationary distribution. The authors in \cite{Stochastic5} studied the deterministic and stochastic dynamics of a modified Leslie-Gower prey - predator system with a simplified Holling-type IV scheme. The work \cite{Type3-3} deals with the  survival and ergodicity of a stochastic Holling-type III prey - predator model with markovian switching in an impulsive polluted environment. Authors in \cite{Stochastic5, Stochastic4} studied the deterministic and stochastic dynamics of a modified Leslie-Gower prey-predator system with simplified Holling type-IV functional response. \cite{Levyjumps2} uses the stochastic averaging method to analyze the modified stochastic Lotka-Volterra models under combined Gaussian and Poisson noise. \cite{Levyjumps} studies the dynamics and dynamics of a Stochastic One-Predator-Two-Prey time delay system with jumps.

In order to avoid extinction or drastic changes in the population of various species, control measures are introduced to this system. Among the various control interventions, the control intervention that is focused in this work is the additional food for predator. Mathematically, control problems are classified into various problems like Bolza problem, Lagrange problem, Mayer problem, Time-Optimal Problem \cite{liberzon2011calculus, evans1983introduction, nisio2015stochastic, fleming2012deterministic}.  In addition to this, optimal control studies on various prey-predator models with Holling type-III and type-IV responses are thoroughly investigated in \cite{ANT3Quality,ANT3Quantity,ANT4Quality,ANT4Quantity}. The optimal control studies on various prey-predator models with Holling type II response are thoroughly investigated in \cite{ControlDelay, TimeOptimalImpulse1, OptimalControlANN, OptimalControlStochastic,VP3, VP2}. In the stochastic setting, optimal control problems like Lagrange problem are studied in various works \cite{sss1, sss2, Harvesting1}. Very little work is available on the stochastic time-optimal control problems \cite{Theory8, Theory10, TimeOptimalStochastic, Theory11}. 

In this study, we extend the generalized prey-predator model (\ref{genmodel}) by incorporating three key ecological features. First, we model group defense among prey using a Holling type-IV functional response or the Monod-Haldane functional response \cite{metz2014dynamics,kot2001elements}, which captures the reduction in predation efficiency at higher prey densities. Second, we account for mutual interference among predators by modifying the functional response to include the time predators spend on both searching for and handling prey, as well as on interactions with other predators. Third, we introduce additional food provision for predators, modeled as the product of its quality and quantity. Such food supplementation has been recognized as a valuable tool in ecological conservation and pest management. By integrating these components, we develop a comprehensive prey-predator system that reflects group defense in prey, mutual interference and provision of additional food among predators.

The article is structured as follows: Section \ref{sec:4midmodel} derives the functional response with additional food, prey defence and mutual interference and formulates the prey-predator model with these functional responses. Section \ref{sec:4midposi} proves the positivity and boundedness of the solutions of the proposed system. Section \ref{sec:4midequil} investigates the conditions for the existence of various equilibria. The local stability of these equilibria is presented in section \ref{sec:4midstab}. In section \ref{sec:4midbifur}, we present the various possible local bifurcations exhibited by the proposed model both analytically and numerically. Section \ref{sec:4midglobaldynamics} studies the global dynamics of the proposed system in the parameter space of quality and quantity of additional food. Section \ref{sec:4midconseq} provides a detailed analysis of the consequences of providing additional food. Section \ref{sec:4midtimecontrol} presents the theoretical and numerical studies on time-optimal control problems with quality or quantity of additional food as control parameters. Section \ref{sec:4mismodel} extends the deterministic model with continuous and discrete noise. Section \ref{sec:4miscontrol} defines the time-optimal control problem and provides analytical and numerical solutions for the control problem with two controls and two noises. Finally, we present the discussions and conclusions in section \ref{sec:disc}.

\section{Model Formulation} \label{sec:4midmodel}

\indent In this section, we formulate the mathematical model to study the consequences of providing additional food to the prey-predator interactions exhibitng Holling type-IV functional response among mutual interfering predators. 

In the present study, we model the prey growth in the absence of predators using logistic equation. In the presence of predators, we assume that the predation behavior follows Holling type-IV functional response. In addition, we incorporate the mutual interference behavior of the predators in this model. We also assume that the additional food supplemented to predators is distributed uniformly in the habitat. The functional response of predator towards prey and additional food is given in section \ref{4midsecfr}.  

\subsection{Derivation of Holling Type-IV Functional Response} \label{4midsecfr}

Let $N,\ P$ denote the densities of prey and predator respectively. 

Let $\triangle T$, a small period of time (small in the sense that the predator and prey densities remain roughly constant over $\triangle$T) that the predator spends for searching prey and/or additional food, consuming the captured prey and/or additional food, and interacting with other predators.

Let $\triangle T_S$ denote the part of $\triangle T$ that the predator spends for searching prey and/or additional food. 

Let $\triangle T_N$ denote the part of $\triangle T$ that the predator spends for handling the prey.

Let $\triangle T_A$ denote the part of $\triangle T$ that the predator spends for handling the additional food. 

Let $\triangle T_P$ denote the part of $\triangle T$ that the predator spends for interacting with other predators.

So, we have
\begin{equation} \label{4middelt}
	\triangle T = \triangle T_S + \triangle T_N + \triangle T_A + \triangle T_P.
\end{equation}

Let $h_N, h_A$ and $h_P$ denote the length of time required for each interaction between a predator and prey item, additional food, other predators respectively.

Let $e_N$ represents the search rate of the predator per unit prey availability and $e_A$ represents the search rate of the predator per unit quantity of additional food. Let $e_P$ represents the rate constant at which a predator encounters other predators.

Let $b$ represents the inhibitory effect or the group defense of the prey. Then, from \cite{V4DEDS}, the total handling time for the prey ($\triangle T_N$) equals $h_N$ times number of prey caught, which is given by, $h_N \frac{e_N}{bN^2+1} N \triangle T_S$. 

Now, the handling time for the additional food equals the handling time for one additional food item times the total density of additional food encountered. Also,the additional food encountered is proportional to the search time and the additional food density. Hence, the quantity of additional food encountered during this time is proportional to $\triangle T_S  A = e_A T_S A$, where $e_A$ is the proportionality constant, which denotes the catchability of the additional food. Now, the total handling time for the additional food($\triangle T_A$)  equals $h_A$ times additional food encountered, which is given by, $h_A e_A A \triangle T_S$. 

Similarly, $\triangle T_P$ is given by $h_P e_P P  \triangle T_S$.

Hence, 

\begin{eqnarray}
	\triangle T_N &=& h_N \frac{e_N}{bN^2+1}  N \triangle T_S. \label{4midtn} \\
	\triangle T_A &=& h_A e_A A  \triangle T_S. \label{4midta} \\
	\triangle T_P &=& h_P e_P P  \triangle T_S. \label{4midtp} 
\end{eqnarray}

From (\ref{4middelt}), (\ref{4midtn}), (\ref{4midta}) and (\ref{4midtp}), we have

\begin{equation} \label{4middeltt}
	\triangle T = \left(1 + h_N \frac{e_N}{bN^2+1} N + h_A e_A A  + h_P e_P P \right) \triangle T_S.
\end{equation}

Now, as a predator encounters $N \frac{e_N}{bN^2+1} \triangle T_S$ prey items during the period $\triangle T$, the overall rate of encounters with prey over the time interval $\triangle T$ is given by

\begin{equation*}
	\begin{split}
		g(N,P,A) & = \frac{\text{Total number of Prey Caught}}{\text{Total Time}}\\
		& = \frac{\frac{e_N}{bN^2+1} N \triangle T_S}{\triangle T} = \frac{\frac{e_N}{bN^2+1} N \triangle T_S}{(1 + h_N \frac{e_N}{bN^2+1} N + h_A e_A A  + h_P e_P P ) \triangle T_S} \\
		& = \frac{\frac{e_N}{bN^2+1} N}{1 + h_N \frac{e_N}{bN^2+1}  N + h_A e_A A  + h_P e_P P } \\
		& = \frac{e_N N}{(bN^2+1)(1 +  + h_A e_A A  + h_P e_P P) + h_N e_N N}.
	\end{split}
\end{equation*}
Dividing numerator and denominator by $e_N h_N$

\begin{equation*}
	\begin{split}
		g(N,P,A) & = \frac{\frac{1}{h_N} N}{(bN^2+1)\left(\frac{1}{e_N h_N}+\frac{h_A e_A}{h_N e_N}A+\frac{h_P e_P}{h_N e_N} P\right)+N}.
	\end{split}
\end{equation*}

Similarly, as a predator encounters $e_A A \triangle T_S$ quantity of additional food during the period $\triangle T$, the overall rate of encounters with additional food over the time interval $\triangle T$ is given by

\begin{equation*}
	\begin{split}
		h(N,P,A) & = \frac{\text{Total Additional Food Encountered}}{\text{Total Time}}\\
		& = \frac{e_A A \triangle T_S}{\triangle T} = \frac{e_A A \triangle T_S}{(1 + h_N  \frac{e_N}{bN^2+1} N  + h_A e_A A  + h_P e_P P ) \triangle T_S} \\
		& = \frac{ e_A A}{1 + h_N \frac{e_N}{bN^2+1} N  + h_A e_A A  + h_P e_P P } \\
		& = \frac{e_A A (b N^2+1)}{(bN^2+1)(1 +  + h_A e_A A  + h_P e_P P) + h_N e_N N}.
	\end{split}
\end{equation*}
Dividing numerator and denominator by $e_N h_N$

\begin{equation*}
	\begin{split}
		h(N,P,A) & = \frac{\frac{e_A}{e_N h_N} A (b N^2 + 1)}{(bN^2+1)\left(\frac{1}{e_N h_N}+\frac{h_A e_A}{h_N e_N}A+\frac{h_P e_P}{h_N e_N} P\right)+N}.
	\end{split}
\end{equation*}

Hence, the Holling type-IV functional response of the mutually interfereing predator towards the prey ($g(N,P,A)$) and the additional food ($h(N,P,A)$) respectively are given by
\begin{eqnarray}
	g(N,P,A) = \frac{\frac{1}{h_N} N}{(bN^2+1)(\frac{1}{e_N h_N}+\frac{h_A e_A}{h_N e_N}A+\frac{h_P e_P}{h_N e_N} P)+N}. \label{4midg} \\
	h(N,P,A) = \frac{\frac{e_A}{e_N}\frac{1}{h_N} A (b N^2 + 1)}{(bN^2+1)(\frac{1}{e_N h_N}+\frac{h_A e_A}{h_N e_N}A+\frac{h_P e_P}{h_N e_N} P)+N}. \label{4midh}
\end{eqnarray}

\subsection{Derivation of Model} \label{4midsecderm}
Now, we derive the additional food provided mutually interfering prey-predator model representing the prey-predator dynamics when the predator is provided with additional food (which is of non-reproducing item) based on the functional response derived in the section \ref{4midsecfr}. 

The deterministic prey-predator model exhibiting Holling type-IV functional response among mutually interfering predators and where prey grow logistically is given by
\begin{equation*}
	\begin{split}
		\frac{\mathrm{d}N}{\mathrm{d}T} & = r N \left(1-\frac{N}{K}\right) - g(N,P,A) P, \\
		\frac{\mathrm{d}P}{\mathrm{d}T} & = \left(\epsilon_N g(N,P,A)+\epsilon_A h(N,P,A) \right)P - m_1 P. \\
	\end{split}
\end{equation*}

Here the parameters $r$ and $K$ represent the intrinsic growth rate and carrying capacity of the prey respectively. $m_1$ is the death rate of predator in the absence of prey which is also termed as starvation rate. $\epsilon_N,\ \epsilon_A$ are the conversion factors that represents the rate at which the prey biomass, additional food biomass gets converted into predator biomass respectively.

Substituting (\ref{4midg}) and (\ref{4midh}) in the above equations, we get 
\begin{equation*}
	\begin{split}
		\frac{\mathrm{d}N}{\mathrm{d}T} & = r N \left(1-\frac{N}{K}\right) -  \frac{\frac{1}{h_N} N}{(bN^2+1)(\frac{1}{e_N h_N}+\frac{h_A e_A}{h_N e_N}A+\frac{h_P e_P}{h_N e_N} P)+N} P, \\
		\frac{\mathrm{d}P}{\mathrm{d}T} & = \Bigg( \epsilon_N  \frac{\frac{1}{h_N} N}{(bN^2+1)(\frac{1}{e_N h_N}+\frac{h_A e_A}{h_N e_N}A+\frac{h_P e_P}{h_N e_N} P)+N} \\
		& \ \  + \epsilon_A \frac{\frac{e_A}{e_N}\frac{1}{h_N} A (b N^2 + 1)}{(bN^2+1)(\frac{1}{e_N h_N}+\frac{h_A e_A}{h_N e_N}A+\frac{h_P e_P}{h_N e_N} P)+N} \Bigg) P - m_1 P. \\
	\end{split}
\end{equation*}

Here, let $c = \frac{1}{h_N}$ and $a = \frac{1}{h_N e_N}$ stands for the maximum rate of predation and half-saturation value of the predator respectively. 

The term $\alpha = \frac{\epsilon_N/h_N}{\epsilon_A/h_A}$, which is the ratio between the maximum growth rates of the predator when it consumes the prey and additional food respectively, indicates the relative efficiency of the predator to convert either of the available food into predator biomass. The value $\alpha$ can be seen to be an equivalent of \textbf{quality} of additional food. Let $\eta = \frac{e_A \epsilon_A}{e_N \epsilon_N}$ and the term $\eta A$ effectual food level. Here the term $\epsilon_1 = e_P h_P$ represents the strength of mutual interference between predators. So, by substituting $\epsilon_A \frac{e_A}{e_N} = \epsilon_N \eta, \ \frac{h_A e_A}{h_N e_N} = \alpha \eta, \ \delta_1 = \epsilon_N c$ and $\epsilon_1 = h_P e_P$, the model gets transformed to 
\begin{equation}\label{4midtemp}
	\begin{split}
		\frac{\mathrm{d}N}{\mathrm{d}T} & = r N \left(1-\frac{N}{K}\right) -  \frac{cNP}{(bN^2+1)(a+\alpha \eta A+ \epsilon_1 a P)+N}, \\
		\frac{\mathrm{d}P}{\mathrm{d}T} & = \frac{\delta_1 \left(N+\eta A(bN^2+1)\right) P}{(bN^2+1)(a+\alpha \eta A+ \epsilon_1 a P)+N} - m_1 P. \\
	\end{split}
\end{equation}

The biological descriptions of the various parameters involved in the system (\ref{4midtemp}) is described in \autoref{4midparam_tab}.

\begin{table}[bht!]
	\centering
	\begin{tabular}{ccc}
		\hline
		Parameter & Definition & Dimension \\  
		\hline
		T & Time & time\\ 
		N & Prey density & biomass \\
		P & Predator density & biomass \\
		A & Additional food & biomass \\
		r & Prey intrinsic growth rate & time$^{-1}$ \\
		K & Prey carrying capacity & biomass \\
		c & Maximum rate of predation & time$^{-1}$ \\
		$\delta_1$ & Maximum growth rate of predator & time$^{-1}$ \\
		$m_1$ & Predator mortality rate & time$^{-1}$ \\
		b & Group defence in prey & biomass$^{-2}$ \\
		\hline
	\end{tabular}
	\caption{Description of variables and parameters present in the system (\ref{4midtemp})}
	\label{4midparam_tab}
\end{table}

In order to reduce the complexity of the model, we non-dimensionalize the system (\ref{4midtemp}) using the following transformations
$$t=rT,\ N=ax, \  P=\frac{ary}{c}. $$

Accordingly, system (\ref{4midtemp}) gets reduced to the following non-dimensionalised systems respectively :

\begin{equation} \label{4mid}
	\begin{split}
		\frac{\mathrm{d} x}{\mathrm{d} t} & = x \left(1-\frac{x}{\gamma} \right)- \frac{xy}{(\omega x^2 + 1) (1+\alpha \xi+\epsilon y)+x}, \\
		\frac{\mathrm{d} y}{\mathrm{d} t} & = \frac{\delta \left(x + \xi (\omega x^2 + 1) \right) y}{(\omega x^2 + 1) (1+\alpha \xi+\epsilon y)+x} - m y. \\
	\end{split}
\end{equation}
where $$\gamma = \frac{K}{a}, \ \xi = \frac{\eta A}{a}, \  \epsilon = \frac{\epsilon_1 a r}{c}, \ \delta = \frac{\delta_1}{m},\ m_1 = r m, \ \omega = b a^2.$$

Here the term $\frac{\eta A}{N}$ denotes the quantity of additional food perceptible to the predator with respect to the prey relative to the nutritional value of prey to the additional food. Hence the term $\xi = \frac{\eta A}{a^2}$ can be seen to be an equivalent of {\textit {\bf{quantity}}} of additional food.

\section{Positivity and boundedness of the solution} \label{sec:4midposi}

\subsection{Positivity of the solution}

In this section, we demonstrate that the positive $xy$-quadrant is an invariant region for the system (\ref{4mid}). Specifically, this means that if the initial populations of both prey and predator start in the positive $xy$-quadrant (i.e., $x(0) > 0$ and $y(0) > 0$), they will remain within this quadrant for all future times.

If prey population goes to zero (i.e., $x(t)=0$), then it is observed from the model equations (\ref{4mid}) that $\frac{\mathrm{d} x}{\mathrm{d} t} = 0.$ This means that the prey population is constant (remains at zero) and cannot be negative. This holds even for the case when predator population goes to zero (i.e., $y=0$). Notably, $x=0$ and $y=0$ serve as invariant manifolds, with $\frac{\mathrm{d} x}{\mathrm{d} t} \Big|_{x=0} = 0$ and $\frac{\mathrm{d} y}{\mathrm{d} t} \Big|_{y=0} = 0$. Therefore, if a solution initiates within the confines of the positive $xy$-quadrant, it will either remains positive or stays at zero eternally (i.e., $x(t) \geq 0$ and $y(t) \geq 0 \ \forall t>0$ if $x(0)>0$ and $y(0)>0$). 

\subsection{Boundedness of the solution}

\begin{thm}
	Every solution of the system (\ref{4mid}) that starts within the positive quadrant of the state space remains bounded. \label{4midbound}
\end{thm}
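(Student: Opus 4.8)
The plan is to bound the prey first and then use an auxiliary linear combination to control the predator. The positivity argument just established guarantees $x(t)\ge 0$ and $y(t)\ge 0$ for all $t>0$, so in particular the predation term subtracted in the prey equation is non-negative. Dropping it gives the differential inequality
$$\frac{\mathrm{d}x}{\mathrm{d}t} \le x\left(1-\frac{x}{\gamma}\right).$$
Comparison with the logistic equation then yields $x(t)\le\max\{x(0),\gamma\}$ for all $t\ge 0$ and $\limsup_{t\to\infty}x(t)\le\gamma$. This disposes of the prey component, and the real work is in bounding the predator.

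For the predator I would introduce $W = \delta x + y$ and compute $\dot W$ along trajectories of (\ref{4mid}). Writing $D = (\omega x^2+1)(1+\alpha\xi+\epsilon y)+x$ for the common denominator, the two functional-response contributions combine with a convenient cancellation: the term $-\delta xy/D$ arising from $\delta\dot x$ cancels the $x$-part of the predator numerator $\delta(x+\xi(\omega x^2+1))y/D$, leaving
$$\frac{\mathrm{d}W}{\mathrm{d}t} = \delta x\left(1-\frac{x}{\gamma}\right) + \frac{\delta\xi(\omega x^2+1)y}{D} - m y.$$
Taming the residual additional-food/interference term is the crux, and the step I expect to be the main obstacle, since it is where the specific structure of $D$ must be exploited to control the otherwise awkward quadratic growth $\omega x^2$. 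The idea is to discard the non-negative quantities $x$ and $(\omega x^2+1)(1+\alpha\xi)$ from $D$, retaining $D \ge (\omega x^2+1)\epsilon y$, which gives the state-independent bound
$$\frac{\delta\xi(\omega x^2+1)y}{D} \le \frac{\delta\xi}{\epsilon}.$$

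With this bound in hand, adding $mW$ to both sides yields
$$\frac{\mathrm{d}W}{\mathrm{d}t} + mW \le \delta x\left(1+m-\frac{x}{\gamma}\right) + \frac{\delta\xi}{\epsilon}.$$
The right-hand side is a downward parabola in $x$, hence bounded above by the constant $M := \frac{\delta\gamma(1+m)^2}{4} + \frac{\delta\xi}{\epsilon}$. A standard comparison (differential-inequality) argument then gives
$$W(t) \le \frac{M}{m} + \left(W(0)-\frac{M}{m}\right)e^{-mt} \le \max\left\{W(0),\frac{M}{m}\right\},$$
so that $\limsup_{t\to\infty}W(t)\le M/m$. Since $0 \le y \le W = \delta x + y$, the predator density is bounded as well, completing the argument. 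The only point requiring care is to verify that every term discarded from the denominator is genuinely non-negative on the invariant region, which is immediate from the positivity of $x$, $y$ and of the parameters $\alpha,\xi,\omega,\epsilon$.
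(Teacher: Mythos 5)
Your proof is correct and follows essentially the same route as the paper: your $W=\delta x+y$ is just $\delta$ times the paper's $W=x+\tfrac{1}{\delta}y$, you exploit the same cancellation of the predation term, the same lower bound $D\ge(\omega x^2+1)\epsilon y$ to get the $\xi/\epsilon$ estimate, and the same linear differential inequality (the paper takes $K<m$ where you take $K=m$, an immaterial difference). The preliminary logistic bound on $x$ is harmless but not needed, since the parabola estimate already handles the prey terms.
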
 

\begin{proof}
	We define $W = x + \frac{1}{\delta}y$. Now, for any $K > 0$, we consider,
	\begin{equation*}
		\begin{split}
			\frac{\mathrm{d} W}{\mathrm{d} t} + K W = & \frac{\mathrm{d} x}{\mathrm{d} t} + \frac{1}{\delta} \frac{\mathrm{d} y}{\mathrm{d} t} + K x + \frac{K}{\delta} y \\
			=  & x \left(1-\frac{x}{\gamma} \right)- \frac{xy}{x + (\omega x^2 + 1)(1+\alpha \xi+\epsilon y)} \\ 
			& + \frac{1}{\delta} \left( \delta \left( \frac{x + \xi (\omega x^2 + 1)}{x + (\omega x^2 + 1)(1 + \alpha \xi + \epsilon y)} \right) y - m y \right) + K x + \frac{K}{\delta} y \\
			=  & x -\frac{x^2}{\gamma} - \frac{xy}{x + (\omega x^2 + 1)(1+\alpha \xi+\epsilon y)} \\ 
			& +  \left( \frac{x + \xi (\omega x^2 + 1)}{x + (\omega x^2 + 1)(1 + \alpha \xi + \epsilon y)} \right) y - \frac{m}{\delta} y + K x + \frac{K}{\delta} y \\
			=  & (1+K) x -\frac{x^2}{\gamma} + \frac{\xi (\omega x^2 + 1) y}{x + (\omega x^2 + 1)(1 + \alpha \xi + \epsilon y)} + \frac{K-m}{\delta} y. \\
			\text{Since $x \geq 0$,} & \\
			\leq  & (1+K) x - \frac{x^2}{\gamma} + \frac{\xi (\omega x^2 + 1) y}{(\omega x^2 + 1)(1 + \alpha \xi + \epsilon y)} + \frac{K-m}{\delta} y \\
			=  & (1+K) x - \frac{x^2}{\gamma} +  \frac{\xi y}{1 + \alpha \xi + \epsilon y} + \frac{K-m}{\delta} y \\
			\leq  & \frac{\gamma (1+K)^2}{4} +  \frac{\xi}{\epsilon } + \frac{K-m}{\delta} y. \\
		\end{split}
	\end{equation*}
	
	For sufficiently small $K(<m)$, we get
	$$\frac{\mathrm{d} W}{\mathrm{d} t} + K W \leq M \left(= \frac{\gamma (1+K)^2}{4} +  \frac{\xi}{\epsilon} \right).$$
	
	Using Gronwall's inequality \cite{howard1998gronwall}, we now find an upper bound on $W(t)$. 
	
	This inequality is in the standard linear first-order form, and we solve it by multiplying both sides by an integrating factor $e^{Kt}$. This simplify the above inequality:
	$$\frac{\mathrm{d}}{\mathrm{d} t} (W(t) e^{Kt}) \leq M e^{Kt}.$$
	
	Now, integrating both sides from $0$ to $t$, we get
	
	$$0 \leq W(t) \leq \frac{M}{K} \left(1 - e^{-Kt}\right) + W(0)\  e^{-Kt}.$$
	
	Therefore, $0 < W(t) \leq \frac{M}{K}$ as $t \rightarrow \infty$. This demonstrates that the solutions of system (\ref{4mid}) are ultimately bounded, thereby proving Theorem \autoref{4midbound}.
\end{proof}

The \text{Picard-Lindel\"of theorem} guarentees the existence of a unique solution that exists locally in time for the system (\ref{4mid}), given any initial conditions $x(0) = x_0 > 0$ and $y(0) = y_0 > 0$. This happens because the RHS terms in (\ref{4mid}) are continuous and locally Lipschitz. Since the solution does not blow up in finite time (i.e., that the solution exists for all $t \geq 0$), global existence is also guaranteed.

\section{Existence of Equilibria} \label{sec:4midequil}

In this section, we investigate the existence of various equilibria that system (\ref{4mid}) admits and study their stability nature. We first discuss the nature of nullclines of the considered system and the asymptotic behavior of its trajectories. We consider the biologically feasible parametric constraint $\delta > m$. 

The prey nullclines of the system (\ref{4mid}) are given by 
$$ x = 0 , \  \ 1- \frac{x}{\gamma}  - \frac{y}{(\omega x^2 + 1)(1+\alpha \xi+\epsilon y) + x} = 0. $$

The predator nullclines of the system (\ref{4mid}) are given by
$$ y = 0, \ \ \frac{\delta \left(x + \xi (\omega x^2 + 1)\right)}{(\omega x^2 + 1)(1 + \alpha \xi + \epsilon y) + x} - m = 0. $$

Upon simplification, the non trivial prey nullcline is given as
\begin{equation} \label{4midprey}
	y = \frac{\left(1-\frac{x}{\gamma}\right)\left[(\omega x^2 + 1)(1+\alpha \xi) + x\right]}{1 - \epsilon \left(1-\frac{x}{\gamma}\right) (\omega x^2 + 1) }.
\end{equation}

In the absence of mutual interference (i.e., $\epsilon = 0$), it is a cubic equation that is positive when $x \in [0,\gamma)$ and is negative when $x \in (\gamma,\infty)$. When mutual interference is incorporated, this nullcline is no more a smooth curve. It passes through the point $(\gamma, 0)$. It also passes through $(0,\frac{1+\alpha \xi}{1-\epsilon})$ and touches the positive $y$-plane if $0 \leq \epsilon < 1$. 

This prey nullcline goes to infinity when the denominator goes to zero. Let 
\begin{equation} \label{eps1eqn}
	f(x) = \frac{\epsilon \omega}{\gamma} x^3 - \epsilon \omega x^2 + \frac{\epsilon}{\gamma} x + 1 - \epsilon.
\end{equation}

The first and second derivatives are given by:
\begin{eqnarray*}
	f'(x) &=& \frac{3\epsilon \omega}{\gamma} x^2 - 2 \epsilon \omega x + \frac{\epsilon}{\gamma}. \\
	f''(x) &=& \frac{6\epsilon \omega}{\gamma} x - 2 \epsilon \omega.
\end{eqnarray*}

At $x=0$, we have $f(0) = 1 - \epsilon,\ f'(0) = \frac{\epsilon}{\gamma} > 0,\ f''(0) = - 2 \epsilon \omega < 0$. 

At $x=\gamma$, we have $f(\gamma) = 1 >0,\ f'(\gamma) = \epsilon \omega \gamma + \frac{\epsilon}{\gamma} > 0,\ f''(\gamma) = 4 \epsilon\omega>0$.

Also at $x = \frac{\gamma}{3}$, we have $f''(x) = 0$. Therefore, $x = \frac{\gamma}{3}$ is the inflection point of $f(x)$. 

We now have the following cases describing the existence of asymptotes for prey nullcline in the interval $(0,\gamma)$.

\begin{itemize}
	\item \textit{Case I - $0 <\epsilon<1$}: In this case, $0 < f(0) = 1 - \epsilon < 1 = f(\gamma)$. Therefore, prey nullcline has $0$ or $2$ roots in the interval $[0,\gamma]$. Hence when $0 \leq \epsilon < 1$, the prey nullcline goes to $\infty$ either twice (say, case A) or none. When prey nullcline never goes to $\infty$, it can monotonically decrease from $(0,\frac{1+\alpha \xi}{1-\epsilon})$ to $(\gamma, 0)$ (say, case B) or exhibit crest and trough between these two points (say, case C). 
	\item \textit{Case 2 - $\epsilon = 1$}: When $\epsilon = 1$, the roots of $f(x)$ are $0, \frac{\gamma \pm \sqrt{\gamma^2 - \frac{4}{\omega}}}{2}$. When $\gamma^2 \omega < 4$, the nullcline goes to $\infty$ only near $0$ (say, case D). Else, $f(x)$ has three roots and the prey nullcline goes to $\infty$ at three values in the range $[0,\gamma]$ (say, case E).
	\item \textit{Case 3 - $\epsilon > 1$}: When $\epsilon > 1$, we have $f(0) =  \frac{\gamma (1 - \epsilon)}{\epsilon \omega} < 0$ and $f(\gamma) = \frac{\gamma}{\epsilon \omega} > 0$. This implies that it has 1 or 3 roots in the interval $[0,\gamma]$. We also have $f'(0) > 0$, $f'(\gamma) > 0$ and $f'(x) = 0$ when $x = \frac{\gamma \pm \sqrt{\gamma^2 - \frac{3}{\omega}}}{3}$. Note that both the extremes are in the range $[0,\gamma]$.  
	
	\begin{figure}[ht]
		\centering
		\includegraphics[width=\textwidth]{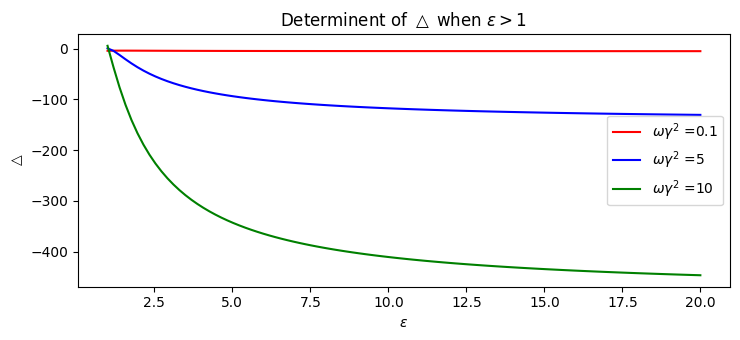}
		\caption{The determinent of the cubic equation (\ref{eps1eqn}) when $\epsilon > 1$. }
		\label{eps1fig4mid}
	\end{figure}
	
	Also, the discriminant of $f(x)$ is given as $\triangle = \frac{1}{\epsilon^2 \omega^3} [(-4 - 8 a - 4 a^2) \epsilon^2 + (36 a + 4 a^2) \epsilon - 27 a]$, where $a = \omega \gamma^2$. Upon numerically simulating the value of $\triangle$ for various values of $a=\omega \gamma^2$, it is observed from figure \ref{eps1fig4mid} that the discriminant is always negative. This tells us that there is only one positive root in the interval $[0,\gamma]$. Hence when $\epsilon > 1$, the prey nullcline goes to $\infty$ only at one point in $[0,\gamma]$ (say, case F).
\end{itemize}

The non trivial predator nullcline is given as
$$\frac{\delta \left( x + \xi (\omega x^2 + 1) \right)}{x + (\omega x^2 + 1)(1 + \alpha \xi + \epsilon y)} - m  = 0.$$ 

In the absence of mutual interference (i.e., $\epsilon = 0$), these nullclines are straight lines parallel to $y$-axis. Upon simplification, the non trivial predator nullcline is
\begin{equation} \label{4midpredator}
	y = \frac{(\delta - m) x + \left(\delta \xi - m (1+\alpha \xi) \right) \left( \omega x^2 + 1 \right)}{m \epsilon (\omega x^2 + 1)}.
\end{equation}

This predator nullcline passes through $(0,\frac{\delta \xi - m (1+\alpha \xi)}{m \epsilon})$. This point will be on the positive-$y$ axis if $\delta \xi - m (1+\alpha \xi) > 0$. 

Also this nullcline touches the $x$-axis at $(\frac{-(\delta - m) \pm \sqrt{\triangle}}{2 \omega (\delta \xi - m(1+\alpha \xi))},0)$ where $\triangle  = (\delta - m)^2 - 4 \omega (\delta \xi - m(1+\alpha \xi))^2$. This point will be on the positive-$x$ axis if $\triangle > 0$ and $\delta \xi - m (1+\alpha \xi) < 0$. These two conditions can be summarised as $\frac{-(\delta - m)}{2 \sqrt{\omega}} < \delta \xi - m (1+\alpha \xi) < 0$. It is also observed that the predator nullcline never reaches $\infty$ in finite time.

The slope of the predator nullcline is given by
$$\frac{\mathrm{d} y}{\mathrm{d} x} =   \left( \frac{\delta - m}{m \epsilon} \right)  \frac{1-\omega x^2}{(1+\omega x^2)^2}.$$

The slope is positive when $x=0$ and  the slope is $0$ when $x = \pm \frac{1}{\sqrt{\omega}}$. 

The second derivative is given by 
$$\frac{\mathrm{d}^2 y}{\mathrm{d} x^2} =   \left( \frac{\delta - m}{m \epsilon} \right) \frac{2 \omega x ( \omega x^2 - 3)}{(1 + \omega x^2)^3}.$$

From the second derivative test, predator nullcline attains maximum at $x=\frac{1}{\sqrt{\omega}}$ and the maximum value is given as $\left( \frac{\delta - m}{m \epsilon} \right) \frac{1}{2\sqrt{\omega}} + \frac{\delta \xi - m (1+\alpha \xi)}{m \epsilon}  > 0$ i.e., $\delta \xi - m (1+\alpha \xi) > \frac{-(\delta - m)}{2 \sqrt{\omega}}$.

Hence we have the sufficient condition for the existence of predator nullcline in the first quadrant is 
\begin{equation} \label{condition14mid}
	\delta \xi - m (1+\alpha \xi) > \frac{-(\delta - m)}{2 \sqrt{\omega}}.
\end{equation}

The qualitative behavior of the predator nullcline can be understood in two scenarios:
\begin{itemize}
	\item \textbf{Case 1 $\left( \delta \xi - m (1+\alpha \xi) > 0 \right)$:} Predator nullcline touches only positive $y$-axis and not pass through positive $x$-axis. 
	\item \textbf{Case 2 $\left( 0 > \delta \xi - m (1+\alpha \xi) > \frac{-(\delta - m)}{2 \sqrt{\omega}} \right)$: } Predator nullcline touches only positive $x$-axis and not pass through positive $y$-axis. 
\end{itemize}

The possible configurations for the A-F cases of prey nullcline with case 1 and case 2 of predator nullclines are presented in \autoref{intereqcase14mid} and \autoref{intereqcase24mid} respectively. In these figures, the solid blue and green lines are prey nullcline and predator nullclines respectively. From these two figures, it is observed that the interior equillibrium point exists for the system (\ref{4mid}).

\begin{figure}[htbp]
	\centering
	\begin{minipage}{\textwidth}
		\centering
		\includegraphics[width=0.8\textwidth]{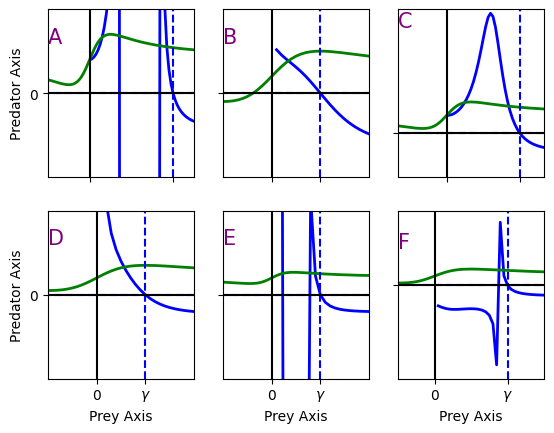}
		\caption{The possible configurations for the prey and predator nullclines of the system (\ref{4mid}) when $\delta \xi - m (1+\alpha \xi) > 0$.}
		\label{intereqcase14mid}
	\end{minipage}
	
	\vspace{1em}
	
	\begin{minipage}{\textwidth}
		\centering
		\includegraphics[width=0.8\textwidth]{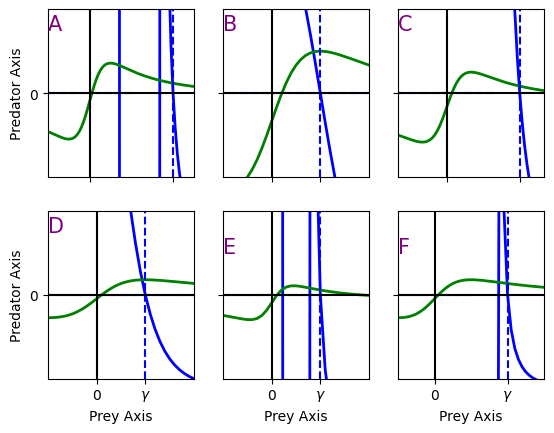}
		\caption{The possible configurations for the prey and predator nullclines of the system (\ref{4mid}) when $0 > \delta \xi - m (1+\alpha \xi) > \frac{-(\delta - m)}{2 \sqrt{\omega}}$.}
		\label{intereqcase24mid}
	\end{minipage}
	
\end{figure}

The intersection of these prey and predator nullclines will result in the following equilibrium points for the system (\ref{4mid}).

\begin{itemize}
	\item Trivial equilibrium $E_0 = (0,0)$. 
	\item Predator free equilibrium $E_1 = (\gamma,0)$.
	\item Pest free equilibrium $E_2 = \left(0,\frac{\delta \xi - m (1+\alpha \xi)}{m \epsilon}\right)$. 
	\item Interior equilibrium $E^* = (x^*,y^*)$.
\end{itemize}

The trivial ($E_0$) and axial equilibria ($E_1$) always exist for the system (\ref{4mid}). Whereas the another axial equilibrium $E_2$ will be in the positive $xy$-quadrant if and only if $\delta \xi - m (1+\alpha \xi) > 0$. Since $\delta > m$, this equilibrium will be in the positive $xy$-quadrant for small $\alpha$ and large $\xi$. In the absence of additional food, this axial equilibrium $E_2 = \left(0,\frac{-1}{\epsilon}\right)$ does not exist in the positive $xy$-quadrant.

The interior equilibrium of the system (\ref{4mid}), if exists, is given by $E^* = (x^*, y^*)$  which is the point of intersection of the non trivial prey nullcline (\ref{4midprey}) and the non trivial predator nullcline (\ref{4midpredator}).

Upon simplification, $E^* = (x^*,y^*)$ is given by
\begin{equation} \label{4midystar}
	y^* = \frac{(\delta - m) x^{*} + \left(\delta \xi - m (1+\alpha \xi) \right) \left( \omega {x^{*}}^2 + 1 \right)}{m \epsilon (\omega {x^{*}}^2 + 1)}.
\end{equation}

and $x^*$ should satisfy the following fifth order equation
\begin{equation} \label{4midxstar}
	\begin{split}
		\left(\frac{\delta \epsilon \xi \omega^2}{\gamma}\right) x^5 	+ \epsilon \omega \left(\frac{\delta}{\gamma} - \xi \omega \left(1 + \delta \right) \right) x^4 + \epsilon \omega \left( \frac{\xi}{\gamma} \left(1 + \delta \right) - \delta \right) x^3 & \\
		+ \left( \frac{\delta \epsilon}{\gamma} + \delta \xi \omega (1 - \epsilon) - m \omega (1 + \alpha \xi)\right) x^2 & \\
		+ \left( \frac{\delta \xi \epsilon }{\gamma} + \delta \left(1 - \epsilon \right) - m \right) x + \left(\delta \xi (1 - \epsilon) - m (1 + \alpha \xi)\right) & = 0.
	\end{split}
\end{equation}

This equation has at most five real roots. However, the Abel–Ruffini theorem states that there is no solution in radicals to general polynomial equations of degree five or higher with arbitrary coefficients. However, the study of nullclines numerically proved the existence of atleast one interior equilibrium point. 

The results obtained so far can be summarized as 

\begin{lem} \label{4midintcond}
	The system (\ref{4mid}) exhibits at least one interior equilibrium point ($E^* = (x^*,y^*)$) which is a solution of the equations (\ref{4midystar}) and (\ref{4midxstar}) and the parameter values satisfying the following constraints
	\begin{enumerate}
		\item $\delta \xi - m (1+\alpha \xi) > \frac{-(\delta - m)}{2 \sqrt{\omega}}$,
		\item $\epsilon (\omega x^2 + 1)\left(1-\frac{x}{\gamma}\right) < 1$.
	\end{enumerate}
\end{lem}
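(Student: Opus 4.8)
The plan is to realise $E^\ast$ as an intersection of the two non-trivial nullclines and to produce it by the intermediate value theorem. Regard both the prey nullcline (\ref{4midprey}) and the predator nullcline (\ref{4midpredator}) as functions of $x$, say $y = P(x)$ and $y = Q(x)$, and set $H(x) = P(x) - Q(x)$. A root $x^\ast \in (0,\gamma)$ of $H$ yields $E^\ast$, with $y^\ast$ read off from (\ref{4midystar}); equivalently, clearing denominators in $H(x)=0$ reproduces the quintic (\ref{4midxstar}), so I only need one genuine sign change of $H$ on $(0,\gamma)$.

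First I would record the structural facts already available. Constraint~2, namely $\epsilon(\omega x^2+1)(1-x/\gamma) < 1$, says exactly that the denominator $1-\epsilon(1-x/\gamma)(\omega x^2+1)$ of (\ref{4midprey}) is positive, so on the region it governs the map $P$ is continuous and non-negative with $P(\gamma)=0$ and $P(0)=\tfrac{1+\alpha\xi}{1-\epsilon}>0$. Constraint~1 is the sufficient condition (\ref{condition14mid}) guaranteeing that $Q$ enters the first quadrant; from the derivative analysis preceding the lemma, $Q$ is continuous, attains its maximum at $x=1/\sqrt{\omega}$, and stays bounded as $x\to\infty$. Thus $H$ is continuous on the relevant interval and it suffices to exhibit two points at which it takes opposite signs.

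Next I would extract a sign change from the endpoint and asymptotic data. The cleanest input is the signed value at $x=0$: in Case~2 of the predator nullcline ($0 > \delta\xi - m(1+\alpha\xi)$) one has $Q(0)<0<P(0)$, hence $H(0)>0$, whereas in the asymptotic configurations A, D, E, F the prey nullcline blows up inside $(0,\gamma)$ while $Q$ remains bounded, forcing $H\to+\infty$ as $x$ approaches the asymptote from the side on which Constraint~2 holds. Against this I would play off the behaviour near $x=\gamma$: wherever $Q(x)>0$ (in particular near its maximum at $x=1/\sqrt{\omega}$, positive by Constraint~1) while $P$ has decayed toward $P(\gamma)=0$, one gets $H<0$, and the IVT then produces $x^\ast\in(0,\gamma)$. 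The coordinate $y^\ast$ from (\ref{4midystar}) is positive precisely because Constraints~1 and~2 place the crossing in the first quadrant.

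The hard part is that Constraints~1 and~2 do not by themselves force the required sign change in every configuration. In the monotone branch (configuration~B) with $Q(0)\ge P(0)$, both natural test values $H(0)$ and $H(\gamma)$ can be non-positive, and the existence of a crossing then hinges on the finer geometry of how the bounded hump of $Q$ sits relative to the decreasing curve $P$ --- information not encoded in the two listed inequalities. This is exactly why I expect the argument to be completed not by a single clean inequality but by the exhaustive enumeration of prey-nullcline shapes A--F against Cases~1 and~2 of the predator nullcline, displayed in Figures \ref{intereqcase14mid} and \ref{intereqcase24mid}, reinforced by the numerical check that the quintic (\ref{4midxstar}) admits a root in $(0,\gamma)$; accordingly the conclusion is phrased as existence of \emph{at least one} interior equilibrium rather than a sharp count.
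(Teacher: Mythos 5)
Your proposal follows essentially the same route as the paper: existence of $E^*$ is read off from the intersection of the non-trivial nullclines, with your Constraint~1 playing the role of (\ref{condition14mid}) (the predator nullcline enters the first quadrant) and Constraint~2 keeping the denominator of (\ref{4midprey}) positive, and the enumeration of prey-nullcline shapes A--F against predator Cases~1--2 doing the remaining work. The gap you flag at the end is real but is present in the paper as well: the paper does not close it analytically either, asserting the intersection only from the configurations displayed in Figures \ref{intereqcase14mid} and \ref{intereqcase24mid} and from the numerical study of the quintic (\ref{4midxstar}), so your IVT-based partial argument is, if anything, more explicit than the paper about exactly which configurations still require the graphical or numerical input.
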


In the absence of mutual interference (i.e., $\epsilon = 0$), $x^*$ is a solution of the quadratic equation $\omega (\delta \xi - m (1 + \alpha \xi)) x^2 + (\delta - m) x + \delta \xi - m (1 + \alpha \xi)  = 0$ and $$y^* = \left(1-\frac{x^*}{\gamma}\right) \left[x^*+(\omega {x^{*}}^2 + 1)(1+\alpha \xi)\right].$$ It also satisfies the condition $\delta \xi - m (1+\alpha \xi) > \frac{-(\delta - m)}{2 \sqrt{\omega}}$.

\section{Stability of Equilibria} \label{sec:4midstab}

In order to obtain the asymptotic behavior of the trajectories of the system (\ref{4mid}), the associated Jacobian matrix is given by 

$$J = \begin{bmatrix}
	\frac{\partial}{\partial x} f(x,y)  & \frac{\partial}{\partial y} f(x,y)\\
	\frac{\partial}{\partial x} g(x,y) & \frac{\partial}{\partial y} g(x,y)
\end{bmatrix},$$
where

\begin{eqnarray*}
	f(x,y) &=& x \left(1-\frac{x}{\gamma} \right)- \frac{xy}{(\omega x^2 + 1)(1+\alpha \xi+\epsilon y)+x}, \\
	g(x,y) &=& \frac{\delta \left( x + \xi (\omega x^2 + 1)\right) y}{(\omega x^2 + 1)(1+\alpha \xi+\epsilon y)+x} - m y,
\end{eqnarray*}
and 
\begin{eqnarray*}
	\frac{\partial}{\partial x} f(x,y) &=& 1 - \frac{2 x}{\gamma} - \frac{y (1 - \omega x^2) (1 + \alpha \xi + \epsilon y)}{\left((\omega x^2 + 1)(1+\alpha \xi+\epsilon y)+x\right)^2}. \\
	\frac{\partial}{\partial y} f(x,y) &=& -\frac{x ((\omega x^2 + 1) (1+ \alpha \xi) + x)}{\left((\omega x^2 + 1)(1+\alpha \xi+\epsilon y)+x\right)^2}. \\
	\frac{\partial}{\partial x} g(x,y) &=& \frac{ \delta  y (1+(\alpha - 1) \xi + \epsilon y) (1 - \omega x^2)}{\left((\omega x^2 + 1)(1+\alpha \xi+\epsilon y)+x\right)^2}. \\
	\frac{\partial}{\partial y} g(x,y) &=& \frac{\delta (x + \xi (\omega x^2 + 1)) ((\omega x^2 + 1) (1 + \alpha \xi) + x)}{\left((\omega x^2 + 1)(1+\alpha \xi+\epsilon y)+x\right)^2} - m.
\end{eqnarray*}

At the trivial equilibrium point $E_0 = (0,0)$, we obtain the jacobian as 

\begin{equation*}
	J\left( E_0 \right) = \begin{bmatrix}
		1  & 0 \\
		0 & \frac{\delta \xi - m (1+ \alpha \xi)}{1+\alpha \xi}
	\end{bmatrix}.
\end{equation*}

The eigenvalues of this jacobian matrix are $1, \frac{\delta \xi - m (1+ \alpha \xi)}{1+\alpha \xi}$. If $\delta \xi - m (1+ \alpha \xi) > 0$, then both the eigen values have same signs. This makes the equilibrium point $E_0 = (0,0)$ unstable. If $\delta \xi - m (1+ \alpha \xi) < 0$, then both the eigen values will have opposite signs. This makes the point $E_0 = (0,0)$ a saddle point. In the absence of additional food, $E_0 = (0,0)$ is a saddle point. 

At the axial equilibrium point $E_1 = (\gamma ,0)$, we obtain the jacobian as 

\begin{equation*}
	J(E_1) = \begin{bmatrix}
		-1  & \frac{-\gamma}{(\omega \gamma^2 + 1) (1+ \alpha \xi)+\gamma} \\
		0 & \frac{(\delta - m) \gamma + (\delta \xi - m (1+\alpha \xi))(\omega \gamma^2 + 1)}{(\omega \gamma^2 + 1) (1+ \alpha \xi)+\gamma}
	\end{bmatrix}.
\end{equation*}

The eigenvalues of this jacobian matrix are $$-1, \ \frac{(\delta - m) \gamma + (\delta \xi - m (1+\alpha \xi))(\omega \gamma^2 + 1)}{(\omega \gamma^2 + 1) (1+ \alpha \xi)+\gamma}.$$

If $(\delta - m) \gamma + (\delta \xi - m (1+\alpha \xi))(\omega \gamma^2 + 1) > 0$, then both the eigenvalues will have opposite sign resulting in a saddle point. If $(\delta - m) \gamma + (\delta \xi - m (1+\alpha \xi))(\omega \gamma^2 + 1) < 0$, then it will be an asymptotically stable node. In the absence of additional food,  $E_1 = (\gamma, 0)$ is a saddle point if $\frac{\delta}{m} > 1 + \frac{\omega \gamma^2 + 1}{\gamma}$. Else it is a stable equilibrium point. 

We now consider another axial equilibrium point which exists only for the additional food provided system (\ref{4mid}). At this axial equilibrium point $E_2 = \left(0, \frac{\delta \xi - m (1+\alpha \xi)}{m \epsilon}\right)$, the associated jacobian matrix is given as 

\begin{equation*}
	J(E_2) = \begin{bmatrix}
		1-\frac{\delta \xi - m (1+\alpha \xi)}{\delta \xi \epsilon}  & 0 \\
		\frac{(\delta - m) (\delta \xi - m (1+\alpha \xi))}{\delta \xi \epsilon} & -\frac{m}{\delta \xi} (\delta \xi - m (1+\alpha \xi))
	\end{bmatrix}.
\end{equation*}

The eigen values for this jacobian matrix are $1-\frac{\delta \xi - m (1+\alpha \xi)}{\delta \xi \epsilon}$ and $-\frac{m}{\delta \xi} (\delta \xi - m (1+\alpha \xi))$. Since this equilibrium point exists in positive $xy$-quadrant only when $\delta \xi - m (1+\alpha \xi) > 0$, the second eigen value of the associated jacobian matrix is always negative. Therefore, the two eigen values will have same negative sign and result in a stable equilibrium point when $\delta \xi - m (1+\alpha \xi) > \delta \xi \epsilon > 0$. If $\delta \xi \epsilon > \delta \xi - m (1+\alpha \xi) > 0$, then eigenvalues are of opposite sign resulting in a saddle equilibrium point. 

The following lemmas present the stability nature of the trivial and axial equilibria. 

\begin{lem}
	The trivial equilibrium $E_0 = (0,0)$ is saddle (unstable node) if 
	\begin{equation*}
		\delta \xi - m (1+\alpha \xi) < (>)\  0.
	\end{equation*}
\end{lem}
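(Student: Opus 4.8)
The plan is to carry out a standard linear-stability analysis by evaluating the Jacobian $J$ (already assembled above) at the point $E_0 = (0,0)$ and reading off the signs of its eigenvalues. First I would substitute $x = 0$ and $y = 0$ into the four partial-derivative expressions. The key simplification is that the common denominator $\left((\omega x^2+1)(1+\alpha\xi+\epsilon y)+x\right)^2$ reduces to $(1+\alpha\xi)^2$ at the origin, while both off-diagonal entries carry an explicit factor of $x$ or of $y$ in their numerators, so that $\frac{\partial}{\partial y} f$ and $\frac{\partial}{\partial x} g$ vanish at $E_0$. This leaves the diagonal matrix displayed in the text,
\begin{equation*}
J(E_0) = \begin{bmatrix} 1 & 0 \\ 0 & \dfrac{\delta\xi - m(1+\alpha\xi)}{1+\alpha\xi} \end{bmatrix},
\end{equation*}
whose eigenvalues are simply the diagonal entries $\lambda_1 = 1$ and $\lambda_2 = \dfrac{\delta\xi - m(1+\alpha\xi)}{1+\alpha\xi}$.

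Next I would note that $\lambda_1 = 1 > 0$ unconditionally, so the classification is governed entirely by the sign of $\lambda_2$. Since $1+\alpha\xi > 0$ for all admissible (nonnegative) parameter values, the sign of $\lambda_2$ coincides with the sign of its numerator $\delta\xi - m(1+\alpha\xi)$. I would then split into two cases. If $\delta\xi - m(1+\alpha\xi) > 0$, then $\lambda_2 > 0$ and both eigenvalues are positive, so $E_0$ is an unstable (source) node. If $\delta\xi - m(1+\alpha\xi) < 0$, then $\lambda_2 < 0$ while $\lambda_1 > 0$, giving eigenvalues of opposite sign and hence a saddle. This is exactly the dichotomy asserted by the lemma, under the convention that the parenthetical inequality matches the parenthetical classification.

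There is no substantive obstacle here: the argument is a direct consequence of the linearization theorem for hyperbolic equilibria, since both eigenvalues are nonzero in each of the two cases. The only point demanding a moment's care is the vanishing of the off-diagonal entries — one must confirm that the factor of $x$ in $\frac{\partial}{\partial y} f$ and the factor of $y$ in $\frac{\partial}{\partial x} g$ genuinely annihilate those terms at the origin, so that the eigenvalues can be read directly off the diagonal rather than extracted from the characteristic polynomial. Once that is verified, the sign analysis above completes the proof.
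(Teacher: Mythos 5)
Your proposal is correct and follows exactly the same route as the paper: evaluate the Jacobian at the origin, observe that the off-diagonal entries vanish (because of the explicit factors of $x$ and $y$), read off the eigenvalues $1$ and $\frac{\delta\xi - m(1+\alpha\xi)}{1+\alpha\xi}$, and classify $E_0$ by the sign of the second eigenvalue. No gaps; your added remark that $1+\alpha\xi>0$ makes the sign of $\lambda_2$ that of its numerator is a small but welcome point of care that the paper leaves implicit.
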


\begin{lem}
	The predator-free axial equilibrium $E_1 = (\gamma,0)$ is stable node (saddle) if 
	\begin{equation*}
		\delta \xi - m (1+\alpha \xi) < (>)\  \frac{-(\delta - m) \gamma}{\omega \gamma^2 + 1}.
	\end{equation*}
\end{lem}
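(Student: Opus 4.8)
The plan is to read the stability of $E_1$ directly off the Jacobian $J(E_1)$ computed in the preceding text, so no new differentiation is needed. The crucial structural observation is that $J(E_1)$ is upper triangular: its $(2,1)$ entry $\partial g/\partial x$ carries an overall factor of $y$ and therefore vanishes along the predator-free manifold $y=0$ on which $E_1$ sits. Consequently the two eigenvalues are exactly the diagonal entries,
\[
\lambda_1 = -1, \qquad \lambda_2 = \frac{(\delta - m)\gamma + \left(\delta\xi - m(1+\alpha\xi)\right)(\omega\gamma^2 + 1)}{(\omega\gamma^2 + 1)(1+\alpha\xi)+\gamma}.
\]
Both are manifestly real, so $E_1$ can only be a node or a saddle, never a spiral, and the entire classification collapses to determining the sign of $\lambda_2$.

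First I would record that the denominator of $\lambda_2$ is strictly positive: under the standing assumptions $\gamma>0$, $\omega\gamma^2+1>0$ and $1+\alpha\xi\ge 1$, so every summand is nonnegative and the whole expression exceeds $\gamma>0$. Hence $\operatorname{sign}(\lambda_2)=\operatorname{sign}(N)$, where $N := (\delta-m)\gamma + (\delta\xi - m(1+\alpha\xi))(\omega\gamma^2+1)$ is the numerator. This reduces the problem to a sign condition on a single affine-in-$(\delta\xi - m(1+\alpha\xi))$ quantity.

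Next I would perform the one algebraic rearrangement that turns the sign of $N$ into the threshold of the statement. Dividing $N<0$ by the positive factor $\omega\gamma^2+1$ isolates the additional-food term and yields
\[
\delta\xi - m(1+\alpha\xi) < \frac{-(\delta-m)\gamma}{\omega\gamma^2+1}.
\]
When this holds, $\lambda_2<0$; together with $\lambda_1=-1<0$ both eigenvalues are real and negative, so $E_1$ is an asymptotically stable node. Reversing the inequality gives $N>0$, hence $\lambda_2>0>\lambda_1$, eigenvalues of opposite sign, and $E_1$ is a saddle. This establishes both clauses.

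Since every step is an eigenvalue read-off from a triangular matrix followed by division by a positive number, there is no genuine analytic obstacle here. The only point demanding care is the bookkeeping at the boundary $N=0$, where $\lambda_2=0$ and $E_1$ becomes non-hyperbolic, making linearization inconclusive; the strict inequalities in the lemma deliberately exclude this degenerate case, which is precisely the transversality/bifurcation threshold treated later in Section~\ref{sec:4midbifur}. I would flag, but not resolve, that borderline here.
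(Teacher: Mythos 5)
Your proposal is correct and follows essentially the same route as the paper: read off the two real eigenvalues $-1$ and $\frac{(\delta-m)\gamma+(\delta\xi-m(1+\alpha\xi))(\omega\gamma^2+1)}{(\omega\gamma^2+1)(1+\alpha\xi)+\gamma}$ from the triangular Jacobian $J(E_1)$, note the denominator is positive, and classify by the sign of the numerator. Your explicit remarks on the triangular structure and the non-hyperbolic boundary case are slightly more careful than the paper's treatment but do not constitute a different argument.
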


\begin{lem}
	The axial equilibrium $E_2 = \left(0, \frac{\delta \xi - m (1+\alpha \xi)}{m \epsilon}\right) $ exists in positive $xy$-quadrant and is stable node (saddle) if 
	\begin{equation*}
		\delta \xi - m (1+\alpha \xi) >\  0 \text{ and }\delta \xi - m (1+\alpha \xi) > (<) \delta \epsilon \xi.
	\end{equation*}
\end{lem}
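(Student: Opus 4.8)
The plan is to read off the stability directly from the Jacobian $J(E_2)$ already displayed above, so that the argument reduces to a sign analysis of its eigenvalues. First I would settle existence: the prey coordinate of $E_2$ is $0$, and its predator coordinate $\frac{\delta \xi - m(1+\alpha\xi)}{m\epsilon}$ is strictly positive precisely when $\delta\xi - m(1+\alpha\xi) > 0$, since $m\epsilon > 0$. This gives the first condition in the statement and fixes the hypothesis under which the remaining analysis is carried out.

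Next I would exploit the structure of $J(E_2)$. Because its $(1,2)$ entry vanishes, the matrix is lower triangular, so its eigenvalues are exactly the diagonal entries
$$\lambda_1 = 1 - \frac{\delta\xi - m(1+\alpha\xi)}{\delta\xi\epsilon}, \qquad \lambda_2 = -\frac{m}{\delta\xi}\bigl(\delta\xi - m(1+\alpha\xi)\bigr).$$
No characteristic-polynomial computation is needed; the off-diagonal term $\frac{(\delta-m)(\delta\xi - m(1+\alpha\xi))}{\delta\xi\epsilon}$ influences only the eigenvectors, not the eigenvalues, and can be disregarded for the stability classification.

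Then I would perform the sign bookkeeping. Under the existence condition $\delta\xi - m(1+\alpha\xi) > 0$ together with the standing assumption $\delta > m$, the prefactor $\frac{m}{\delta\xi}$ is positive and the bracketed quantity is positive, so $\lambda_2 < 0$ unconditionally. The sign of $\lambda_1$ is therefore the only free quantity: $\lambda_1 < 0$ is equivalent to $\frac{\delta\xi - m(1+\alpha\xi)}{\delta\xi\epsilon} > 1$, i.e. $\delta\xi - m(1+\alpha\xi) > \delta\epsilon\xi$, whereas $\lambda_1 > 0$ corresponds to the reversed inequality. Hence when $\delta\xi - m(1+\alpha\xi) > \delta\epsilon\xi$ both eigenvalues are negative and $E_2$ is a stable node, while when $\delta\xi - m(1+\alpha\xi) < \delta\epsilon\xi$ the eigenvalues carry opposite signs and $E_2$ is a saddle, which is exactly the claimed dichotomy.

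Since the Jacobian is already supplied in the excerpt, there is no genuine obstacle here; the only step warranting care is confirming that $J(E_2)$ is itself correct, that is, that substituting $x=0$ and $y = \frac{\delta\xi - m(1+\alpha\xi)}{m\epsilon}$ into the four partial derivatives of $f$ and $g$ reproduces the stated entries. The key simplification is that at this point $1 + \alpha\xi + \epsilon y = \frac{\delta\xi}{m}$, so the common denominator $\bigl((\omega x^2+1)(1+\alpha\xi+\epsilon y)+x\bigr)^2$ collapses to $(\delta\xi)^2/m^2$; I would verify this cancellation once (it already reproduces $\lambda_1 = 1 - \frac{my}{\delta\xi}$ from $\partial_x f$), after which the eigenvalue reading and the sign analysis are entirely routine.
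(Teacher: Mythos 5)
Your proposal is correct and follows essentially the same route as the paper: both read the eigenvalues off the lower-triangular Jacobian $J(E_2)$, note that the second eigenvalue is negative whenever the equilibrium exists in the positive quadrant, and reduce the stable-node/saddle dichotomy to the sign of $1 - \frac{\delta\xi - m(1+\alpha\xi)}{\delta\xi\epsilon}$. Your extra check that $1+\alpha\xi+\epsilon y = \frac{\delta\xi}{m}$ at $E_2$ correctly confirms the paper's stated Jacobian entries.
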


\subsection{Stability of Interior Equilibrium}

The interior equilibrium point $E^* = (x^*,y^*)$ is the solution of system of equations  (\ref{4midystar}) and (\ref{4midxstar}) and satisfying the conditions in Lemma \ref{4midintcond}.

At this co existing equilibrium point $E^* = (x^*, y^*)$, we obtain the jacobian as 

$$ J(E^*) = \begin{bmatrix}
	\frac{\partial}{\partial x} f(x,y)  & \frac{\partial}{\partial y} f(x,y) \\
	\frac{\partial}{\partial x} g(x,y) & \frac{\partial}{\partial y} g(x,y)
\end{bmatrix} \Bigg|_{(x^*,y^*)} . $$

The associated characteristic equation is given by

\begin{equation}
	\lambda ^2 - \text{Tr } J \bigg|_{(x^*,y^*)} \lambda + \text{Det } J \bigg|_{(x^*,y^*)} = 0.
\end{equation}

Now

\begin{eqnarray*}
	\text{Det } J \bigg|_{(x^*,y^*)} &=& \frac{\partial}{\partial x} f(x,y) \ .\ \frac{\partial}{\partial y} g(x,y) - \frac{\partial}{\partial y} f(x,y) \ .\ \frac{\partial}{\partial x} g(x,y) \\
	&=& \left( 1 - \frac{2 x}{\gamma} - \frac{y (1 - \omega x^2) (1 + \alpha \xi + \epsilon y)}{\left((\omega x^2 + 1)(1+\alpha \xi+\epsilon y)+x\right)^2} \right) \\
	& & \left( \frac{\delta (x + \xi (\omega x^2 + 1)) ((\omega x^2 + 1) (1 + \alpha \xi) + x)}{\left((\omega x^2 + 1)(1+\alpha \xi+\epsilon y)+x\right)^2} - m \right) \\
	& &-  \left( -\frac{x ((\omega x^2 + 1) (1+ \alpha \xi) + x)}{\left((\omega x^2 + 1)(1+\alpha \xi+\epsilon y)+x\right)^2} \right) \\
	& & \left( \frac{ \delta  y (1+(\alpha - 1) \xi + \epsilon y) (1 - \omega x^2)}{\left((\omega x^2 + 1)(1+\alpha \xi+\epsilon y)+x\right)^2} \right)\bigg|_{(x^*,y^*)} .
\end{eqnarray*}

Upon simplification, we have

\begin{eqnarray*}
	\text{Det } J \bigg|_{(x^*,y^*)} &=& \left( 1 - \frac{2 x}{\gamma} \right) \left( \frac{\delta (x + \xi (\omega x^2 + 1)) ((\omega x^2 + 1) (1 + \alpha \xi) + x)}{\left((\omega x^2 + 1)(1+\alpha \xi+\epsilon y)+x\right)^2} - m \right) \\
	& & + \frac{m y (1 - \omega x^2) (1 + \alpha \xi + \epsilon y)}{\left((\omega x^2 + 1)(1+\alpha \xi+\epsilon y)+x\right)^2} \\
	& & - \frac{\delta \xi y (1 - \omega x^2) \left((\omega x^2 + 1)(1+\alpha \xi)+x\right)}{\left((\omega x^2 + 1)(1+\alpha \xi+\epsilon y)+x\right)^3} \bigg|_{(x^*,y^*)} .
\end{eqnarray*}

From (\ref{4mid}), the following equations satisfy at the interior equilibrium point $E^* = (x^*,y^*)$.
$$ 1-\frac{x}{\gamma} = \frac{y}{(\omega x^2 + 1) (1+\alpha \xi+\epsilon y)+x} \bigg|_{(x^*,y^*)}.$$ and $$ \frac{\delta \left(x + \xi (\omega x^2 + 1) \right)}{(\omega x^2 + 1) (1+\alpha \xi+\epsilon y)+x} = m \bigg|_{(x^*,y^*)}.$$

Substituting these two equations in the definition of determinent, we have

\begin{eqnarray*}
	\text{Det } J \bigg|_{(x^*,y^*)} &=& \left( 1 - \frac{2 x}{\gamma} \right) \left( \frac{m ((\omega x^2 + 1) (1 + \alpha \xi) + x)}{(\omega x^2 + 1)(1+\alpha \xi+\epsilon y)+x} - m \right) \\
	& & + \frac{m \left(1 - \frac{x}{\gamma}\right) (1 - \omega x^2) (1 + \alpha \xi + \epsilon y)}{(\omega x^2 + 1)(1+\alpha \xi+\epsilon y)+x} \\
	& & - \frac{\delta \xi \left(1 - \frac{x}{\gamma}\right) (1 - \omega x^2) \left((\omega x^2 + 1)(1+\alpha \xi)+x\right)}{\left((\omega x^2 + 1)(1+\alpha \xi+\epsilon y)+x\right)^2} \bigg|_{(x^*,y^*)} .
\end{eqnarray*}

Upon simplification, we have

\begin{eqnarray*}
	\text{Det } J \bigg|_{(x^*,y^*)} &=&  - \frac{m \epsilon y ((\omega x^2 + 1) \left( 1 - \frac{2 x}{\gamma} \right)}{(\omega x^2 + 1)(1+\alpha \xi+\epsilon y)+x} + \frac{m \left(1 - \frac{x}{\gamma}\right) (1 - \omega x^2) (1 + \alpha \xi + \epsilon y)}{(\omega x^2 + 1)(1+\alpha \xi+\epsilon y)+x} \\
	& & - \frac{\delta \xi \left(1 - \frac{x}{\gamma}\right) (1 - \omega x^2) \left((\omega x^2 + 1)(1+\alpha \xi)+x\right)}{\left((\omega x^2 + 1)(1+\alpha \xi+\epsilon y)+x\right)^2} \bigg|_{(x^*,y^*)} .
\end{eqnarray*}

This further simplifies to 

\begin{eqnarray*}
	\text{Det } J \bigg|_{(x^*,y^*)} &=&  \frac{m \left(1 - \frac{x}{\gamma}\right) (1 - \omega x^2) (1 + \alpha \xi)}{(\omega x^2 + 1)(1+\alpha \xi+\epsilon y)+x} + \frac{m \epsilon x y \left( \frac{3 \omega x^2}{\gamma} - 2 \omega x + \frac{1}{\gamma} \right)}{(\omega x^2 + 1)(1+\alpha \xi+\epsilon y)+x} \\
	& & - \frac{\delta \xi \left(1 - \frac{x}{\gamma}\right) (1 - \omega x^2) \left((\omega x^2 + 1)(1+\alpha \xi)+x\right)}{\left((\omega x^2 + 1)(1+\alpha \xi+\epsilon y)+x\right)^2} \bigg|_{(x^*,y^*)} .
\end{eqnarray*}

From (\ref{4midystar}), we have $$ (\omega {x^*}^2 + 1)(1+\alpha \xi+\epsilon y^*)+x^* = \frac{\delta}{m} \left(x^* + \xi (\omega {x^*}^2 + 1) \right).$$ 

This further simplifies the determinent to 

\begin{eqnarray*}
	\text{Det } J \bigg|_{(x^*,y^*)} &=&  \frac{m \left(1 - \frac{x}{\gamma}\right) (1 - \omega x^2) (1 + \alpha \xi)}{(\omega x^2 + 1)(1+\alpha \xi+\epsilon y)+x} + \frac{m \epsilon x y \left( \frac{3 \omega x^2}{\gamma} - 2 \omega x + \frac{1}{\gamma} \right)}{(\omega x^2 + 1)(1+\alpha \xi+\epsilon y)+x} \\
	& & - \frac{m \xi \left(1 - \frac{x}{\gamma}\right) (1 - \omega x^2)}{\left((\omega x^2 + 1)(1+\alpha \xi+\epsilon y)+x\right)} \ \ . \ \frac{(\omega x^2 + 1)(1+\alpha \xi)+x}{(\omega x^2 + 1)\xi+x} \bigg|_{(x^*,y^*)} .
\end{eqnarray*}

Since $x << \omega x^2 + 1$, we approximate the last term of the above equation and simplify to get the following determinent value

\begin{equation} \label{4midintdet}
	\text{Det } J \bigg|_{E^*} =  \frac{m \epsilon x y \left( \frac{3 \omega x^2}{\gamma} - 2 \omega x + \frac{1}{\gamma} \right)}{(\omega x^2 + 1)(1+\alpha \xi+\epsilon y)+x} \bigg|_{(x^*,y^*)}.
\end{equation}

The sign of the determinent depends on the sign of quadratic expression $h(x) = \frac{3 \omega x^2}{\gamma} - 2 \omega x + \frac{1}{\gamma}$. From the second derivative test, this function has only minimum value at $x = \frac{\gamma}{3}$ and the value is $\frac{3 - \omega \gamma^2}{3 \gamma}$. Also $h(0) = \frac{1}{\gamma} > 0$ and $h(\gamma) = \omega \gamma + \frac{1}{\gamma} > 0$. Therefore, if $h(\frac{\gamma}{3}) > 0$, then the determinent is positive in $(0,\gamma)$. Else, it will be positive in the region except the interval of two positive roots of $h(x)$. The results can be summarized as follows

\begin{lem} \label{4midintdetsign}
	The determinent of the jacobian matrix at the interior equilibrium point is positive when either $\omega \gamma^2 \leq 3$ or $$\omega \gamma^2 > 3 \text{ and } x^* \in \left\{ (0, \gamma) - \left(\frac{\gamma}{3} - \frac{1}{3} \sqrt{\gamma^2 - \frac{3}{\omega}}, \frac{\gamma}{3} + \frac{1}{3} \sqrt{\gamma^2 - \frac{3}{\omega}}\right)\right\}. $$
\end{lem}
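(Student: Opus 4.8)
The plan is to read off the sign of the determinant directly from the already-simplified expression (\ref{4midintdet}), by isolating the single factor whose sign can change. At an existing interior equilibrium $E^* = (x^*, y^*)$ we have $x^* > 0$ and $y^* > 0$, and all the parameters $m, \epsilon, \alpha, \xi, \omega, \gamma$ are positive; hence the prefactor $m \epsilon x^* y^*$ is strictly positive and the denominator $(\omega {x^*}^2 + 1)(1 + \alpha \xi + \epsilon y^*) + x^*$ is strictly positive. Consequently the sign of $\text{Det } J|_{E^*}$ coincides with the sign of the quadratic $h(x) = \frac{3\omega x^2}{\gamma} - 2\omega x + \frac{1}{\gamma}$ evaluated at $x = x^*$, so the whole problem reduces to locating where $h$ is positive on the biologically relevant interval $(0,\gamma)$.

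Next I would analyze $h$ as a quadratic in $x$. Its leading coefficient $\frac{3\omega}{\gamma}$ is positive, so $h$ is a convex parabola attaining its unique minimum at the critical point $x = \frac{\gamma}{3}$ (from $h'(x)=0$), with minimum value $h(\frac{\gamma}{3}) = \frac{3 - \omega\gamma^2}{3\gamma}$. I would also record the endpoint values $h(0) = \frac{1}{\gamma} > 0$ and $h(\gamma) = \omega\gamma + \frac{1}{\gamma} > 0$, so that $h$ is positive at both ends of $(0,\gamma)$ irrespective of the parameters; these three facts fully determine the sign pattern of a convex quadratic on the interval.

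The case split then follows from the sign of the minimum. If $\omega\gamma^2 \leq 3$, then $h(\frac{\gamma}{3}) \geq 0$, and since this is the global minimum of a convex function, $h(x) \geq 0$ for all $x$; combined with the strictly positive endpoint values this gives $\text{Det } J|_{E^*} > 0$ throughout $(0,\gamma)$. If instead $\omega\gamma^2 > 3$, the minimum is strictly negative while both endpoints are positive, so $h$ has exactly two real roots in $(0,\gamma)$; solving the quadratic gives these roots as $\frac{\gamma}{3} \pm \frac{1}{3}\sqrt{\gamma^2 - \frac{3}{\omega}}$, where the radical simplification $\frac{\gamma}{3\omega}\sqrt{\omega^2 - 3\omega/\gamma^2} = \frac{1}{3}\sqrt{\gamma^2 - 3/\omega}$ is the one computation that needs care. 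By convexity $h$ is negative strictly between these two roots and positive outside, so the determinant is positive exactly when $x^*$ lies in $(0,\gamma)$ but outside the open interval $(\frac{\gamma}{3} - \frac{1}{3}\sqrt{\gamma^2 - \frac{3}{\omega}}, \frac{\gamma}{3} + \frac{1}{3}\sqrt{\gamma^2 - \frac{3}{\omega}})$, which is exactly the set claimed.

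Because the argument is an elementary sign analysis of a single quadratic, there is no serious obstacle; the only points demanding attention are verifying that the prefactor and denominator in (\ref{4midintdet}) are genuinely positive at an existing interior equilibrium (so that the sign is carried entirely by $h$), and performing the radical simplification of the roots into the stated form. I would also note the borderline case $\omega\gamma^2 = 3$, where the minimum vanishes at the single point $x = \frac{\gamma}{3}$: there the determinant is non-negative rather than strictly positive, a measure-zero caveat that is absorbed into the "$\leq 3$" branch of the statement.
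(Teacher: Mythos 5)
Your proposal is correct and follows essentially the same route as the paper: both reduce the sign of $\text{Det}\,J|_{E^*}$ in (\ref{4midintdet}) to the sign of the convex quadratic $h(x)=\frac{3\omega x^2}{\gamma}-2\omega x+\frac{1}{\gamma}$, use the minimum $h(\gamma/3)=\frac{3-\omega\gamma^2}{3\gamma}$ together with the positive endpoint values $h(0)$ and $h(\gamma)$, and split on the sign of $3-\omega\gamma^2$ to locate the two roots $\frac{\gamma}{3}\pm\frac{1}{3}\sqrt{\gamma^2-\frac{3}{\omega}}$. Your added observations (explicit positivity of the prefactor $m\epsilon x^*y^*$ and the denominator, and the borderline $\omega\gamma^2=3$ case where the determinant is only non-negative at $x^*=\gamma/3$) are correct refinements of details the paper leaves implicit.
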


In the absence of mutual interference (i.e., $\epsilon = 0$), the determinent is positive when $\left(\delta \xi - m(1+\alpha \xi)\right) \left(1 - \omega {x^*}^2\right) < 0$. Else, the determinent will be negative.

The trace of the jacobian matrix is given by 

\begin{eqnarray*}
	\text{Tr } J \bigg|_{(x^*,y^*)} &=& \frac{\partial}{\partial x} f(x,y) + \frac{\partial}{\partial y} g(x,y) \\
	&=& 1 - \frac{2 x}{\gamma} - \frac{y (1 - \omega x^2) (1 + \alpha \xi + \epsilon y)}{\left((\omega x^2 + 1)(1+\alpha \xi+\epsilon y)+x\right)^2} \\
	& & + \frac{\delta (x + \xi (\omega x^2 + 1)) ((\omega x^2 + 1) (1 + \alpha \xi) + x)}{\left((\omega x^2 + 1)(1+\alpha \xi+\epsilon y)+x\right)^2} - m \bigg|_{(x^*,y^*)}. 
\end{eqnarray*}

From (\ref{4mid}), the following equations satisfy at the interior equilibrium point $E^* = (x^*,y^*)$.
$$ 1-\frac{x}{\gamma} = \frac{y}{(\omega x^2 + 1) (1+\alpha \xi+\epsilon y)+x} \bigg|_{(x^*,y^*)}.$$ and $$ \frac{\delta \left(x + \xi (\omega x^2 + 1) \right)}{(\omega x^2 + 1) (1+\alpha \xi+\epsilon y)+x} = m \bigg|_{(x^*,y^*)}.$$

Substituting these two equations in the trace of jacobian, we have

\begin{eqnarray*}
	\text{Tr } J \bigg|_{(x^*,y^*)} &=& 1 - \frac{2 x}{\gamma} - \frac{ \left(1 - \frac{x}{\gamma}\right) (1 - \omega x^2) (1 + \alpha \xi + \epsilon y)}{(\omega x^2 + 1)(1+\alpha \xi+\epsilon y)+x} \\
	& & + \frac{m ((\omega x^2 + 1) (1 + \alpha \xi) + x)}{(\omega x^2 + 1)(1+\alpha \xi+\epsilon y)+x} - m \bigg|_{(x^*,y^*)}. 
\end{eqnarray*}

Upon simplification, we have

\begin{equation} \label{4midinttrace}
	\begin{split}
		\text{Tr } J \bigg|_{E^*} =& \frac{1}{(\omega x^2 + 1)(1+\alpha \xi+\epsilon y)+x} \Bigg[ - \alpha \xi x \left(\frac{3 \omega x^2}{\gamma} - 2 \omega x + \frac{1}{\gamma}\right)  \\
		& + \left(1 - \frac{2 x}{\gamma}\right) x - x (1 + \epsilon y) \left(\frac{3 \omega x^2}{\gamma} - 2 \omega x + \frac{1}{\gamma}\right) - m \epsilon y (\omega x^2 + 1) \Bigg] \bigg|_{*}. 
	\end{split}
\end{equation}

Here we can consider only the case $\frac{3 \omega x^2}{\gamma} - 2 \omega x + \frac{1}{\gamma} > 0$. Else, from lemma \autoref{4midintdetsign}, the determinent is negative resulting in a saddle point. 

After substituting $1 + \alpha \xi + \epsilon y = \frac{\delta \xi}{m} + \frac{(\delta - m)x}{m(\omega x^2 + 1)}$, the trace of the jacobian is negative when 

\begin{equation} 
	\epsilon >  \frac{m x^* \left(1 - \frac{2 x^*}{\gamma} \right) - \left(\delta \xi + \left( \delta - m\right)\frac{x}{\omega x^2 + 1}\right) \left(\frac{3 \omega {x^*}^2}{\gamma} - 2 \omega x^* + \frac{1}{\gamma} \right)}{m^2 y^* \left(\omega {x^*}^2 + 1\right)^2},
\end{equation}
provided $\frac{3 \omega x^2}{\gamma} - 2 \omega x + \frac{1}{\gamma} > 0$.

The results obtained in this section can be summarized as follows:
\begin{thm}
	The interior equilibrium point ($E^* = (x^*,y^*)$) of the system (\ref{4mid}) exists when it satisfies the conditions in Lemma \ref{4midintcond}. The nature of this equilibrium depends on the signs of determinent and trace of the jacobian matrix given in equations (\ref{4midintdet}) and (\ref{4midinttrace}). The interior equilibrium is 
	\begin{itemize}
		\item a saddle point when $$\frac{3 \omega x^2}{\gamma} - 2 \omega x + \frac{1}{\gamma} < 0 \implies x^* \in \left(\frac{\gamma}{3} - \frac{1}{3} \sqrt{\gamma^2 - \frac{3}{\omega}}, \frac{\gamma}{3} + \frac{1}{3} \sqrt{\gamma^2 - \frac{3}{\omega}}\right).$$
	\end{itemize}		
	When $\frac{3 \omega x^2}{\gamma} - 2 \omega x + \frac{1}{\gamma} > 0$,
	\begin{itemize}
		\item asymptotically stable when $\epsilon >  \frac{m x^* \left(1 - \frac{2 x^*}{\gamma} \right) - \left(\delta \xi + \left( \delta - m\right)\frac{x}{\omega x^2 + 1}\right) \left(\frac{3 \omega {x^*}^2}{\gamma} - 2 \omega x^* + \frac{1}{\gamma} \right)}{m^2 y^* \left(\omega {x^*}^2 + 1\right)^2}$,
		\item an unstable point when $\epsilon <  \frac{m x^* \left(1 - \frac{2 x^*}{\gamma} \right) - \left(\delta \xi + \left( \delta - m\right)\frac{x}{\omega x^2 + 1}\right) \left(\frac{3 \omega {x^*}^2}{\gamma} - 2 \omega x^* + \frac{1}{\gamma} \right)}{m^2 y^* \left(\omega {x^*}^2 + 1\right)^2}$.
	\end{itemize}		
\end{thm}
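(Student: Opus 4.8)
The plan is to read off the type of $E^*$ directly from the two scalars $\text{Det } J|_{E^*}$ and $\text{Tr } J|_{E^*}$ already computed in (\ref{4midintdet}) and (\ref{4midinttrace}), using the standard classification of a planar equilibrium. Since the characteristic polynomial at $E^*$ is $\lambda^2 - (\text{Tr } J)\lambda + \text{Det } J = 0$, its roots satisfy $\lambda_1\lambda_2 = \text{Det } J$ and $\lambda_1+\lambda_2 = \text{Tr } J$. Hence $\text{Det } J < 0$ forces one positive and one negative real eigenvalue (a saddle), whereas $\text{Det } J > 0$ makes $E^*$ a node or focus whose stability is then decided by $\text{Tr } J$: asymptotically stable when $\text{Tr } J < 0$ and unstable when $\text{Tr } J > 0$ (the borderline $\text{Tr } J = 0$ being the Hopf threshold). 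The entire argument therefore reduces to sign bookkeeping for these two expressions.

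First I would settle the determinant. In (\ref{4midintdet}) the prefactor $\dfrac{m\epsilon x^* y^*}{(\omega {x^*}^2+1)(1+\alpha\xi+\epsilon y^*)+x^*}$ is strictly positive at an interior equilibrium, since $m,\epsilon,x^*,y^*>0$ and the denominator is a sum of positive terms. Consequently $\operatorname{sign}\big(\text{Det } J|_{E^*}\big)=\operatorname{sign} h(x^*)$, where $h(x)=\frac{3\omega x^2}{\gamma}-2\omega x+\frac{1}{\gamma}$ is precisely the quadratic analysed in Lemma \ref{4midintdetsign}. When $h(x^*)<0$ the determinant is negative and $E^*$ is a saddle; solving $h(x^*)<0$ identifies $x^*$ with the open interval $\big(\frac{\gamma}{3}-\frac13\sqrt{\gamma^2-\frac3\omega},\ \frac{\gamma}{3}+\frac13\sqrt{\gamma^2-\frac3\omega}\big)$ between the two roots of $h$ (real only when $\omega\gamma^2>3$), which is exactly the first bullet of the theorem.

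It remains to treat the regime $h(x^*)>0$, where $\text{Det } J|_{E^*}>0$ and stability is governed solely by $\text{Tr } J|_{E^*}$. The prefactor of (\ref{4midinttrace}) is again positive, so the sign of the trace equals the sign of the bracketed term. Here I would use the interior-equilibrium identity $1+\alpha\xi+\epsilon y^* = \frac{\delta\xi}{m}+\frac{(\delta-m)x^*}{m(\omega {x^*}^2+1)}$, obtained directly from (\ref{4midystar}), to combine $-\alpha\xi x h$ and $-x(1+\epsilon y)h$ into $-x h\,(1+\alpha\xi+\epsilon y)$ and then eliminate the $\epsilon y^*$ sitting inside that product. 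After this substitution the only surviving $\epsilon$-dependence is the isolated term $-m\epsilon y^*(\omega {x^*}^2+1)$, so the bracket is affine and strictly decreasing in $\epsilon$. Setting $\text{Tr } J|_{E^*}<0$ and solving for $\epsilon$ produces the critical value displayed in the theorem, and reversing the inequality yields the unstable case.

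The only real obstacle is algebraic rather than conceptual: carrying out the elimination cleanly so that the coefficient of $\epsilon$ is manifestly positive and the threshold emerges in the stated closed form, while keeping track of the approximation $x^*\ll\omega {x^*}^2+1$ that was invoked to pass to (\ref{4midintdet}). Because (\ref{4midintdet}) and (\ref{4midinttrace}) are already established, no new estimate is required; the proof is simply the assembly of the trace--determinant criterion with the positivity of the two prefactors and the sign analysis of $h(x^*)$.
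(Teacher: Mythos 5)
Your proposal is correct and follows essentially the same route as the paper: both arguments read off the classification from the trace--determinant criterion applied to (\ref{4midintdet}) and (\ref{4midinttrace}), use the positivity of the common prefactor to reduce the sign of $\text{Det}\,J|_{E^*}$ to the sign of $h(x^*)=\frac{3\omega x^2}{\gamma}-2\omega x+\frac{1}{\gamma}$ (giving the saddle interval between its roots), and then substitute the equilibrium identity $1+\alpha\xi+\epsilon y^*=\frac{\delta\xi}{m}+\frac{(\delta-m)x^*}{m(\omega {x^*}^2+1)}$ into the trace and solve the resulting inequality for $\epsilon$. Your explicit remark that the surviving $\epsilon$-dependence is affine, and your flagging of the approximation $x^*\ll\omega{x^*}^2+1$ inherited from (\ref{4midintdet}), are faithful to (and slightly more candid than) the paper's own derivation.
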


\section{Bifurcation Analysis} \label{sec:4midbifur}

\subsection{Transcritical Bifurcation}
Within this subsection, we derive the conditions for the existence of transcritical bifurcation near the equilibrium point $E_1 = (\gamma,0)$ using the parameter $\xi$ as the bifurcation parameter.

\begin{thm}
	When the parameter satisfies $(1-\alpha) \gamma + \omega \gamma^2 + 1 \neq 0,\ \delta - m \alpha \neq 0$ and $\xi = \xi^* = \frac{m (\omega \gamma^2 + \gamma + 1)-\delta \gamma}{(\delta - m \alpha)(\omega \gamma^2 + 1)}$, a transcritical bifurcation occurs at $E_1 = (\gamma,0)$ in the system (\ref{4mid}).
\end{thm}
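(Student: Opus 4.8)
The plan is to apply Sotomayor's theorem for transcritical bifurcations, taking $\xi$ as the bifurcation parameter and $E_1=(\gamma,0)$ as the bifurcating equilibrium. The structural observation that makes this the correct scenario is that $E_1=(\gamma,0)$ solves (\ref{4mid}) for \emph{every} value of $\xi$, since the predator equation carries an overall factor of $y$; writing $F=(f,g)^{T}$, we have $F(E_1,\xi)\equiv 0$, which rules out a saddle-node and flags a transcritical exchange of stability. First I would confirm that at $\xi=\xi^{*}$ the matrix $J(E_1)$ acquires a simple zero eigenvalue. From the spectrum $\{-1,\lambda_2\}$ already recorded for $J(E_1)$, the threshold $\xi^{*}$ is exactly the root of $\lambda_2=0$, i.e. of $(\delta-m)\gamma+(\delta\xi-m(1+\alpha\xi))(\omega\gamma^2+1)=0$; this is linear in $\xi$, and solving it returns the stated $\xi^{*}$, with the hypothesis $\delta-m\alpha\neq 0$ being precisely what makes the solution well-defined. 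Because the second eigenvalue is $-1\neq 0$, the zero eigenvalue is simple and isolated on the imaginary axis, so the hypotheses of Sotomayor's theorem are in force.

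Next I would extract the null vectors of $J(E_1)$ at $\xi=\xi^{*}$. Since $J(E_1)$ is upper triangular with diagonal $(-1,0)$, a right null vector is $v=(-\gamma/D_0,\,1)^{T}$ with $D_0=(\omega\gamma^2+1)(1+\alpha\xi^{*})+\gamma$, and a left null vector is $w=(0,1)^{T}$. The form $w=(0,1)^{T}$ is the essential simplification: every Sotomayor quantity then reduces to the corresponding derivative of the single component $g$, all evaluated at $y=0$, and writing $g=y\,h(x,y)$ for the relevant rational function $h$ lets the factor $y$ do most of the work. In particular the first condition $w^{T}F_{\xi}(E_1,\xi^{*})=g_{\xi}\big|_{E_1}=0$ holds automatically, because $g_{\xi}$ still retains the factor $y$. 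For the transversality condition I would form $g_{\xi}$, differentiate in $(x,y)$ and dot with $v$; the $y$-factor annihilates the $\partial_x$ contribution, leaving $w^{T}[DF_{\xi}\,v]=\partial_y g_{\xi}\big|_{E_1}=\delta(\omega\gamma^2+1)\bigl[(1-\alpha)\gamma+\omega\gamma^2+1\bigr]/D_0^{2}$, which is nonzero exactly under the stated hypothesis $(1-\alpha)\gamma+\omega\gamma^2+1\neq 0$, recovering the second displayed condition.

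The remaining and genuinely delicate step is the non-degeneracy condition $w^{T}[D^2F(v,v)]=g_{xx}v_1^2+2g_{xy}v_1v_2+g_{yy}v_2^2\neq 0$ at $E_1$. Using $g=y\,h$ again, one gets $g_{xx}\big|_{E_1}=0$, $g_{xy}\big|_{E_1}=h_x\big|_{E_1}$ and $g_{yy}\big|_{E_1}=2h_y\big|_{E_1}$, so the expression collapses to $-2(\gamma/D_0)\,h_x\big|_{E_1}+2\,h_y\big|_{E_1}$. I expect this to be the main obstacle: it is where the bifurcation relation $\delta\bigl(\gamma+\xi^{*}(\omega\gamma^2+1)\bigr)=mD_0$ (equivalently $\lambda_2=0$) has to be substituted back to simplify the partials, for instance reducing $h_x\big|_{E_1}$ to $(\delta-m)(1-\omega\gamma^2)/[(\omega\gamma^2+1)D_0]$ and $h_y\big|_{E_1}$ to $-\epsilon m(\omega\gamma^2+1)/D_0$, after which one must argue the resulting combination does not vanish. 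Thus the two displayed hypotheses secure well-definedness of $\xi^{*}$ and transversality, while checking that this quadratic term survives is the part requiring the most care.
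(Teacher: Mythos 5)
Your proposal is correct and follows essentially the same route as the paper: Sotomayor's theorem at $E_1=(\gamma,0)$ with $\xi$ as bifurcation parameter, the same null vectors (yours are the paper's $V,W$ up to rescaling $V$ by $-\gamma/D_0$), and the same three conditions, with your factorization $g=y\,h$ just streamlining the evaluation at $y=0$. The intermediate quantities you sketch ($h_\xi$, $h_x$, $h_y$ at $E_1$ after substituting $\lambda_2=0$) agree with the paper's expressions after accounting for the rescaling, and like the paper you leave the nonvanishing of the quadratic term $w^{T}[D^2F(v,v)]$ as an asserted genericity condition rather than deriving it from the stated hypotheses.
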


\begin{proof}
	The jacobian matrix corresponding to equilibrium point $E_1=(\gamma,0)$ is given by
	
	\begin{equation*}
		J(E_1) = 
		\begin{bmatrix}
			-1  & \frac{-\gamma}{\gamma + (1 + \alpha \xi) (\omega \gamma^2 + 1)} \\\
			0 & \frac{(\delta - m) \gamma + (\delta \xi - m (1+\alpha \xi))(\omega \gamma^2 + 1)}{\gamma + (1 + \alpha \xi) (\omega \gamma^2 + 1)}
		\end{bmatrix}.
	\end{equation*}
	The eigenvectors corresponding to the zero eigenvalues of $J(E_1)$ and $J(E_1)^{T}$ be denoted by $V$ and $W$, respectively. 
	
	\begin{equation*}
		V = \begin{bmatrix} V_1  \\ V_2 \end{bmatrix} = \begin{bmatrix}	1  \\ - \frac{\gamma + (1 + \alpha \xi) (\omega \gamma^2 + 1)}{\gamma} \end{bmatrix} , \ \ W = \begin{bmatrix} 0 \\ 1 \end{bmatrix}.
	\end{equation*}
	Note that $V_2 < 0$. Let us denote system (\ref{4mid}) as $H = \begin{bmatrix} F  \\ G \end{bmatrix}.$
	Thus, 
	
	\begin{eqnarray*}
		H_{\xi} (E_1; \xi^*) &=&  \begin{bmatrix} 0 \\ 0 \end{bmatrix}, \\
		DH_\xi(E_1; \xi^*)V &=& 
		\begin{bmatrix}
			\frac{\partial F_\xi}{\partial x} & \frac{\partial F_\xi}{\partial y} \\
			\frac{\partial G_\xi}{\partial x} & \frac{\partial G_\xi}{\partial y}
		\end{bmatrix}
		\begin{bmatrix}
			V_1 \\
			V_2
		\end{bmatrix}
		_{(E_1; \xi^*)} = \frac{-\alpha (\omega \gamma^2 + 1)}{\gamma + (\omega \gamma^2 + 1)(1+\alpha \xi)}
		\begin{bmatrix}
			1 \\
			\frac{(1-\alpha)\gamma + \omega \gamma^2 + 1}{\gamma}
		\end{bmatrix}, \\ 
		D^2 H(E_1; \xi^*)(V, V) &=&
		\begin{bmatrix}
			\frac{\partial^2 F}{\partial x^2} V_1^2 + 2 \frac{\partial^2 F}{\partial x \partial y} V_1 V_2 + \frac{\partial^2 F}{\partial y^2} V_2^2 \\
			\frac{\partial^2 G}{\partial x^2} V_1^2 + 2 \frac{\partial^2 G}{\partial x \partial y} V_1 V_2 + \frac{\partial^2 G}{\partial y^2} V_2^2
		\end{bmatrix}
		_{(E_1, \xi^*)} \\
		&=& \begin{bmatrix}
			-\frac{2}{\gamma} - \frac{2 (\omega \gamma^2 - 1) (1 + \alpha \xi)}{\gamma (\gamma + (1 + \alpha \xi)(\omega \gamma^2 + 1))} + \frac{2 \epsilon (\omega \gamma^2 + 1)}{\gamma} \\
			- \frac{2 \delta (1 + \alpha \xi - \xi) ( 1 - \omega \gamma^2)}{\gamma (\gamma + (1 + \alpha \xi) (\omega \gamma^2 + 1))}- \frac{2 \delta \epsilon (\gamma + \xi (\omega \gamma^2 + 1)) (\omega \gamma^2 + 1)}{\gamma^2}
		\end{bmatrix}.
	\end{eqnarray*}
	
	If $1+\omega \gamma^2 + (1-\alpha)\gamma \neq 0$, we have
	
	\begin{eqnarray*}
		W^{T} H_{\xi} (E_1; \xi^*)&=& 0, \\
		W^{T} [DH_\xi(E_1; \xi^*)V]&=& \frac{-\alpha (\omega \gamma^2 + 1) (1+\omega \gamma^2 +(1-\alpha)\gamma)}{\gamma (\gamma + (1 + \alpha \xi) (1+\omega \gamma^2))} \neq 0, \\
		W^{T} [D^2 H(E_1; \xi^*)(V, V) ]&=& \frac{ (1 + \alpha \xi - \xi) ( 1 - \omega \gamma^2)}{\gamma + (1 + \alpha \xi) (\omega \gamma^2 + 1)} + \frac{\epsilon (\gamma + \xi (\omega \gamma^2 + 1)) (\omega \gamma^2 + 1)}{\gamma}\\ & & \neq 0.
	\end{eqnarray*}
	By the Sotomayor's theorem \cite{perko2013differential}, the system (\ref{4mid}) undergoes a transcritical bifurcation around $E_1$ at $\xi = \xi^*$.
\end{proof}

Now, We numerically simulate the bifurcations exhibited by the system (\ref{4mid}) with respect to the bifurcation parameter, the quantity of additional food ($\xi$). We depict the equilibria and their stability for the following set of parameter values, $\gamma = 1.0,\ \alpha = 1.0,\ \epsilon = 0.5,\ \delta = 8.0,\ m=6.0, \ \omega = 4.0$. The two subplots in \autoref{trans14mid} - \autoref{saddle4mid} represents the bifurcation diagrams for the parameter $\xi$ with respect to the prey and predator populations respectively. 

In \autoref{trans14mid}, the nature of interior equilibrium shifts from saddle to stable and the the trivial equilibrium $E_0$ changes its nature around the same point from saddle to unstable node. This results in the transcritical bifurcation at the value $\xi =2.9$. \autoref{trans24mid} represents the transcritical bifurcation where the stability of interior equilibrium ($E^* = (x^*,y^*)$) and the axial eqilibrium ($E_1 = (\gamma,0)$) are exchanged at the bifurcation point $\xi = 2.8$. 

\begin{figure}[!ht]
	\centering
	\includegraphics[width=\textwidth]{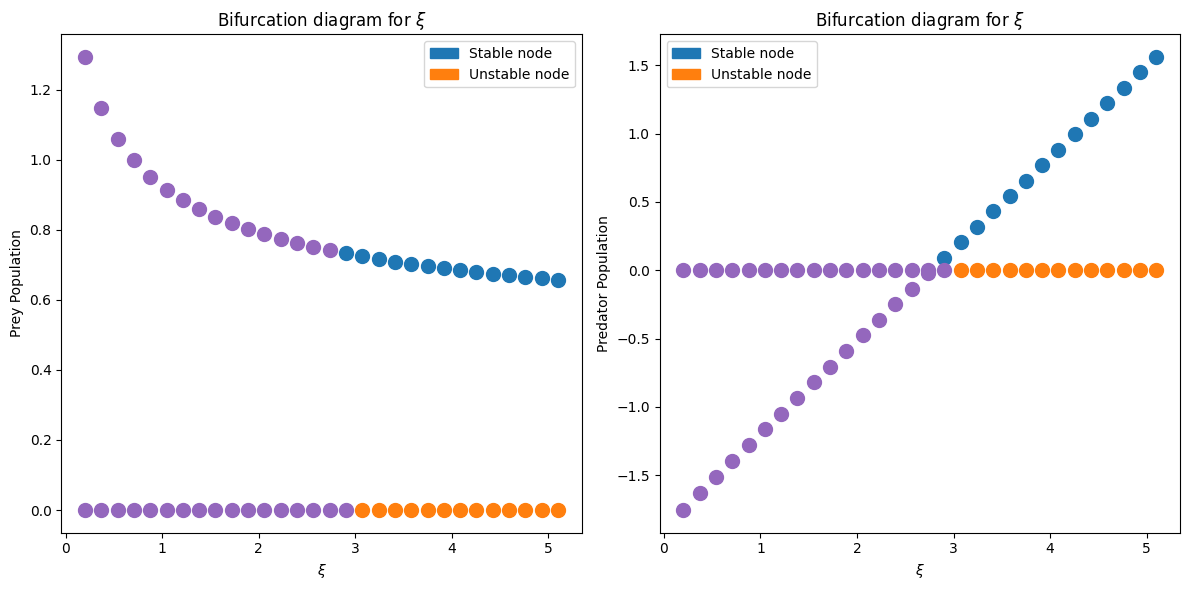}
	\caption{Transcritical bifurcation diagram around trivial equilibrium $E_0 = (0,0)$ with respect to the quantity of additional food $\xi$.}
	\label{trans14mid}
\end{figure}

\begin{figure}[!ht]
	\centering
	\includegraphics[width=\textwidth]{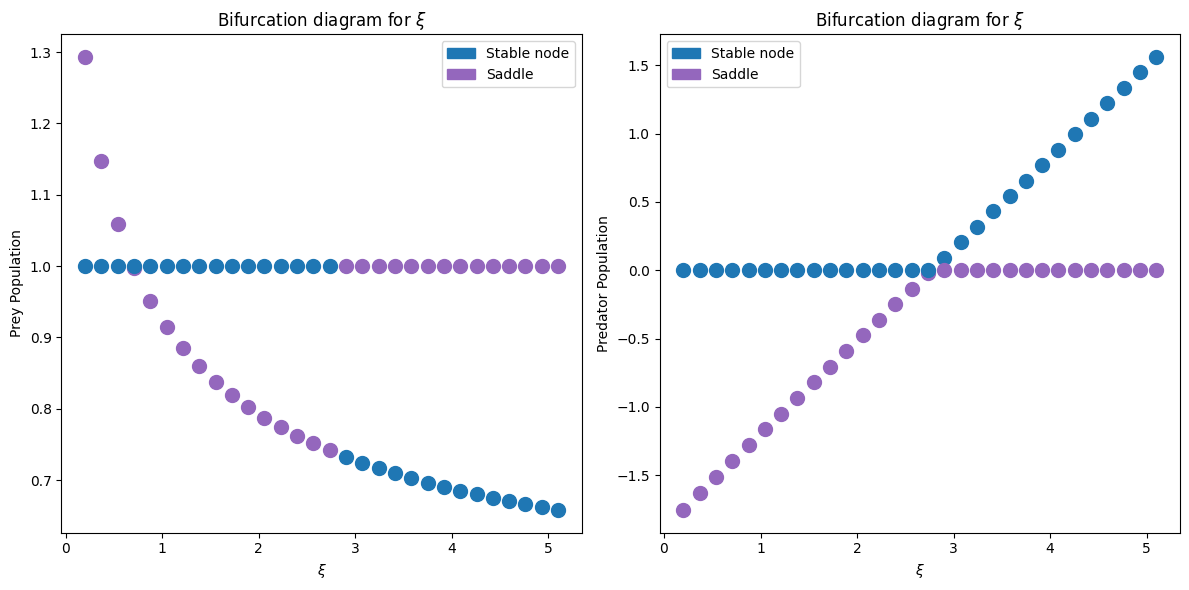}
	\caption{Transcritical bifurcation diagram around axial equilibrium $E_1 = (\gamma,0)$ with respect to the quantity of additional food $\xi$.}
	\label{trans24mid}
\end{figure}

\subsection{Saddle-node Bifurcation}
Within this subsection, we derive the conditions for the existence of saddle-node bifurcation near the equilibrium point $E_2 = \left( 0, \frac{\delta \xi - m (1+\alpha \xi)}{m \epsilon} \right)$ using the parameter $\xi$ as the bifurcation parameter.

\begin{thm}
	When the parameter satisfies $\delta \neq m \alpha,\ \epsilon \delta \xi \neq \delta \xi - m (1 + \alpha \xi)$ and $\xi = \xi^* = \frac{m}{\delta - m \alpha}$, a saddle-node bifurcation occurs at $E_2 = \left( 0, \frac{\delta \xi - m (1+\alpha \xi)}{m \epsilon} \right)$ in the system (\ref{4mid}).
\end{thm}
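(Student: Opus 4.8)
The plan is to establish the bifurcation exactly as in the preceding transcritical analysis at $E_1$, namely by invoking Sotomayor's theorem, but verifying the \emph{saddle-node} non-degeneracy conditions rather than the transcritical ones. Writing the vector field of (\ref{4mid}) as $H = (F,G)^T$, the first step is to record the Jacobian $J(E_2)$ at the candidate value $\xi^*$. Since $E_2$ lies on the invariant axis $x=0$, this matrix is lower-triangular, with diagonal entries $1-\frac{\delta \xi - m(1+\alpha\xi)}{\delta \xi \epsilon}$ and $-\frac{m}{\delta \xi}(\delta \xi - m(1+\alpha\xi))$. I would then show that $\xi^* = \frac{m}{\delta - m\alpha}$ (well-defined precisely because $\delta \neq m\alpha$) is exactly the value at which the second eigenvalue vanishes, while the hypothesis $\epsilon \delta \xi \neq \delta \xi - m(1+\alpha\xi)$ keeps the first eigenvalue away from zero; together these guarantee that $J(E_2)$ has a \emph{simple} zero eigenvalue at $\xi=\xi^*$.

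Next I would compute the right null vector $V$ of $J(E_2)$ and the left null vector $W$ of $J(E_2)^T$ associated with the zero eigenvalue. Exploiting the triangular structure — and noting that at $\xi^*$ the off-diagonal entry also carries the factor $\delta \xi - m(1+\alpha\xi)$, which then vanishes — both reduce to the $y$-direction, so $V = W = (0,1)^T$ up to normalization. With $V,W$ fixed, the two quantities to evaluate are the transversality term $W^T H_\xi(E_2;\xi^*)$ and the quadratic term $W^T\!\left[D^2 H(E_2;\xi^*)(V,V)\right]$; a saddle-node in Sotomayor's theorem requires both to be nonzero (in contrast to the transcritical case treated above, where instead $W^T H_\xi = 0$ but $W^T[DH_\xi V] \neq 0$).

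Because $W=(0,1)^T$, only the $G$-component contributes, so the computational core is just $\partial G/\partial \xi$ and $\partial^2 G/\partial y^2$ evaluated at $E_2$ and then at $\xi=\xi^*$. The Hessian contraction $W^T[D^2 H(V,V)] = \partial^2 G/\partial y^2$ is expected to reduce, after restricting to $x=0$ and substituting $\xi^*$, to an expression of the form $-\frac{2\epsilon \delta \xi^*}{(1+\alpha \xi^*)^2}$, which is manifestly nonzero under the standing positivity assumptions and therefore supplies the required quadratic non-degeneracy immediately.

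I expect the genuine obstacle to be the transversality condition $W^T H_\xi(E_2;\xi^*)\neq 0$. The difficulty is structural: $E_2$ sits on the invariant manifold $x=0$, along which many of the $\xi$-derivative terms degenerate, and at $\xi^*$ the equilibrium collapses toward the origin (its $y$-coordinate $\frac{\delta \xi - m(1+\alpha\xi)}{m\epsilon}$ tends to $0$). Consequently a careless evaluation of $\partial G/\partial \xi$ can lose exactly the factor on which non-vanishing hinges, and one must track precisely which terms survive at $(0, y_2(\xi^*))$ with $\xi=\xi^*$. Confirming that this quantity is truly nonzero — rather than vanishing, which would instead signal a transcritical exchange of $E_2$ with $E_0$ — is the crux, and it is here that the two hypotheses $\delta \neq m\alpha$ and $\epsilon \delta \xi \neq \delta \xi - m(1+\alpha\xi)$ must be shown to do their work.
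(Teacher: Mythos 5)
Your overall route is the same as the paper's: Sotomayor's theorem at $E_2$ with $\xi$ as the bifurcation parameter, checking the saddle-node conditions $W^{T}H_{\xi}\neq 0$ and $W^{T}[D^{2}H(V,V)]\neq 0$. The divergence is in the choice of $W$. You take the triangular structure of $J(E_2;\xi^*)$ seriously and conclude $V=W=(0,1)^{T}$, which is indeed the correct null pair: at $\xi^*=\frac{m}{\delta-m\alpha}$ one has $\delta\xi^*-m(1+\alpha\xi^*)=0$, so every off-diagonal and the $(2,2)$ entry vanish and $J(E_2;\xi^*)=\mathrm{diag}(1,0)$. The paper instead uses $W=\left(1,\ \tfrac{1}{\delta-m}-\tfrac{\epsilon\delta\xi}{(\delta-m)(\delta\xi-m(1+\alpha\xi))}\right)^{T}$, computed from the Jacobian at generic $\xi$; its second entry diverges as $\xi\to\xi^*$, and the paper's nonzero conclusions arise from formally cancelling that divergence against factors of $\delta\xi-m(1+\alpha\xi)$ that vanish at $\xi^*$.

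The genuine gap is that you defer the crux, the transversality $W^{T}H_{\xi}(E_2;\xi^*)\neq 0$, and if you carry it out with your (correct) $W=(0,1)^{T}$ it fails. Only the $G$-component contributes, and $\partial G/\partial\xi$ evaluated at $E_2$ equals $\frac{(\delta\xi-m(1+\alpha\xi))(\delta-m\alpha)}{\epsilon\delta\xi}$, which is zero at $\xi=\xi^*$ precisely because $\delta\xi^*-m(1+\alpha\xi^*)=0$; equivalently, the $y$-coordinate of $E_2$ collapses to $0$ there, so $E_2$ has merged with $E_0=(0,0)$, and $E_0$ persists on both sides of $\xi^*$. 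Hence the saddle-node nondegeneracy condition is not satisfied with the genuine null vectors, and the proof as you outline it cannot close; what the computation actually signals is the transcritical-type exchange between $E_0$ and $E_2$ that you yourself flag as the alternative outcome. Note also that at $\xi=\xi^*$ the hypothesis $\epsilon\delta\xi\neq\delta\xi-m(1+\alpha\xi)$ reduces to $\epsilon\delta\xi^*\neq 0$ and does nothing to rescue the transversality. To finish, you would have to either justify the paper's singular choice of $W$ (which is not a null vector of $J(E_2;\xi^*)^{T}$) or reclassify the bifurcation; as written, your plan exposes the degeneracy rather than establishing the stated saddle-node.
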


\begin{proof}
	The jacobian matrix corresponding to equilibrium point $E_2 = \left( 0, \frac{\delta \xi - m (1+\alpha \xi)}{m \epsilon} \right)$ is given by
	
	\begin{equation*}
		J(E_2) =
		\begin{bmatrix}
			1 - \frac{\delta \xi - m (1 + \alpha \xi)}{\delta \xi \epsilon} & 0 \\
			\frac{(\delta \xi - m (1 + \alpha \xi))(\delta - m)}{\epsilon \delta \xi} & -\frac{m \left(\delta \xi - m (1+\alpha \xi)\right)}{\delta \xi}
		\end{bmatrix}.
	\end{equation*}
	
	The eigenvectors corresponding to the zero eigenvalues of $J(E_2)$ and $J(E_2)^{T}$ be denoted by $V$ and $W$, respectively. 
	
	\begin{equation*}
		V = \begin{bmatrix} V_1  \\ V_2 \end{bmatrix} = \begin{bmatrix}	0  \\ 1 \end{bmatrix}, \ \ W = \begin{bmatrix} 1 \\ \frac{1}{\delta - m} - \frac{\epsilon \delta \xi}{(\delta - m)(\delta \xi - m (1 + \alpha \xi))} \end{bmatrix}.
	\end{equation*}
	
	Let us denote system (\ref{4mid}) as $H = \left[ \begin{matrix} F  \\ G \end{matrix} \right].$
	Thus, 
	
	\begin{eqnarray*}
		H_{\xi} (E_2; \xi^*) &=&  \begin{bmatrix} 0 \\ \frac{\delta \xi - m (1+\alpha\xi)}{\epsilon \delta \xi} (\delta - m \alpha) \end{bmatrix}, \\
		DH_\xi(E_2; \xi^*)V &=& 
		\begin{bmatrix}
			\frac{\partial F_\xi}{\partial x} & \frac{\partial F_\xi}{\partial y} \\
			\frac{\partial G_\xi}{\partial x} & \frac{\partial G_\xi}{\partial y}
		\end{bmatrix}
		\begin{bmatrix}
			V_1 \\
			V_2
		\end{bmatrix}
		_{(E_2; \xi^*)} \\
		&=& 
		\begin{bmatrix}	0 \\
			\frac{m^2}{\delta^2 \xi^2} \left[ \delta \alpha \xi + (1 + \alpha \xi) (\delta - 2 m \alpha)\right]
		\end{bmatrix}, \\
		D^2 H(E_2; \xi^*)(V, V) &=&
		\begin{bmatrix}
			\frac{\partial^2 F}{\partial x^2} V_1^2 + 2 \frac{\partial^2 F}{\partial x \partial y} V_1 V_2 + \frac{\partial^2 F}{\partial y^2} V_2^2 \\
			\frac{\partial^2 G}{\partial x^2} V_1^2 + 2 \frac{\partial^2 G}{\partial x \partial y} V_1 V_2 + \frac{\partial^2 G}{\partial y^2} V_2^2
		\end{bmatrix}_{(E_2, \xi^*)} = \begin{bmatrix} 0 \\ - \frac{2 m^3 \epsilon (1 + \alpha \xi)}{\delta^2 \xi^2}	\end{bmatrix}.
	\end{eqnarray*}
	
	If $\delta \neq m \alpha$, we have
	
	\begin{eqnarray*}
		W^{T} H_{\xi} (E_2; \xi^*) &=& \frac{\left((1 - \epsilon) \delta \xi - m (1+\alpha\xi)\right) (\delta - m \alpha)}{\epsilon \delta \xi (\delta - m)} \neq 0, \\
		W^{T} [DH_\xi(E_2; \xi^*)V] &=& \frac{m^2 \left( \delta \alpha \xi + (1 + \alpha \xi) (\delta - 2 m \alpha )\right)}{\delta^2 \xi^2 (\delta - m)} \left(1 - \frac{\epsilon \delta \xi }{\delta \xi - m (1 + \alpha \xi)}\right)\neq 0, \\
		W^{T} [D^2 H(E_2; \xi^*)(V, V) ] &=& - \frac{2 m^3 \epsilon (1 + \alpha \xi)}{\delta^2 \xi^2 (\delta - m)}  \left(1 - \frac{\epsilon \delta \xi }{\delta \xi - m (1 + \alpha \xi)}\right) \neq 0.
	\end{eqnarray*}
	By the Sotomayor's theorem \cite{perko2013differential}, the system (\ref{4mid}) undergoes a saddle-node bifurcation around $E_2$ at $\xi = \xi^*$.
\end{proof}

In \autoref{saddle4mid}, the saddle-node bifurcation around the another axial equilibrium $E_2$ with respect to $\xi$ is discussed. For the parameter values $\gamma = 1.0,\ \alpha = 1.0,\ \epsilon = 0.5,\ \delta = 8.0,\ m=6.0, \ \omega = 4.0$, both the equilibria $E_0$ and $E_2$ exist and move towards each other as $\xi$ reduces. Further at $\xi = 3.0$, both ($E_0$ and $E_2$) collide and become $E_0$ which ensures the happening of saddle-node bifurcation. When $\xi < 3.0$, then there does not exist any prey-free equilibrium $E_2$.

\begin{figure}[!ht]
	\centering
	\includegraphics[width=\textwidth]{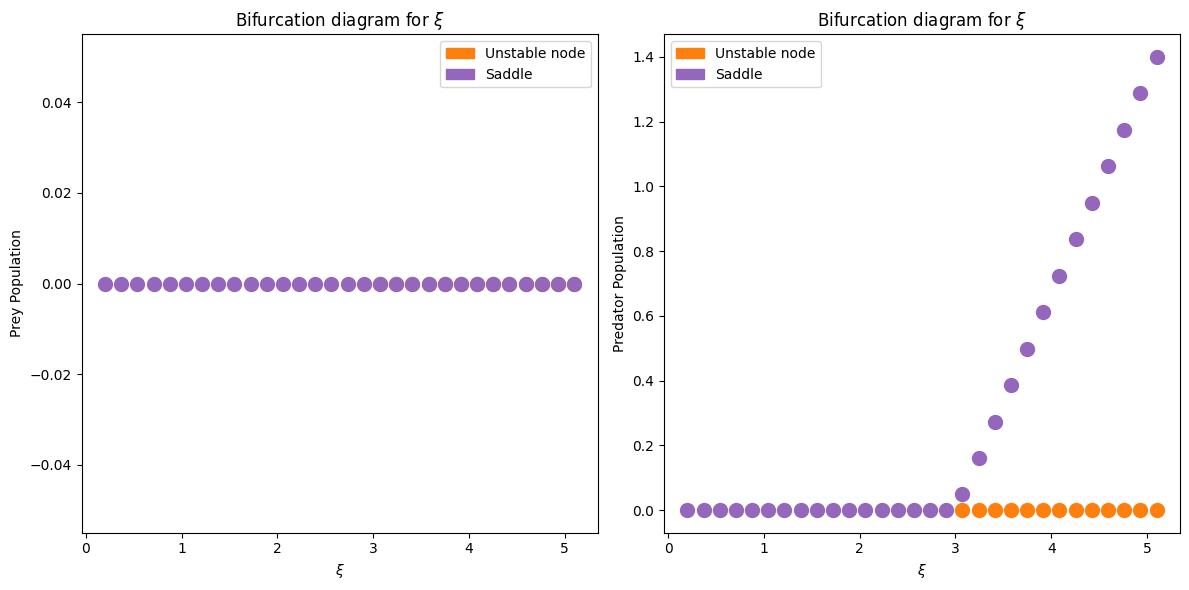}
	\caption{Saddle-node bifurcation diagram around axial equilibrium $E_2 = \left(0,\frac{\delta \xi - m (1 + \alpha \xi)}{m \epsilon}\right)$ with respect to the quantity of additional food $\xi$.}
	\label{saddle4mid}
\end{figure}

\subsection{Hopf Bifurcation}

Within this subsection, we derive the conditions for the existence of Hopf bifurcation near the equilibrium point $E^* = \left( x^*, y^* \right)$ using the parameter $\xi$ as the bifurcation parameter.

\begin{thm}
	Let the following conditions be satisfied by the parameters of system (\ref{4mid}), 
	\begin{itemize}
		\item  $\xi = \xi^* = \frac{ \left(1 - \frac{2 x}{\gamma}\right) x - m \epsilon y (\omega x^2 + 1)}{\alpha x \left(\frac{3 \omega x^2}{\gamma} - 2 \omega x + \frac{1}{\gamma}\right) }\bigg|_{*} - \frac{1 + \epsilon y^*}{\alpha},$
		\item  $\frac{3 \omega}{\gamma}{x^*}^2 - 2 \omega x^* + \frac{1}{\gamma} > 0,$
		\item $x^* \neq \frac{\gamma}{3} \pm \frac{1}{3} \sqrt{\gamma^2 - \frac{3}{\omega}}$ at $\xi = \xi^*$,
	\end{itemize}
	then the system (\ref{4mid}) experiences Hopf bifurcation with respect to quantity of additional food $\xi$ about the interior equilibrium point $E^* = (x^*,y^*)$. Here $\xi^*$ is the critical quantity of additional food at which the Hopf bifurcation occurs.
\end{thm}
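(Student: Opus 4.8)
The plan is to verify the three conditions of the Hopf bifurcation theorem at the interior equilibrium $E^* = (x^*, y^*)$: that at $\xi = \xi^*$ the Jacobian $J(E^*)$ has a pair of purely imaginary eigenvalues, that these eigenvalues have nonzero imaginary part, and that they cross the imaginary axis transversally as $\xi$ passes through $\xi^*$. Since $J(E^*)$ is $2 \times 2$, its eigenvalues solve $\lambda^2 - (\text{Tr } J)\lambda + \text{Det } J = 0$, so the whole argument reduces to controlling the trace (equation (\ref{4midinttrace})) and the determinant (equation (\ref{4midintdet})) as $\xi$ varies.

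First I would establish the purely-imaginary eigenvalue condition. Writing $h(x) = \frac{3\omega x^2}{\gamma} - 2\omega x + \frac{1}{\gamma}$ and setting $\text{Tr } J|_{E^*} = 0$ in (\ref{4midinttrace}) (the denominator there is strictly positive at an interior point, so only the bracket matters), one isolates the term $\alpha\xi\, x\, h(x)$ and solves the resulting linear equation in $\xi$; this produces exactly the stated expression for $\xi^*$. The second hypothesis $h(x^*) > 0$ guarantees, via Lemma \ref{4midintdetsign}, that $\text{Det } J|_{E^*} > 0$. Combined with $\text{Tr } J = 0$, this forces the eigenvalues to be $\lambda_{1,2} = \pm i\sqrt{\text{Det } J|_{E^*}}$, a conjugate pair on the imaginary axis with nonzero imaginary part. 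The third hypothesis $x^* \neq \frac{\gamma}{3} \pm \frac{1}{3}\sqrt{\gamma^2 - \frac{3}{\omega}}$ keeps $h(x^*)$ bounded away from zero, so the denominator defining $\xi^*$ does not degenerate and the strict inequality $\text{Det } J > 0$ is preserved.

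The transversality condition is where the real work lies. Writing the eigenvalues as $\lambda(\xi) = p(\xi) \pm i q(\xi)$, we have $p(\xi) = \frac{1}{2}\text{Tr } J|_{E^*(\xi)}$, so I must show $\frac{d}{d\xi} p(\xi)\big|_{\xi = \xi^*} = \frac{1}{2}\frac{d}{d\xi}\big(\text{Tr } J\big)\big|_{\xi^*} \neq 0$. The obstacle is that the trace depends on $\xi$ both explicitly and implicitly through $x^*(\xi)$ and $y^*(\xi)$, and the equilibrium coordinates are defined only implicitly by the fifth-order equation (\ref{4midxstar}) together with (\ref{4midystar}). I would differentiate (\ref{4midxstar}) and (\ref{4midystar}) implicitly to extract $\frac{dx^*}{d\xi}$ and $\frac{dy^*}{d\xi}$, then form the total derivative $\frac{d}{d\xi}\text{Tr } J = \partial_\xi \text{Tr } J + \partial_{x^*}\text{Tr } J \cdot \frac{dx^*}{d\xi} + \partial_{y^*}\text{Tr } J \cdot \frac{dy^*}{d\xi}$ and show it is nonzero at $\xi^*$.

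Because this derivative is an unwieldy rational expression, I expect the cleanest route is to avoid solving for $x^*, y^*$ explicitly: one treats the trace-zero locus together with the equilibrium conditions as a coupled system, invokes the implicit function theorem, and argues that transversality can fail only on a lower-dimensional exceptional subset of parameter space, with $\frac{d}{d\xi}\text{Tr } J \neq 0$ confirmed directly at the parameter values used for the bifurcation diagrams. Once the three conditions are in place, the Hopf bifurcation theorem applies and produces the periodic orbit bifurcating from $E^*$ at $\xi = \xi^*$.
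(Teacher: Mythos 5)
Your plan reproduces the paper's argument for the first two Hopf conditions essentially verbatim: the critical value $\xi^*$ is obtained by setting the bracket in the trace expression (\ref{4midinttrace}) to zero and solving the resulting linear equation in $\xi$, and positivity of the determinant follows from Lemma \ref{4midintdetsign} via $h(x^*)=\frac{3\omega {x^*}^2}{\gamma}-2\omega x^*+\frac{1}{\gamma}>0$, giving the conjugate pair $\pm i\sqrt{\operatorname{Det}J}$. Where you genuinely depart from the paper is the transversality condition, and there you are more demanding than the source: the paper simply asserts $\frac{d}{d\xi}\operatorname{Tr}(J_{E^*})\big|_{\xi^*}=h(x^*)\neq 0$, which is (up to a nonzero factor $-\alpha x^*/D$ coming from the denominator) only the \emph{explicit} partial derivative of the trace in $\xi$ with the equilibrium coordinates frozen; the paper never accounts for the fact that $x^*(\xi)$ and $y^*(\xi)$ move along the equilibrium branch defined by (\ref{4midystar}) and (\ref{4midxstar}). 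You correctly identify that the total derivative $\partial_\xi \operatorname{Tr}J+\partial_{x^*}\operatorname{Tr}J\cdot\frac{dx^*}{d\xi}+\partial_{y^*}\operatorname{Tr}J\cdot\frac{dy^*}{d\xi}$ is what must be shown nonzero, and you propose implicit differentiation plus a genericity and numerical argument to finish. Neither you nor the paper actually completes that computation, so the transversality step remains open in both treatments; but your version makes the gap explicit, whereas the paper's stated condition $x^*\neq\frac{\gamma}{3}\pm\frac{1}{3}\sqrt{\gamma^2-\frac{3}{\omega}}$ (which is in any case already implied by the strict inequality $h(x^*)>0$) only certifies the partial derivative. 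If you carry out the implicit differentiation you sketch, your proof would be strictly stronger than the one in the paper.
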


\begin{proof}
	The characteristic equation of Jacobian matrix $J_{E^*}$ is given by 
	
	\begin{equation}
		\lambda^2 - \text{Tr}(J_{E^*}) \lambda + \text{Det}(J_{E^*}) = 0,
	\end{equation}
	where the expression of $\text{Tr}(J_{E^*})$ and $\text{Det}(J_{E^*})$ are given by 
	
	From (\ref{4midinttrace}), it is obvious that $\text{Tr}(J_{E^*}) = 0$ when 
	\begin{equation} 
		\begin{split}
			\xi = \xi^* = \frac{ \left(1 - \frac{2 x}{\gamma}\right) x - m \epsilon y (\omega x^2 + 1)}{\alpha x \left(\frac{3 \omega x^2}{\gamma} - 2 \omega x + \frac{1}{\gamma}\right) } - \frac{1 + \epsilon y}{\alpha} \bigg|_{*}. 
		\end{split}
	\end{equation}
	
	Since both eigenvalues at Hopf bifurcation point are purely imaginary numbers, we have Det$(J_{E^*}) > 0$. From lemma \autoref{4midintdetsign}, the determinent is positive when $\frac{3 \omega x^2}{\gamma} - 2 \omega x + \frac{1}{\gamma} > 0$. This happens when either $\omega \gamma^2 \leq 3$ or $$\omega \gamma^2 > 3 \text{ and } x^* \in \left\{ (0, \gamma) - \left(\frac{\gamma}{3} - \frac{1}{3} \sqrt{\gamma^2 - \frac{3}{\omega}}, \frac{\gamma}{3} + \frac{1}{3} \sqrt{\gamma^2 - \frac{3}{\omega}}\right)\right\}. $$
	
	We also have $\frac{d\left(\text{Tr}(J_{E^*}) \right)}{d \xi} \bigg|_{\xi^*} = \frac{3 \omega}{\gamma} {x^*}^2 - 2 \omega x^* + \frac{1}{\gamma} \neq 0.$ Therefore, $x^* \neq \frac{\gamma}{3} \pm \frac{1}{3} \sqrt{\gamma^2 - \frac{3}{\omega}}$.
	
	Therefore, when these conditions are satisfied, the Implicit theorem (Ref) guarentees the occurrence of Hopf bifurcation at $E^* (x^*,y^*)$ i.e., small amplitude periodic solutions bifurcate from $E^* (x^*,y^*)$ through a Hopf bifurcation.
	
\end{proof}

The existence of Hopf bifurcation with respect to $\epsilon$ is depicted in \autoref{hopf4mid}. Stable limit cycle is observed around the interior equilibrium when $\epsilon = 0.35$ and the limit cycle disappears when $\epsilon$ is increased to $0.4$. The remaining parameter values are as follows: $\gamma = 10.0,\ \alpha = 0.1,\ \xi = 0.45,\ \delta = 0.45,\ m=0.28, \ \omega = 0.01$.

\begin{figure}[ht]
	\centering
	\includegraphics[width=\textwidth]{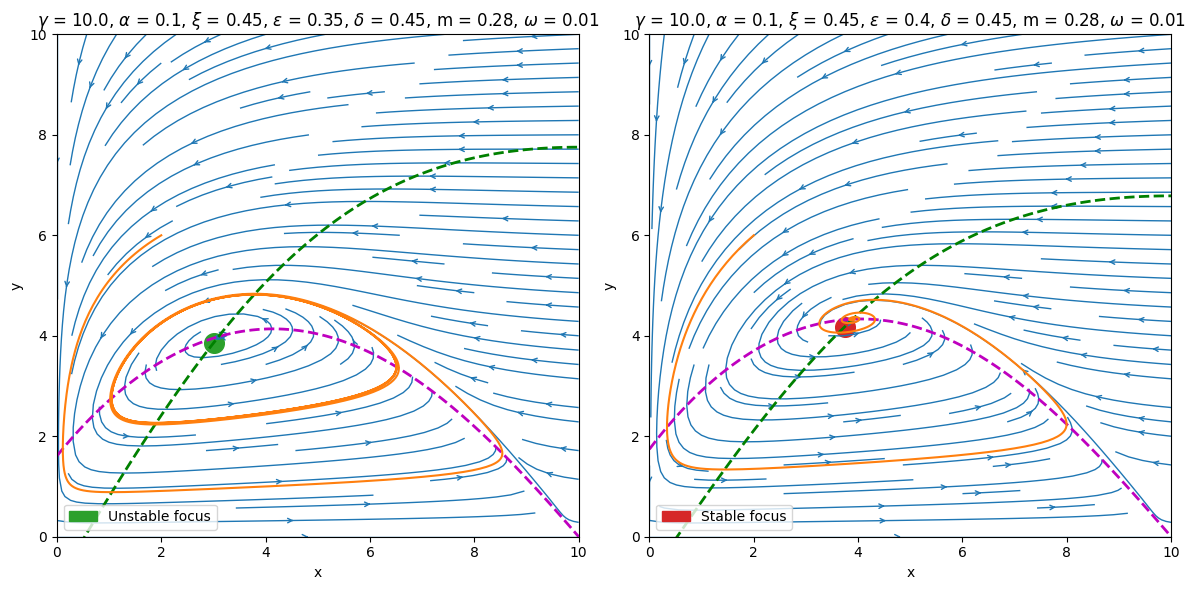}
	\caption{Supercritical Hopf bifurcation diagram with respect to the mutual interference $\epsilon$.}
	\label{hopf4mid}
\end{figure}

\section{Global Dynamics}
\label{sec:4midglobaldynamics}

In this section, we study the global dynamics of the system (\ref{4mid}) in the $\alpha$-$\xi$ parameter space. For this study, we divide the parameter space of the system (\ref{4mid}) in the absence of additional food into three regions. These regions are divided based on the qualitative behaviors of the interior equilibria of the system. They are divided into three regions, namely, $R_1,\ R_2,\ R_3$ corresponding to the space where there is no interior equilibria, one stable interior equilibria and two stable interior equilibria respectively. \autoref{initial4mid} depicts the phase portrait of the initial system in all three regions. 

\begin{figure}[!ht]
	\centering
	\includegraphics[width=\linewidth]{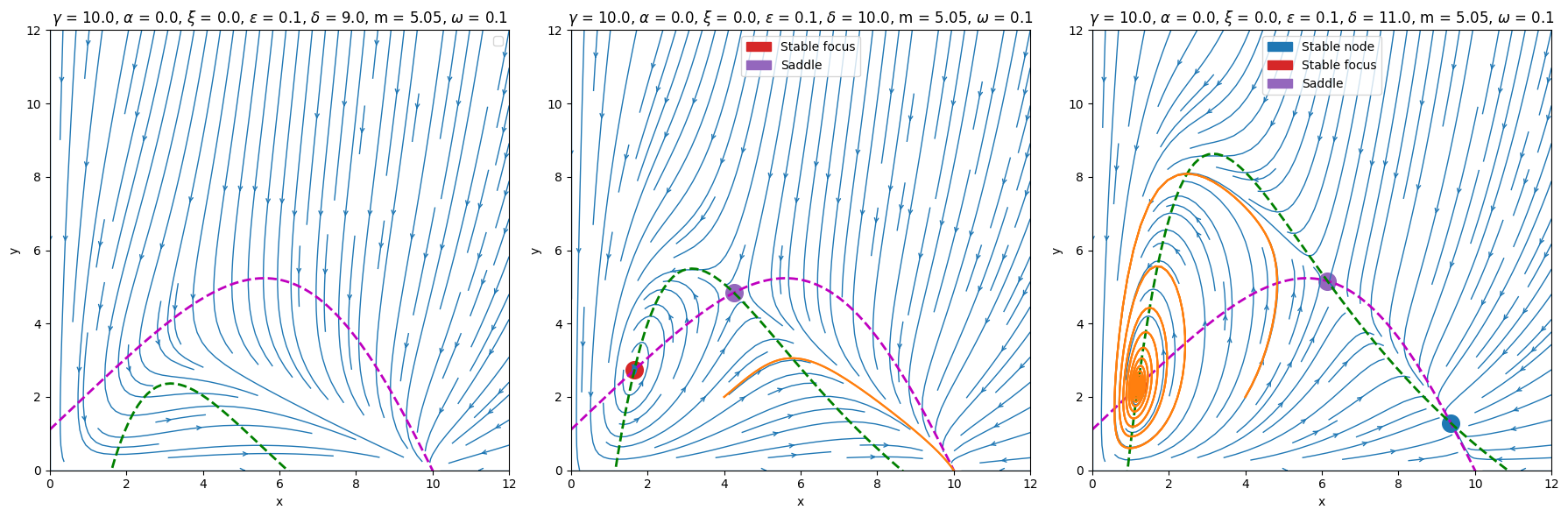}  
	\caption{Dynamics of the system (\ref{4mid}) in the absence of additional food.}
	\label{initial4mid}
\end{figure}

Now, in each of these regions, we study the influence of additional food by dividing the $\alpha - \xi$ parameter space into regions based on the following curves. Each of these curves divide the space into two regions based on the qualitative nature of the equilibrium point. 

\begin{equation*}
	\begin{split}
		\text{Bifurcation Curve for } E_0: &\ \phi_1(\alpha,\xi) : \  \delta \xi - m (1 + \alpha \xi) = 0. \\
		\text{Bifurcation Curve for } E_1: &\ \phi_2(\alpha,\xi) : \ \delta \xi - m (1 + \alpha \xi) + \frac{(\delta - m) \gamma}{\omega \gamma^2 + 1} = 0. \\
		\text{Bifurcation Curve for } E_2: &\ \phi_3(\alpha,\xi) : \ \delta \xi - m (1 + \alpha \xi) - \delta \epsilon \xi = 0. \\
		\text{Existence Curve for } E^*: &\ \phi_4(\alpha,\xi) : \ \delta \xi - m (1 + \alpha \xi) + \frac{(\delta - m)}{2 \sqrt{\omega}} = 0. \\			
	\end{split}
\end{equation*} 

\begin{figure}[!ht]
	\centering
	\includegraphics[width=0.7\linewidth]{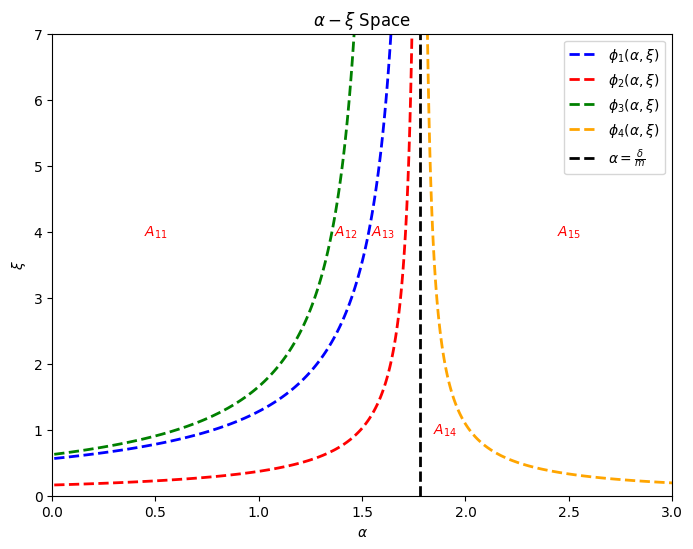} \\
	\caption{Influence of additional food on the system (\ref{4mid}) when the parameters belong to the region $R_1$.}
	\label{r14mid}
\end{figure}

\autoref{r14mid} divides the $\alpha - \xi$ space into $5$ regions. In this region, there is no interior equilibrium in the absence of additional food. As additional food is provided, the prey-free equilibrium $E_2$ is stable in the region $A_{11}$. Also, the predator-free equilibrium $E_1$ is stable in regions $A_{14}$ and $A_{15}$. Provision of additional food leads to the emergence of interior equilibria $E^*$ in the regions $A_{11},\ A_{12},\ A_{13},\ A_{14}$. However, provision of additional food in the region $A_{15}$ takes us to the original qualitative behavior as in the case of absence of additional food.

\begin{figure}[!ht]
	\centering
	\includegraphics[width=0.7\linewidth]{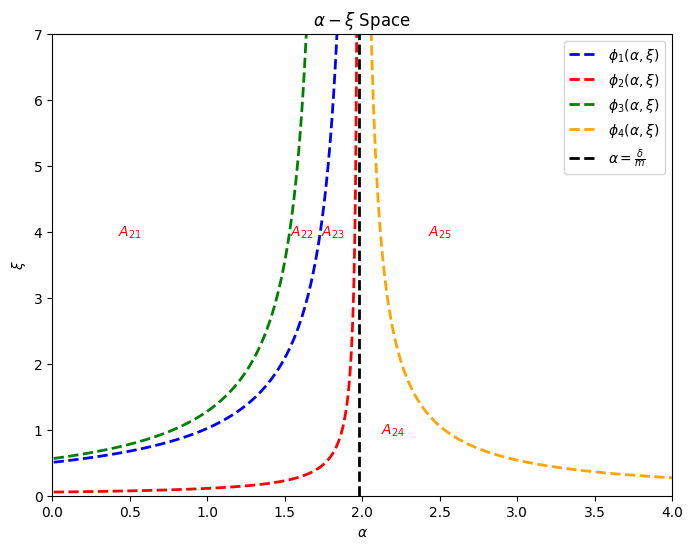} \\
	\caption{Influence of additional food on the system (\ref{4mid}) when the parameters belong to the region $R_2$.}
	\label{r24mid}
\end{figure}

\autoref{r24mid} divides the $\alpha - \xi$ space into $5$ regions. In this region, there is only one stable interior equilibrium in the absence of additional food. As additional food is provided, the prey-free equilibrium $E_2$ is also stable in the region $A_{21}$. Also, the predator-free equilibrium $E_1$ is stable in regions $A_{24}$ and $A_{25}$. As additional food is provided, the system (\ref{4mid}) continues to exhibit interior equilibria $E^*$ in the regions $A_{21},\ A_{22},\ A_{23}$ and $A_{24}$. However, provision of additional food in the region $A_{25}$ leads to the disappearance of interior equilibria.

\begin{figure}[!ht]
	\centering
	\includegraphics[width=0.7\linewidth]{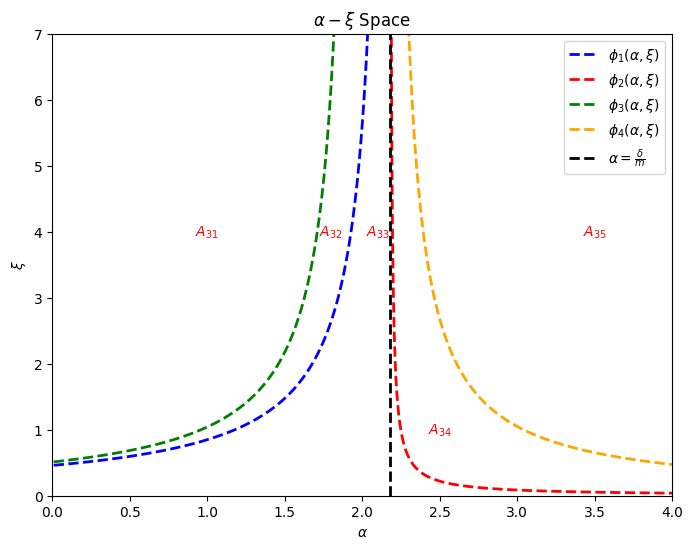} \\
	\caption{Influence of additional food on the system (\ref{4mid}) when the parameters belong to the region $R_3$.}
	\label{r34mid}
\end{figure}

\autoref{r34mid} divides the $\alpha - \xi$ space into $5$ regions. In this region, there are two stable interior equilibria in the absence of additional food. As additional food is provided, the prey-free equilibrium $E_2$ is stable in the region $A_{31}$. Also, the predator-free equilibrium $E_1$ is stable in regions $A_{34}$ and $A_{35}$. As additional food is provided, the system (\ref{4mid}) continues to exhibit interior equilibria $E^*$ in the regions $A_{31},\ A_{32},\ A_{33}$ and $A_{34}$. However, provision of additional food in the region $A_{35}$ leads to the disappearance of interior equilibria.

\section{Consequences of providing Additional Food}\label{sec:4midconseq}

\indent In this section, we explore how the introduction of additional food influences the qualitative stability of equilibria within the system. First, when multiple stable equilibria are present, the ultimate trajectory of the system is determined by the initial densities of both prey and predator populations. Second, when the system evolves toward an axial equilibrium, it has significant implications for biological conservation strategies and pest control efforts. Accordingly, we analyze how the provision of supplementary food can result in the emergence of stable axial equilibria and bistability phenomena.

Regardless of the system’s behavior in the absence of additional food, in regions $A_{12},\ A{13},\ A_{22},\ A_{23},\ A_{32}$ and $A_{33}$, the interior equilibrium remains the sole stable state when additional food is supplied. As a result, trajectories do not approach axial equilibria in these regions. Even when bistability arises, it is confined to multiple interior equilibria rather than involving any axial equilibria.

In contrast, in regions $A_{15},\ A_{25}$ and $A_{35}$, the only stable state achieved is the predator-free axial equilibrium, again independent of the system’s dynamics without additional food. Thus, adding food in these regions directs the system exclusively toward axial equilibria, precluding any occurrence of bistability.

In regions $A_{14},\ A_{24}$ and $A_{34}$, however, both a stable predator-free axial equilibrium and a stable interior equilibrium coexist. This coexistence allows for the possibility of bistability, with the system’s long-term outcome depending sensitively on the initial conditions. A similar situation arises when considering prey-free axial equilibria in the regions $A_{11},\ A_{21}$ and $A_{31}$, where the presence of an interior equilibrium and a stable prey-free state introduces the potential for multiple stable outcomes.

Thus, based on whether the goal is prey conservation or prey eradication, careful selection of initial population sizes becomes crucial to steering the system toward the desired equilibrium state.

\autoref{equiplot4mid} numerically depicts the stability nature of various equilibria that the system exhibits only when additional food and competition terms are altered. This shows the importance of these two terms in the dynamics of the system (\ref{4mid}). Each frame depicts the existence of $0-3$ interior equilibrium.

\begin{figure}[!ht]
	\includegraphics[width=\textwidth]{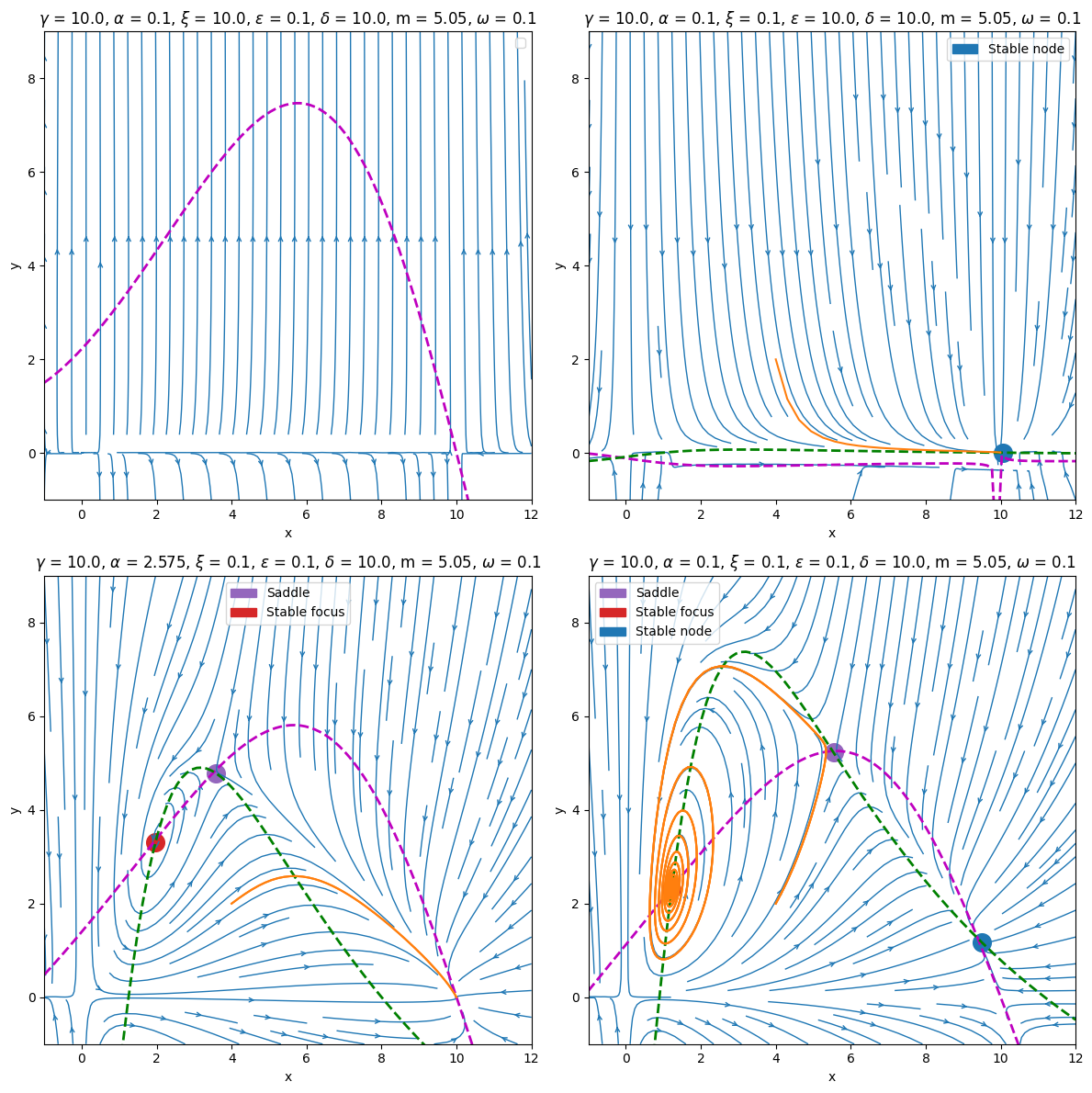}
	\caption{The stability nature of various equilibria of the system (\ref{4mid}).}
	\label{equiplot4mid}
\end{figure}

\section{Time-Optimal Control Studies} \label{sec:4midtimecontrol}

In this section, we formulate and characterise two time-optimal control problems with quality of additional food and quantity of additional food as control parameters respectively. We shall drive the system (\ref{4mid}) from the initial state $(x_0,y_0)$ to the final state $(\bar{x},\bar{y})$ in minimum time.

\subsection{Quality of Additional Food as Control Parameter}

We assume that the quantity of additional food $(\xi)$ is constant and the quality of additional food varies in $[\alpha_{\text{min}},\alpha_{\text{max}}]$. The time-optimal control problem with additional food provided prey-predator system involving Holling type-IV functional response among mutually interfering predators (\ref{4mid}) with quality of additional food ($\alpha$) as control parameter is given by

\begin{equation}
	\begin{rcases}
		& \displaystyle {\bf{\min_{\alpha_{\min} \leq \alpha(t) \leq \alpha_{\max}} T}} \\
		& \text{subject to:} \\
		& \frac{\mathrm{d} x}{\mathrm{d} t} = x \left(1-\frac{x}{\gamma} \right)- \frac{xy}{(1+\alpha \xi + \epsilon y)(\omega x^2 + 1) + x}, \\
		& \frac{\mathrm{d} y}{\mathrm{d} t} = \delta \left( \frac{x + \xi (\omega x^2 + 1)}{(1+\alpha \xi + \epsilon y)(\omega x^2 + 1) + x} \right) y - m y, \\
		& (x(0),y(0)) = (x_0,y_0) \ \text{and} \ (x(T),y(T)) = (\bar{x},\bar{y}).
	\end{rcases}
	\label{4midalpha0}
\end{equation}

This problem can be solved using a transformation on the independent variable $t$ by introducing an independent variable $s$ such that $\mathrm{d}t = ((1+\alpha \xi + \epsilon y)(\omega x^2 + 1) + x) \ \mathrm{d}s$. This transformation converts the time-optimal control problem ($\ref{4midalpha0}$) into the following linear problem.

\begin{equation}
	\begin{rcases}
		& \displaystyle {\bf{\min_{\alpha_{\min} \leq \alpha(t) \leq \alpha_{\max}} S}} \\
		& \text{subject to:} \\
		& \mathring{x}(s) = x \left(1 - \frac{x}{\gamma}\right) ((1+\alpha \xi + \epsilon y)(\omega x^2 + 1) + x) - x y, \\
		& \mathring{y}(s) = \delta (x + \xi (\omega x^2 + 1)) y - ((1+\alpha \xi + \epsilon y)(\omega x^2 + 1) + x) m y, \\
		& (x(0),y(0)) = (x_0,y_0) \ \text{and} \ (x(S),y(S)) = (\bar{x},\bar{y}).
	\end{rcases}
	\label{4midalpha}
\end{equation}

We now calculate the Hamiltonian function which encapsulates the system’s dynamics and optimality conditions \cite{liberzon2011calculus}. Hamiltonian function for this problem (\ref{4midalpha}) is given by
\begin{equation*}
	\begin{split}
		\mathbb{H}(s,x,y,p,q) =& p \left[x \left(1 - \frac{x}{\gamma}\right) ((1+\alpha \xi + \epsilon y)(\omega x^2 + 1) + x) - x y \right] \\ & + q \left[\delta (x + \xi (\omega x^2 + 1)) y - ((1+\alpha \xi + \epsilon y)(\omega x^2 + 1) + x) m y \right] \\ 
		=& \left[ p x \xi  \left(1 - \frac{x}{\gamma}\right) (\omega x^2 + 1) - q \xi m y (\omega x^2 + 1) \right] \alpha \\ &  + p \left[x \left(1 - \frac{x}{\gamma}\right) ((1+ \epsilon y)(\omega x^2 + 1) + x) - x y \right] \\ 
		& + q \left[\delta (x + \xi (\omega x^2 + 1)) y - ((1+ \epsilon y)(\omega x^2 + 1) + x) m y \right]. \\
	\end{split}
\end{equation*}

Here, $p$ and $q$ are costate variables satisfying the adjoint equations 

\begin{equation}
	\begin{split}
		\mathring{p} = \frac{\mathrm{d} p}{\mathrm{d} s} = -  \frac{\partial \mathbb{H}}{\partial x} =  & p \left[y - x \left(2- \frac{3x}{\gamma}\right) - (1+\alpha \xi + \epsilon y) \left(1 - \frac{2 x}{\gamma} + 3 \omega x^2 - \frac{4 \omega x^3}{\gamma}\right)\right] \\
		& + q y \left[m \left(2 \omega x (1+\alpha\xi+\epsilon y)+1\right) - \delta (1+2 \xi \omega x)\right], \\
		\mathring{q} = \frac{\mathrm{d} q}{\mathrm{d} s} = -  \frac{\partial \mathbb{H}}{\partial y} =  & p x \left[1 - \epsilon \left(\omega x^2 + 1 \right)\left(1 - \frac{x}{\gamma} \right) \right] \\
		& + q \left[ (m - \delta) x + (\omega x^2 + 1) (m (1+\alpha \xi + 2 \epsilon y) - \delta \xi)  \right].
	\end{split}
\end{equation}

Since Hamiltonian is a linear function in $\alpha$, the existence of optimal control is guarenteed by \textit{Filippov existence theorem} \cite{cesari2012optimization}.  Since we are minimizing the Hamiltonian, the optimal strategy will be a combination of bang-bang and singular controls which is given by 

\begin{equation}
	\alpha^*(t) =
	\begin{cases}
		\alpha_{\max}, &\text{ if } \frac{\partial \mathbb{H}}{\partial \alpha} < 0.\\
		\alpha_{\min}, &\text{ if } \frac{\partial \mathbb{H}}{\partial \alpha} > 0.
	\end{cases}
\end{equation}
where
\begin{equation}
	\frac{\partial \mathbb{H}}{\partial \alpha} = p x \xi  \left(1 - \frac{x}{\gamma}\right) (\omega x^2 + 1) - q \xi m y (\omega x^2 + 1).
\end{equation}

This problem (\ref{4midalpha}) admits a singular solution if there exists an interval $[s_1,s_2]$ on which $\frac{\partial \mathbb{H}}{\partial \alpha} = 0$. Therefore, the solution is a combination of bang-bang and singular controls.

\subsection{Quantity of Additional Food as Control Parameter}

We assume that the quality of additional food $(\alpha)$ is constant and the quantity of additional food varies in $[\xi_{\text{min}},\xi_{\text{max}}]$. The time-optimal control problem with additional food provided prey-predator system involving Holling type-IV functional response among mutually interfering predators (\ref{4mid}) with quantity of additional food ($\xi$) as control parameter is given by

\begin{equation}
	\begin{rcases}
		& \displaystyle {\bf{\min_{\xi_{\min} \leq \xi(t) \leq \xi_{\max}} T}} \\
		& \text{subject to:} \\
		& \frac{\mathrm{d} x}{\mathrm{d} t} = x \left(1-\frac{x}{\gamma} \right)- \frac{xy}{(1+\alpha \xi + \epsilon y)(\omega x^2 + 1) + x}, \\
		& \frac{\mathrm{d} y}{\mathrm{d} t} = \delta \left( \frac{x + \xi (\omega x^2 + 1)}{(1+\alpha \xi + \epsilon y)(\omega x^2 + 1) + x} \right) y - m y, \\
		& (x(0),y(0)) = (x_0,y_0) \ \text{and} \ (x(T),y(T)) = (\bar{x},\bar{y}).
	\end{rcases}
	\label{4midxi0}
\end{equation}

This problem can be solved using a transformation on the independent variable $t$ by introducing an independent variable $s$ such that $\mathrm{d}t = ((1+\alpha \xi + \epsilon y)(\omega x^2 + 1) + x) \ \mathrm{d}s$. This transformation converts the time-optimal control problem ($\ref{4midxi0}$) into the following linear problem.

\begin{equation}
	\begin{rcases}
		& \displaystyle {\bf{\min_{\xi_{\min} \leq \xi(t) \leq \xi_{\max}} S}} \\
		& \text{subject to:} \\
		& \frac{\mathrm{d} x}{\mathrm{d} t} = x \left(1-\frac{x}{\gamma} \right)- \frac{xy}{(1+\alpha \xi + \epsilon y)(\omega x^2 + 1) + x}, \\
		& \frac{\mathrm{d} y}{\mathrm{d} t} = \delta \left( \frac{x + \xi (\omega x^2 + 1)}{(1+\alpha \xi + \epsilon y)(\omega x^2 + 1) + x} \right) y - m y, \\
		& (x(0),y(0)) = (x_0,y_0) \ \text{and} \ (x(S),y(S)) = (\bar{x},\bar{y}).
	\end{rcases}
	\label{4midxi}
\end{equation}

We now calculate the Hamiltonian function which encapsulates the system’s dynamics and optimality conditions \cite{liberzon2011calculus}. Hamiltonian function for this problem (\ref{4midxi}) is given by
\begin{equation*}
	\begin{split}
		\mathbb{H}(s,x,y,p,q) =& p \left[x \left(1 - \frac{x}{\gamma}\right) ((1+\alpha \xi + \epsilon y)(\omega x^2 + 1) + x) - x y \right] \\ & + q \left[\delta (x + \xi (\omega x^2 + 1)) y - ((1+\alpha \xi + \epsilon y)(\omega x^2 + 1) + x) m y \right] \\ 
		=& \left[ p x \alpha \left(1 - \frac{x}{\gamma}\right) (\omega x^2 + 1) + q \delta y (\omega x^2 + 1) - q \alpha m y (\omega x^2 + 1) \right] \xi \\ &  + p \left[x \left(1 - \frac{x}{\gamma}\right) ((1+ \epsilon y)(\omega x^2 + 1) + x) - x y \right] \\ 
		& + q \left[\delta x y - ((1+ \epsilon y)(\omega x^2 + 1) + x) m y \right]. \\
	\end{split}
\end{equation*}

Here, $p$ and $q$ are costate variables satisfying the adjoint equations 

\begin{equation}
	\begin{split}
		\mathring{p} = \frac{\mathrm{d} p}{\mathrm{d} s} = -  \frac{\partial \mathbb{H}}{\partial x} =  & p \left[y - x \left(2- \frac{3x}{\gamma}\right) - (1+\alpha \xi + \epsilon y) \left(1 - \frac{2 x}{\gamma} + 3 \omega x^2 - \frac{4 \omega x^3}{\gamma}\right)\right] \\
		& + q y \left[m \left(2 \omega x (1+\alpha\xi+\epsilon y)+1\right) - \delta (1+2 \xi \omega x)\right], \\
		\mathring{q} = \frac{\mathrm{d} q}{\mathrm{d} s} = -  \frac{\partial \mathbb{H}}{\partial y} =  & p x \left[1 - \epsilon \left(\omega x^2 + 1 \right)\left(1 - \frac{x}{\gamma} \right) \right] \\
		& + q \left[ (m - \delta) x + (\omega x^2 + 1) (m (1+\alpha \xi + 2 \epsilon y) - \delta \xi)  \right].
	\end{split}
\end{equation}

Since Hamiltonian is a linear function in $\xi$, the existence of optimal control is guarenteed by \textit{Filippov existence theorem} \cite{cesari2012optimization}.  Since we are minimizing the Hamiltonian, the optimal strategy will be a combination of bang-bang and singular controls which is given by 

\begin{equation}
	\xi^*(t) =
	\begin{cases}
		\xi_{\max}, &\text{ if } \frac{\partial \mathbb{H}}{\partial \xi} < 0.\\
		\xi_{\min}, &\text{ if } \frac{\partial \mathbb{H}}{\partial \xi} > 0.
	\end{cases}
\end{equation}
where
\begin{equation}
	\frac{\partial \mathbb{H}}{\partial \xi} = p x \alpha \left(1 - \frac{x}{\gamma}\right) (\omega x^2 + 1) + q \delta y (\omega x^2 + 1) - q \alpha m y (\omega x^2 + 1).
\end{equation}

This problem (\ref{4midxi}) admits a singular solution if there exists an interval $[s_1,s_2]$ on which $\frac{\partial \mathbb{H}}{\partial \xi} = 0$. Therefore, the solution is a combination of bang-bang and singular controls.

\subsection{Applications to Pest Management}

In this subsection, we simulated the time-optimal control problems (\ref{4midalpha}) and (\ref{4midxi}) using CasADi in python \cite{CasADi}. We implemented the direct transcription method with multiple shooting in order to solve the time-optimal control problems. In this method, we discretize the control problem into smaller intervals using finite difference integration, specifically the fourth-order Runge-Kutta (RK4) method. By breaking the trajectory into multiple shooting intervals, the state and control variables at each node are treated as optimization variables. The dynamics of the system are enforced as constraints between nodes, allowing for greater flexibility and improved convergence when solving the nonlinear programming problem with CasADi’s solvers.

\begin{figure}[!ht]
	\centering
	\includegraphics[width=\textwidth]{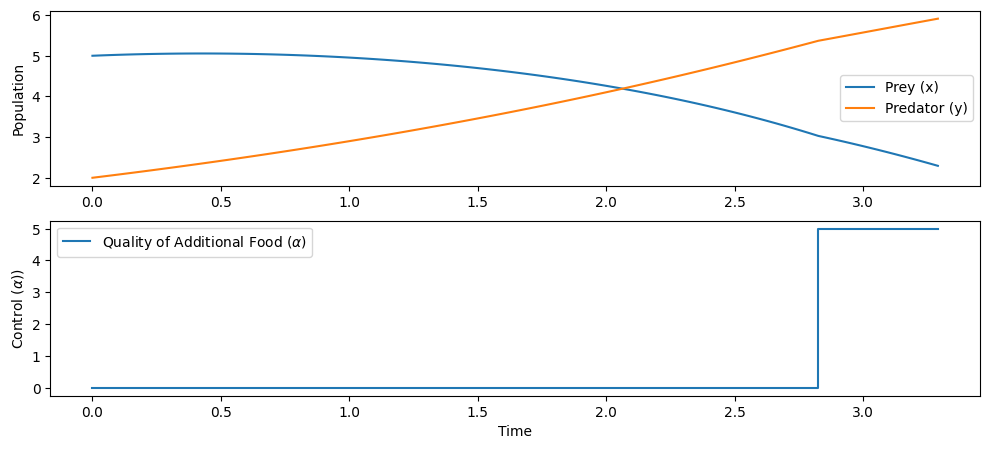}
	\caption{The optimal state trajectories and the optimal control trajectories for the time optimal control problem (\ref{4midalpha}).}
	\label{c4midalpha}
\end{figure}

\begin{figure}[!ht]
	\centering
	\includegraphics[width=\textwidth]{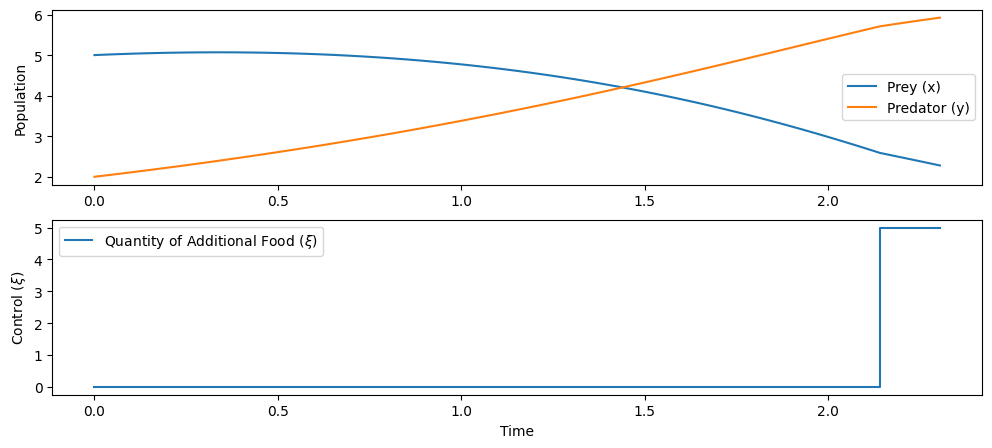}
	\caption{The optimal state trajectories and the optimal control trajectories for the time optimal control problem (\ref{4midxi}).}
	\label{c4midxi}
\end{figure}

\autoref{c4midalpha} illustrates the optimal state and control trajectories for the time-optimal control problem (\ref{4midalpha}). The simulation uses the parameter values $\gamma = 8.0,\  \xi = 0.1,\ \delta = 0.96, \ m=0.3,\ \epsilon = 0.1,\ \omega = 0.1$, starting from the initial point $(5,2)$ and reaching the final point $(1,4)$ in an optimal time of $3.29$ units. 

\autoref{c4midxi} illustrates the optimal state and control trajectories for the time-optimal control problem (\ref{4midxi}). The simulation uses the parameter values $\gamma = 10.0,\  \alpha = 0.2,\ \delta = 0.96, \ m=0.3,\ \epsilon = 0.1,\ \omega = 0.01$, starting from the initial point $(5,2)$ and reaching the final point $(1,4)$ in an optimal time of $2.30$ units. 

\section{Stochastic Model Formulation} \label{sec:4mismodel}

We derived the following prey-predator system involving Holling type-IV functional response and mutual interference among predators in (\ref{4midtemp}).
\begin{equation*}
	\begin{split}
		\frac{\mathrm{d}N}{\mathrm{d}T} & = r N \left(1-\frac{N}{K}\right) -  \frac{cNP}{(bN^2+1)(a+\alpha \eta A+ \epsilon_1 a P)+N}, \\
		\frac{\mathrm{d}P}{\mathrm{d}T} & = \frac{\delta_1 \left(N+\eta A(bN^2+1)\right) P}{(bN^2+1)(a+\alpha \eta A+ \epsilon_1 a P)+N} - m_1 P. \\
	\end{split}
\end{equation*}

In order to make the prey-predator system more realistic, we incorporated the environmental fluctuations by perturbing the prey growth rate and predator death rate by noises. 
\begin{equation} \label{noise4mis}
	r \rightarrow r + \sigma_1 dW_1(t),\  m \rightarrow m +\sigma_2 dW_2(t).
\end{equation}

where $W_i(t) \ (i = 1, 2)$ are the mutually independent standard Brownian motions with $ W_i(0) = 0$ and  $\sigma_1$ and $\sigma_2$ are positive constants and they represent the intensities of the white noise. Then the deterministic system (\ref{4mid}) is modified to the following stochastic system.

Then the deterministic system (\ref{4mid}) is modified and non-dimensionalized to the following stochastic system.
\begin{equation}\label{4mis0}
	\begin{split}
		\mathrm{d} x(t) & = x(t) \left[ 1-\frac{x}{\gamma} - \frac{y}{(1+\alpha \xi+\epsilon y)(\omega x^2 + 1) + x}\right] \mathrm{d}t + \sigma_1 x(t) \mathrm{d}W_1(t), \\
		\mathrm{d} y(t) & = y(t) \left[\frac{\delta(x + \xi (\omega x^2 + 1))}{(1 + \alpha \xi + \epsilon y)(\omega x^2 + 1) + x} - m \right] \mathrm{d}t + \sigma_2 y(t) \mathrm{d}W_2(t). \\
	\end{split}
\end{equation}

Also, the system can go through huge, occasionally catastrophic disturbances. Since white noise is a continuous noise, it cannot capture sudden environmental changes. To cater to these, we also apply a discontinuous stochastic process as compensated Poisson process to model these abrupt natural phenomenon as in \cite{la2010dynamics, Levyjumps}. We now perturb $r$ and $m_1$ with discontinuous noise in addition to the continuous white noise. So, we have
\begin{equation} \label{4misnoise1}
	r \rightarrow r + \sigma_1 \mathrm{d}W_1(t) + \int_{\mathbb{Y}}^{} \gamma_1 (v) \, \widetilde{N} (\mathrm{d}t,\mathrm{d}v) ,\  -m_1 \rightarrow - m_1 + \sigma_2 \mathrm{d}W_2(t) + \int_{\mathbb{Y}}^{} \gamma_2 (v) \, \widetilde{N} (\mathrm{d}t,\mathrm{d}v).
\end{equation}

According to the L\'evy decomposition theorem \cite{oksendal2005stochastic}, we have
$\widetilde{N} (t,dv) = N(t,dv) - \lambda (dv)t$, where $\widetilde{N}(t,dv)$ is a compensated Poisson process and N is a Poisson counting measure with characteristic measure $\lambda$ on a measurable subset $\mathbb{Y}$ of $(0,+\infty)$ with $\lambda(\mathbb{Y}) < \infty$. The distribution of L\'evy jumps L$_i$(t) can be completely parameterized by $(a_i,\sigma_i,\lambda)$ and satisfies the property of infinite divisibility.

Then the deterministic system is modified to the following system of stochastic differential equations: 

\begin{equation}\label{4mis}
	\begin{split}
		\mathrm{d} x(t) =& x(t) \left[ 1-\frac{x}{\gamma} - \frac{y}{(1+\alpha \xi+\epsilon y)(\omega x^2 + 1) + x}\right] \mathrm{d}t \\
		& + \sigma_1 x(t) \mathrm{d}W_1(t) + x(t) \int_{\mathbb{Y}}^{} \gamma_1 (v) \, \widetilde{N} (\mathrm{d}t,\mathrm{d}v), \\
		\mathrm{d} y(t) =& y(t) \left[\frac{\delta(x + \xi (\omega x^2 + 1))}{(1 + \alpha \xi + \epsilon y)(\omega x^2 + 1) + x} - m \right] \mathrm{d}t \\
		&+ \sigma_2 y(t) \mathrm{d}W_2(t) + y(t) \int_{\mathbb{Y}}^{} \gamma_2 (v) \, \widetilde{N} (\mathrm{d}t,\mathrm{d}v).  \\
	\end{split}
\end{equation}

\begin{thm} \label{4misth1}
	For any given initial value $X(\theta) = (x(\theta),y(\theta)) \in C([-\tau_0,0],\mathbb{R}^{+^2})$, there exists a unique positive global solution $(x(t),y(t))$ of system (\ref{4mis}) on $t \geq 0$.
\end{thm}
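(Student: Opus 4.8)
The plan is to follow the standard two-step scheme for jump-diffusion population models: first produce a unique local solution from local Lipschitz continuity of the coefficients, then exclude finite-time explosion and loss of positivity by a Lyapunov-function/stopping-time argument. For the local step, note that the drift and diffusion coefficients of (\ref{4mis}) are rational in $(x,y)$ with denominator $D(x,y)=(1+\alpha\xi+\epsilon y)(\omega x^2+1)+x$, which is bounded away from zero on every compact subset of the open quadrant $\mathbb{R}_+^2$, while the jump coefficients $x\gamma_1(v)$ and $y\gamma_2(v)$ are linear in the state. Hence all coefficients are locally Lipschitz, and standard existence–uniqueness theory for SDEs driven by Brownian motion and a compensated Poisson random measure \cite{oksendal2005stochastic} yields a unique solution $(x(t),y(t))$ on a maximal interval $[0,\tau_e)$. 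It remains only to show $\tau_e=\infty$ almost surely and that the trajectory stays in $\mathbb{R}_+^2$.

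For the global step I would fix $n_0$ so large that $(x_0,y_0)\in[1/n_0,n_0]^2$ and, for $n\ge n_0$, introduce the exit times
\begin{equation*}
\tau_n=\inf\Big\{t\in[0,\tau_e):\ x(t)\notin(1/n,n)\ \text{or}\ y(t)\notin(1/n,n)\Big\},
\end{equation*}
so that $\tau_n\uparrow\tau_\infty\le\tau_e$. I will use the $C^2$ Lyapunov function
\begin{equation*}
V(x,y)=(x-1-\ln x)+\tfrac{1}{\delta}\,(y-1-\ln y),
\end{equation*}
which is nonnegative on $\mathbb{R}_+^2$ and blows up whenever $x$ or $y$ tends to $0$ or to $\infty$; a single function thus controls both explosion and extinction. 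Applying the Itô formula for jump-diffusions and using $V_x=1-1/x$, $V_y=\tfrac{1}{\delta}(1-1/y)$, the diffusion part of the generator collapses to $(x-1)\phi_1+\tfrac{1}{\delta}(y-1)\phi_2+\tfrac{\sigma_1^2}{2}+\tfrac{\sigma_2^2}{2\delta}$, where $\phi_1,\phi_2$ are the per-capita deterministic rates in (\ref{4mis}); the non-local part contributes the constant
\begin{equation*}
\int_{\mathbb{Y}}\big[\gamma_1(v)-\ln(1+\gamma_1(v))\big]\,\lambda(\mathrm{d}v)+\tfrac{1}{\delta}\int_{\mathbb{Y}}\big[\gamma_2(v)-\ln(1+\gamma_2(v))\big]\,\lambda(\mathrm{d}v),
\end{equation*}
which is finite under the structural assumptions $\gamma_i(v)>-1$ and $\int_{\mathbb{Y}}[\gamma_i-\ln(1+\gamma_i)]\,\mathrm{d}\lambda<\infty$. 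Since $y/D\le 1/\epsilon$ and $\delta(x+\xi(\omega x^2+1))/D$ is bounded for all $x$, the term $(x-1)\phi_1$ is dominated by $-x^2/\gamma$ for large $x$ and $\tfrac{1}{\delta}(y-1)\phi_2$ by $-\tfrac{m}{\delta}y$ for large $y$, so the whole expression is bounded above and there is a constant $K>0$ with $\mathcal{L}V\le K\le K(1+V)$.

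To conclude, Dynkin's formula at $t\wedge\tau_n$ together with Gronwall's inequality gives
\begin{equation*}
\mathbb{E}\,V\big(x(t\wedge\tau_n),y(t\wedge\tau_n)\big)\le\big(V(x_0,y_0)+KT\big)e^{KT},\qquad t\le T,
\end{equation*}
a bound independent of $n$. If $\mathbb{P}(\tau_\infty\le T)>0$ for some finite $T$, then on $\{\tau_n\le T\}$ at least one coordinate equals $1/n$ or $n$, whence $V(x(\tau_n),y(\tau_n))\ge\min\{\,n-1-\ln n,\ \tfrac{1}{n}-1+\ln n\,\}\to\infty$; letting $n\to\infty$ contradicts the uniform bound. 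Therefore $\tau_\infty=\infty$ (so $\tau_e=\infty$) almost surely, and because $V<\infty$ forces $x,y\in(0,\infty)$, the unique solution is global and remains in $\mathbb{R}_+^2$ for all $t\ge0$.

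The main obstacle is the non-local part of the generator: one must verify that the compensated Poisson integral reduces to a finite, state-independent constant rather than an unbounded contribution, which is precisely where the integrability condition on $\gamma_i-\ln(1+\gamma_i)$ and the positivity-preserving sign condition $\gamma_i(v)>-1$ are needed. The secondary difficulty is bounding the functional-response terms uniformly in the state so that $\mathcal{L}V$ does not grow with $(x,y)$; this follows from the uniform lower bound on $D$, but must be checked for both the prey consumption term and the predator conversion term.
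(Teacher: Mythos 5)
Your proposal is correct and is essentially the approach the paper intends: the paper gives no proof of Theorem \ref{4misth1} beyond the remark that it ``can be proved in similar lines to the proof in \cite{Levyjumps} using the Lyapunov method,'' and your argument (local Lipschitz coefficients for a unique maximal solution, the Lyapunov function $V(x,y)=(x-1-\ln x)+\tfrac{1}{\delta}(y-1-\ln y)$, stopping times, the jump-diffusion It\^o formula, Gronwall, and the explosion-time contradiction) is precisely the standard argument being referenced, carried out in full. The one point worth retaining from your write-up is that the conclusion genuinely requires the structural hypotheses $\gamma_i(v)>-1$ and $\int_{\mathbb{Y}}\bigl[\gamma_i(v)-\ln(1+\gamma_i(v))\bigr]\lambda(\mathrm{d}v)<\infty$ on the jump coefficients, which the paper never states explicitly but which your proof correctly identifies as the place where the non-local part of the generator must reduce to a finite constant.
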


\textbf{Note: }The above theorem for existence of solutions of (\ref{4mis}) can be proved in similar lines to the proof in \cite{Levyjumps} using the Lyapunov method.

\section{Stochastic Time-optimal Control Studies}\label{sec:4miscontrol}

A stochastic time-optimal control problem involves determining a control strategy that drives a dynamical system to a desired target state in the least expected time while accounting for randomness in the system. Unlike deterministic models, these systems are influenced by probabilistic elements such as random environmental disturbances or sudden unpredictable events, which are typically modeled using stochastic differential equations (SDEs). 

Our goal is to minimize the expected hitting time to a target state while keeping the system’s trajectory stable and within feasible bounds under uncertainty. However, in this work, we minimize not only the expected time it takes for the prey population to fall below a target threshold but also to penalize the overuse of control resources. Incorporating extra terms beyond just the final hitting time in the cost function is essential for maintaining practical viability and system stability. If the cost depended solely on minimizing the time to reach a target, the optimal control might become unrealistically aggressive or discontinuous. By adding penalty terms for the magnitude of control inputs, the optimization naturally discourages extreme or biologically infeasible control strategies. These additional terms promote smoother and more balanced controls, making the resulting policy both efficient and implementable in real-world scenarios, where excessive intervention may not be desirable or possible.

In this section, we solve the stochastic time-optimal control problem with the state space (\ref{4mis}) given by

\begin{align*}
	\begin{split}
		\mathrm{d} x(t) =& x(t) \left[ 1-\frac{x}{\gamma} - \frac{y}{(1+\alpha \xi+\epsilon y)(\omega x^2 + 1) + x}\right] \mathrm{d}t \\
		& + \sigma_1 x(t) \mathrm{d}W_1(t) + x(t) \int_{\mathbb{Y}}^{} \gamma_1 (v) \, \widetilde{N} (\mathrm{d}t,\mathrm{d}v), \\
		\mathrm{d} y(t) =& y(t) \left[\frac{\delta(x + \xi (\omega x^2 + 1))}{(1 + \alpha \xi + \epsilon y)(\omega x^2 + 1) + x} - m \right] \mathrm{d}t \\
		&+ \sigma_2 y(t) \mathrm{d}W_2(t) + y(t) \int_{\mathbb{Y}}^{} \gamma_2 (v) \, \widetilde{N} (\mathrm{d}t,\mathrm{d}v).  \\
	\end{split}
\end{align*}

Here, we consider both quality of additional food ($\alpha$) and quantity of additional food ($\xi$) as control parameters. We seek controls $(\alpha(t), \xi(t))$ minimizing the expected time \( T \) until the prey population falls below a target threshold $x_{\text{target}}$, subject to constraints:
$\alpha_{\text{min}} \leq \alpha(t) \leq \alpha_{\text{max}}, \quad \xi_{\text{min}} \leq \xi(t) \leq \xi_{\text{max}}.$

The cost functional is:
$$ J(\alpha(\cdot), \xi(\cdot)) = \mathbb{E}\left[ T + \int_0^T \left( \rho_{\alpha} \alpha^2(t) + \rho_{\xi} \xi^2(t) \right) dt \right],$$

where \( \rho_{\alpha}, \rho_{\xi} \) are small positive constants penalizing excessive control efforts.

The Hamiltonian \( H \) is:

\begin{align*}
	H(x, y, \alpha, \xi, p_1, p_2) &= \rho_{\alpha} \alpha^2 + \rho_{\xi} \xi^2 + p_1 x \left( 1-\frac{x}{\gamma} - \frac{y}{(1+\alpha \xi+\epsilon y)(\omega x^2 + 1) + x} \right) \\
	&\quad + p_2 y \left(\frac{\delta(x + \xi (\omega x^2 + 1))}{(1 + \alpha \xi + \epsilon y)(\omega x^2 + 1) + x} - m \right).
\end{align*}

where $p_1(t), p_2(t)$ are the adjoint (costate) processes.

The adjoint equations are:

\begin{align*}
	dp_1(t) &= -\left( \frac{\partial H}{\partial x} \right) dt, \\
	dp_2(t) &= -\left( \frac{\partial H}{\partial y} \right) dt,
\end{align*}
with terminal conditions:
$ p_1(T) = 0, \quad p_2(T) = 0.$

Explicitly, the partial derivatives are:

\begin{align*}
	\frac{\partial H}{\partial x} =& p_1 \left[ 1 - \frac{ 2 x}{\gamma} - \frac{(1 - \omega x^2) (1+\alpha \xi + \epsilon y) y }{\left( (1 + \alpha \xi + \epsilon y) (\omega x^2 + 1) + x \right)^2}\right]  + \delta p_2 y \frac{(1 - \omega x^2) (1+\alpha \xi + \epsilon y - \xi)}{\left( (1 + \alpha \xi + \epsilon y) (\omega x^2 + 1) + x \right)^2}, \\
	\frac{\partial H}{\partial y} =& \frac{-p_1 x \left((1+\alpha \xi) (\omega x^2 + 1) + x\right)}{\left( (1 + \alpha \xi + \epsilon y) (\omega x^2 + 1) + x \right)^2} + p_2 \left[ \frac{\delta (x+\xi (\omega x^2 + 1)) ((1 + \alpha \xi) (\omega x^2 + 1)+x)}{\left( (1 + \alpha \xi + \epsilon y) (\omega x^2 + 1) + x \right)^2} - m\right]
\end{align*}

The controls $( \alpha(t), \xi(t) )$ are updated by minimizing the Hamiltonian pointwise:

\begin{align*}
	\frac{\partial H}{\partial \alpha} =& 2 \rho_{\alpha} \alpha - \frac{\xi (\omega x^2 + 1)}{((1+\alpha \xi + \epsilon y)(\omega x^2 + 1)+x)^2} \\
	& \left[\delta p_2 y (x + \xi (\omega x^ 2 + 1)) - p_1 x y \right], \\
	\frac{\partial H}{\partial \xi} =& 2 \rho_{\xi} \xi + \frac{y (\omega x^2 + 1)}{((1+\alpha \xi + \epsilon y)(\omega x^2 + 1)+x)^2} \\
	& \left[\alpha p_1 x + \delta p_2 \left((1-\alpha) x + (1 + \epsilon y) (\omega x^2 + 1)\right)\right].
\end{align*}

The optimal control values are the solutions of the above equations equating to zero. We now numerically simulate this stochastic time optimal control in Python using Monte Carlo estimation. This model includes two types of stochastic disturbances: continuous and jump-type. The continuous noise is modeled by Brownian motion, capturing small, persistent environmental fluctuations such as temperature or resource variability. The jump-type noise is modeled using compensated Poisson processes, representing sudden events like natural disasters or abrupt food supply changes. Theoretically, these are incorporated via diffusion and jump terms in the (\ref{4mis}). In numerical simulations in Python, Brownian noise is implemented using scaled normal distributions (np.random.randn()), while Poisson jumps are simulated using random samples from Poisson distributions (np.random.poisson()), with compensation to maintain zero mean. Both types of noise enrich the realism and robustness of the model.

We implemented the Monte Carlo average to the solutions of this stochastic time optimal control problem. A Monte Carlo average is a statistical method that estimates expected outcomes by running a model repeatedly with random inputs and averaging the results. In the context of stochastic systems, it allows for approximation of probabilistic quantities, such as expected time to reach a target or mean trajectory behavior. Each simulation represents a possible realization of the system under uncertainty, and the average over many such paths provides a reliable estimate of the true expected outcome. This method is particularly valuable when analytical solutions are intractable due to the system’s complexity or randomness.

Using a Monte Carlo average over $1000$ simulations in this model ensures statistical reliability in estimating the expected time to reach the target predator level. Given the stochastic nature of the prey-predator dynamics, each run yields a different trajectory due to random fluctuations and jumps. A single run is not representative; averaging over a large number of simulations smooths out random variation and yields a robust estimate of performance under uncertainty. The choice of $1000$ simulations strikes a balance between computational efficiency and statistical accuracy, offering a high-confidence approximation of the system’s expected behavior under optimal control.

At each simulation, the Gradient descent updates in discretized time are given by

$$\alpha^{\text{new}} = \text{Proj}_{[\alpha_{\min}, \alpha_{\max}]}\left( \alpha - \eta_1 \left( \frac{\partial H}{\partial \alpha} \right) \right),$$

$$\xi_{\text{new}} = \text{Proj}_{[\xi_{\min}, \xi_{\max}]}\left( \xi - \eta_2 \left( \frac{\partial H}{\partial \xi} \right) \right),$$

where \( \eta_1, \eta_2 \) are learning rates, and \( \text{Proj} \) denotes projection onto the admissible control bounds. The simulation stops at the first hitting time \( T \) when: $x(T) \leq x_{\text{target}}.$ 

\autoref{Control4mis} illustrates the optimal state and control trajectories for the stochastic time-optimal control problem. Using parameter values $\gamma = 10.0,\ \epsilon = 0.1,\ \delta = 11.0,\ m = 5.05,\ \omega = 0.1$, the system evolves from an initial state of $(50, 10)$ to a final state of approximately $(1, 70)$. The figure displays the first 10 trajectories from a total of 1,000 Monte Carlo simulations, with the solid curves representing the mean behavior of the populations and control variables over all simulations. The only terminal constraint enforced in this setup is that the prey population must reach the target value of 1. This simulation demonstrates a prey eradication strategy, which is particularly relevant in ecological control and pest management applications, where minimizing prey presence in minimal time is a critical objective.

\begin{figure}[!ht]
	\centering
	\includegraphics[width=\textwidth]{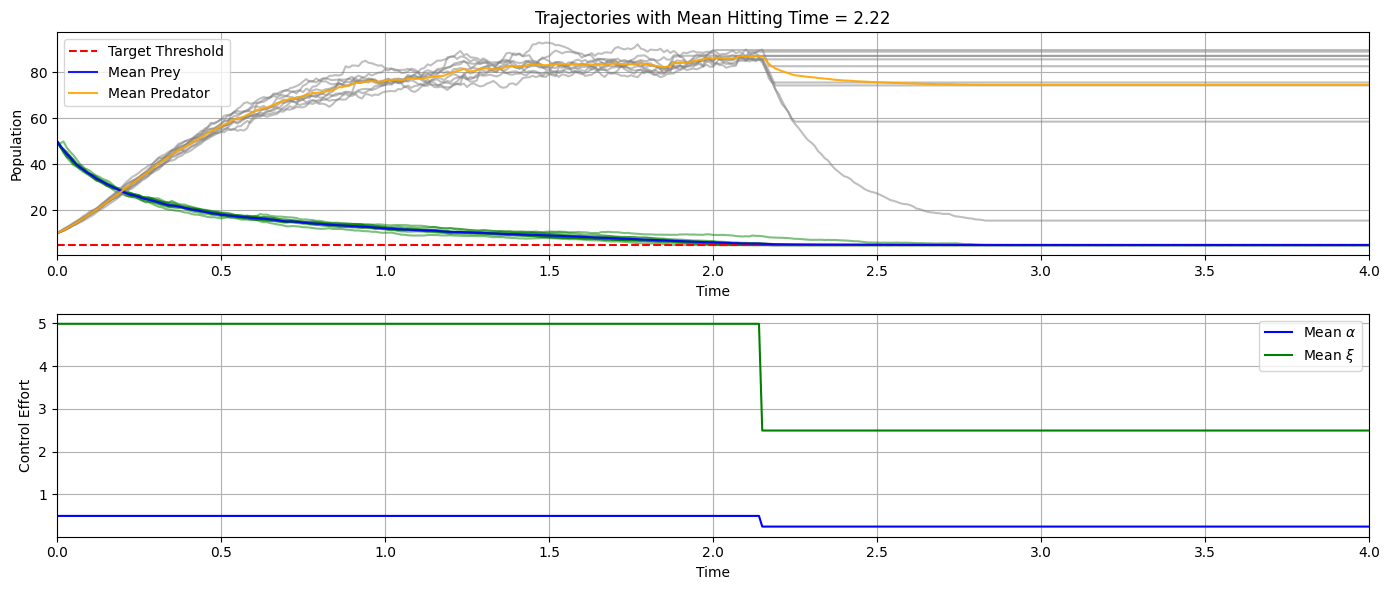}
	\caption{The optimal state trajectories and the optimal control trajectories for the stochastic time optimal control problem with (\ref{4mis}).}
	\label{Control4mis}
\end{figure}

\begin{figure}[!ht]
	\centering
	\includegraphics[width=\textwidth]{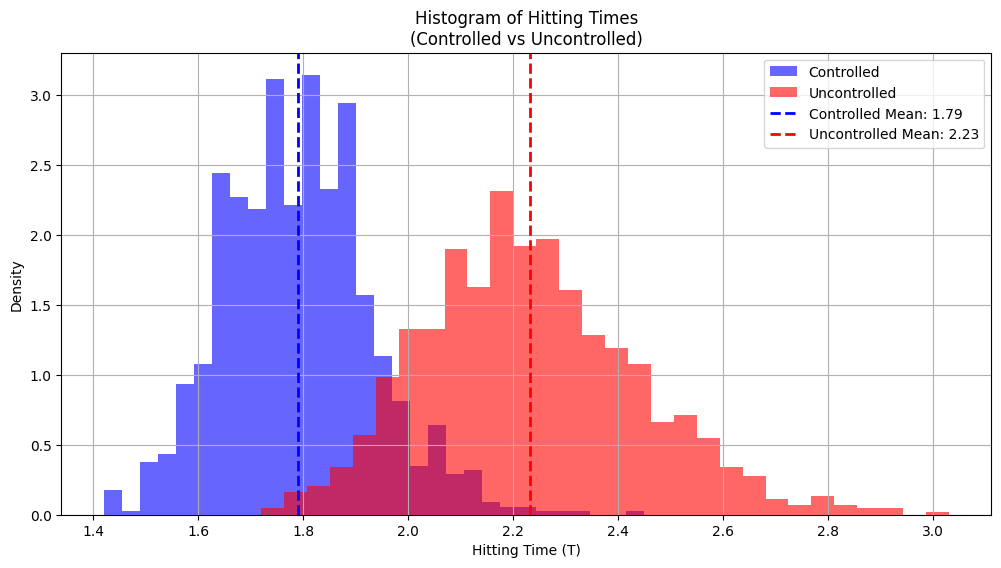}
	\caption{The density of hitting times of the controlled system in comparison with the uncontrolled system.}
	\label{Histogram4mis}
\end{figure}

\begin{figure}[!ht]
	\centering
	\includegraphics[width=\textwidth]{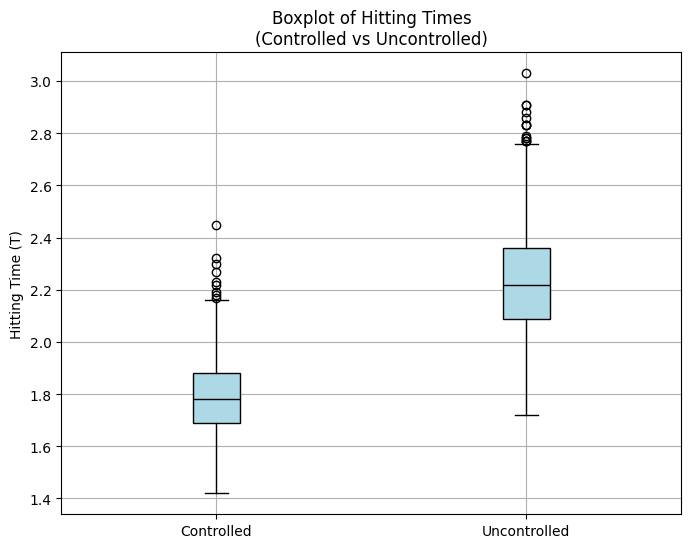}
	\caption{The box plots representing the distribution of hitting times of the controlled system in comparison with the uncontrolled system.}
	\label{Box4mis}
\end{figure}

\begin{table}[h!]
	\centering
	\begin{tabular}{|c|c|c|}
		\hline
		\textbf{Case} & \textbf{Mean Hitting Time} & \textbf{95\% Confidence Interval} \\
		\hline
		Controlled & 1.79 & (1.78, 1.80) \\
		Uncontrolled & 2.23 & (2.22, 2.24) \\
		\hline
	\end{tabular}
	\caption{Comparison of mean hitting times and 95\% confidence intervals for controlled and uncontrolled scenarios.}
	\label{4mid_hitting_times}
\end{table}

The histogram in \autoref{Histogram4mis} and the box plot in \autoref{Box4mis} reveals a clear distinction between the controlled and uncontrolled scenarios in terms of the time it takes for the predator population to reach the target threshold. The distribution of hitting times under control is skewed toward lower values, with a significantly lower mean compared to the uncontrolled case. This indicates that applying optimal control—through the quality and quantity of additional food—effectively accelerates the system toward the desired state. In contrast, without intervention, the system requires substantially more time on average to reach the same target, with wider variability and longer tails in the distribution, as evidenced by the broader histogram and boxplot spread. The narrower interquartile range in the boxplot for the controlled case further indicates that the control not only shortens the expected time but also stabilizes the process, leading to more predictable outcomes. This supports the conclusion that well-designed controls significantly improve both performance and robustness in stochastic ecological systems.

In order to prove that the controlled and uncontrolled samples are not from the same distribution, we calculate the p-vlaue of these two distributions using the Mann–Whitney U test. By comparing the hitting times of the controlled and uncontrolled stochastic systems, we got a p-value of approximately $3.27 \times 10^{-8}$. This value is vanishingly small, far below any conventional significance threshold, providing overwhelming evidence against the null hypothesis. The means of controlled and uncontrolled systems are detailed with the confidence intervals in \autoref{4mid_hitting_times}. The difference in means between the two cases highlights the efficiency and impact of the control strategy. In practical terms, such a result strongly supports the conclusion that the control strategy—based on additional food quantity and quality—significantly reduces the time required for the system to reach the target state. The result not only confirms the statistical significance of the observed improvement but also highlights the robustness and consistency of the control mechanism across stochastic scenarios.

\section{Discussions and Conclusions} \label{sec:disc}

\indent In this paper, we derived the functional response incorporating additional food, prey defence, and mutual interference, and formulated the corresponding prey-predator model. After proving the positivity and boundedness of the solutions of the proposed system, we investigated the conditions for the existence of various equilibria. Later, We presented the local stability analysis of these equilibria and explored the possible local bifurcations exhibited by the model, both analytically and numerically. We further studied the global dynamics of the system in the parameter space defined by the quality and quantity of additional food and provided a detailed analysis of the consequences of providing additional food. In the next part, We presented theoretical and numerical results on time-optimal control problems, using the quality or quantity of additional food as control parameters. These works are further extended to the stochastic system with both continuous and discrete noises. We then formulated the time-optimal control problem and provided analytical and numerical solutions under the influence of two controls and two types of noise. 

These results not only provide a rigorous theoretical understanding of the proposed prey-predator system but also have significant implications for real-world pest management strategies. Biological control (or biocontrol) of pests, which involves the use of natural enemies such as predators, parasitoids, or pathogens to suppress pest populations, offers a sustainable and environmentally friendly alternative to chemical pesticides. Our findings reinforce that the provision of additional food to natural enemies plays a crucial role in enhancing the effectiveness of biocontrol. In particular, we analyzed the role of additional food across four distinct scenarios, ranging from the most effective to the least favorable strategies, to evaluate their potential in achieving long-term pest suppression. 

From section \ref{sec:4midconseq}, we observed that the regions $A_{11},\ A_{21},\ A_{31}$ exhibit bistability, where both pest-free and interior equilibria are stable. In these regions, carefully selecting the initial conditions and appropriately provisioning additional food can steer the system toward the pest-free equilibrium, which is the most desirable outcome for pest management. The regions $A_{12},\ A_{13},\ A_{22},\ A_{23},\ A_{32},\ A_{33}$ are next in desirability, as they support only interior equilibria as stable outcomes. Although pest eradication is not achieved here, the pest population can be maintained at sufficiently low levels that prevent significant crop damage, making this an effective strategy in practice.

The regions $A_{14},\ A_{24},\ A_{34}$ are less favorable, as they permit both predator-free and interior equilibria to be stable. In such cases, pest suppression depends heavily on initial conditions. With careful selection of the starting point and proper food provisioning, it may still be possible to limit crop damage, though the risk of pest outbreaks is higher. The least desirable regions are $A_{15},\ A_{25},\ A_{35}$, where the only stable equilibrium is pest-dominant. Here, the provision of additional food inadvertently promotes pest persistence and even natural enemy extinction, highlighting the dangers of indiscriminate food supplementation.

To conclude, although additional food is non-reproductive, its arbitrary or excessive provision can lead to unintended and adverse ecological consequences, including pest dominance and collapse of the predator population. Importantly, the deterministic and stochastic optimal control studies presented in this work offer valuable guidance for steering the system toward desired equilibrium states by optimizing food quality, quantity, and timing. These results emphasize that successful pest management must be rooted in a careful balance of ecological insight and control strategy design, rather than simplistic or uniform interventions. These insights contribute meaningfully to the design of sustainable and optimized pest management practices.

\subsection*{Financial Support: }
This research was supported by National Board of Higher Mathematics (NBHM), Government of India (GoI) under project grant - {\bf{Time Optimal Control and Bifurcation Analysis of Coupled Nonlinear Dynamical Systems with Applications to Pest Management, \\ Sanction number: (02011/11/2021NBHM(R.P)/R$\&$D II/10074).}}

\subsection*{Conflict of Interests Statement: }
The authors have no conflicts of interest to disclose.

\subsection*{Ethics Statement:} 
This research did not required ethical approval.

\subsection*{Acknowledgments}
The authors dedicate this paper to the founder chancellor of SSSIHL, Bhagawan Sri Sathya Sai Baba. The contributing author also dedicates this paper to his loving elder brother D. A. C. Prakash who still lives in his heart.


\bibliographystyle{plain} 
\bibliography{reference}

\begin{thebibliography}{10}

\bibitem{ANT4Quality}
V~S Ananth and D.~K.~K. Vamsi.
\newblock Achieving {{Minimum-Time Biological Conservation}} and {{Pest
  Management}} for {{Additional Food}} provided {{Predator}}--{{Prey Systems}}
  involving {{Inhibitory Effect}}: {{A Qualitative Investigation}}.
\newblock 70(1):5.

\bibitem{ANT4Quantity}
V.~S. Ananth and D.~K.~K. Vamsi.
\newblock An {{Optimal Control Study}} with {{Quantity}} of {{Additional}} food
  as {{Control}} in {{Prey-Predator Systems}} involving {{Inhibitory Effect}}.
\newblock 9(1):114--145.

\bibitem{ANT3Quality}
V.~S. Ananth and D.~K.~K. Vamsi.
\newblock {{Time Optimal Control Studies and Sensitivity Analysis of Additional
  Food Provided Prey}}--{{Predator Systems Involving Holling Type III
  Functional Response based on Quality of Additional Food}}.
\newblock 31(01):271--308.

\bibitem{ANT3Quantity}
{Ananth V. S.} and {Vamsi D. K. K.}
\newblock Influence of quantity of additional food in achieving biological
  conservation and pest management in minimum-time for prey-predator systems
  involving {{Holling}} type {{III}} response.
\newblock 7(8):e07699.

\bibitem{CasADi}
Joel A~E Andersson, Joris Gillis, Greg Horn, James~B Rawlings, and Moritz
  Diehl.
\newblock {CasADi} -- {A} software framework for nonlinear optimization and
  optimal control.
\newblock {\em Mathematical Programming Computation}, 11(1):1--36, 2019.

\bibitem{Stochastic6}
Mustapha Belabbas, Abdelghani Ouahab, and Fethi Souna.
\newblock Rich dynamics in a stochastic predator-prey model with protection
  zone for the prey and multiplicative noise applied on both species.
\newblock 106(3):2761--2780.

\bibitem{cesari2012optimization}
Lamberto Cesari.
\newblock {\em Optimization—theory and applications: problems with ordinary
  differential equations}, volume~17.
\newblock Springer Science and Business Media, 2012.

\bibitem{AAAA1}
Anal Chatterjee, Muhammad~Aqib Abbasi, E.~Venturino, Jin Zhen, and Mainul
  Haque.
\newblock A {{Predator-Prey}} model with {{Prey Refuse}}: Under a stochastic
  and deterministic environment.

\bibitem{V3EarthSystems}
Bishal Chhetri, Deva Siva Sai~Murari Kanumoori, and D.~K.~K. Vamsi.
\newblock Influence of gestation delay and the role of additional food in
  holling type {{III}} predator--prey systems: A qualitative and quantitative
  investigation.
\newblock 7(2):897--915.

\bibitem{sss2}
Awad El-Gohary and Fawzy~A Bukhari.
\newblock Optimal control of stochastic prey--predator models.
\newblock {\em Applied mathematics and computation}, 146(2-3):403--415, 2003.

\bibitem{evans1983introduction}
Lawrence~C Evans.
\newblock An introduction to mathematical optimal control theory version 0.2.
\newblock {\em Lecture notes available at http://math. berkeley. edu/\~{}
  evans/control. course. pdf}, 1983.

\bibitem{fleming2012deterministic}
Wendell~H Fleming and Raymond~W Rishel.
\newblock {\em Deterministic and stochastic optimal control}, volume~1.
\newblock Springer Science \& Business Media, 2012.

\bibitem{AAAA2}
Ayt\"ul G\"ok\c~ce.
\newblock A {{Mathematical Modeling Approach}} to {{Analyse}} the {{Effect}} of
  {{Additional Food}} in a {{Predator-Prey Interactions}} with a {{White
  Gaussian Noise}} in {{Prey}}'s {{Growth Rate}}.
\newblock 8(1):21.

\bibitem{sss1}
XD~Gu and WQ~Zhu.
\newblock Stochastic optimal control of predator--prey ecosystem by using
  stochastic maximum principle.
\newblock {\em Nonlinear Dynamics}, 85(2):1177--1184, 2016.

\bibitem{Stochastic7}
Xiaoxia Guo and Dehan Ruan.
\newblock Extinction and ergodic stationary distribution of a
  {{Markovian-switching}} prey-predator model with additional food for
  predator.
\newblock 15:46.

\bibitem{Theory8}
U.~G. Haussmann, W.~J. Anderson, and A.~Boyarsky.
\newblock A {{New Stochastic Time Optimal Control Problem}}.
\newblock 16(1):1--15.

\bibitem{howard1998gronwall}
Ralph Howard.
\newblock The gronwall inequality.
\newblock {\em lecture notes}, 1998.

\bibitem{Islam2024}
Sarijul Islam and Md~Sabiar Rahman.
\newblock Dynamics of cannibalism in the predator in a prey refuge functional
  response dependent predator-prey model, 2024.

\bibitem{Levyjumps2}
Wantao Jia, Yong Xu, Dongxi Li, and Rongchun Hu.
\newblock Stochastic {{Analysis}} of {{Predator}}--{{Prey Models}} under
  {{Combined Gaussian}} and {{Poisson White Noise}} via {{Stochastic Averaging
  Method}}.
\newblock 23(9):1208.

\bibitem{Theory10}
Yong~Han Kang and Jin-Mun Jeong.
\newblock Time-{{Optimal Control}} for {{Semilinear Stochastic Functional
  Differential Equations}} with {{Delays}}.
\newblock 9(16):1956.

\bibitem{kot2001elements}
Mark Kot.
\newblock {\em Elements of mathematical ecology}.
\newblock Cambridge University Press, 2001.

\bibitem{la2010dynamics}
A~La~Cognata, D~Valenti, AA~Dubkov, and B~Spagnolo.
\newblock Dynamics of two competing species in the presence of l{\'e}vy noise
  sources.
\newblock {\em Physical Review E}, 82(1):011121, 2010.

\bibitem{Stochastic5}
Lin Li and Wencai Zhao.
\newblock Deterministic and stochastic dynamics of a modified {{Leslie-Gower}}
  prey-predator system with simplified {{Holling-type}} iv scheme.
\newblock 18(3):2813--2831.

\bibitem{liberzon2011calculus}
Daniel Liberzon.
\newblock {\em Calculus of variations and optimal control theory}.
\newblock Princeton university press, 2011.

\bibitem{TimeOptimalImpulse1}
Guangxuan Lin and Biao Zeng.
\newblock Minimal {{Time Impulse Control}} for a {{Class}} of {{Homogeneous
  Evolution Equations}}.
\newblock 30(3):29.

\bibitem{Harvesting1}
Meng Liu and Chuanzhi Bai.
\newblock Optimal harvesting policy for a stochastic predator--prey model.
\newblock 34:22--26.

\bibitem{lotka1925elements}
Alfred~James Lotka.
\newblock {\em Elements of physical biology}.
\newblock Williams and Wilkins, 1925.

\bibitem{Levyjumps}
Tingting Ma, Xinzhu Meng, and Zhengbo Chang.
\newblock Dynamics and optimal harvesting control for a stochastic
  one-predator-two-prey time delay system with jumps.
\newblock {\em Complexity}, 2019, 2019.

\bibitem{magro2004comparison}
SR~Magro and JRP Parra.
\newblock Comparison of artificial diets for rearing bracon hebetor say
  (hymenoptera: Braconidae).
\newblock {\em Biological Control}, 29(3):341--347, 2004.

\bibitem{OptimalControlANN}
M.~Mahmoudi and M.~E. Sanaei.
\newblock Solving optimal control problems governed by nonlinear {{PDEs}} using
  a multilevel method based on an artificial neural network.
\newblock 43(6):359.

\bibitem{AAAA5}
Sasanka~Shekhar Maity, Pankaj~Kumar Tiwari, and Samares Pal.
\newblock Comprehensive {{Analysis}} of {{Deterministic}} and {{Stochastic
  Eco-Epidemic Models Incorporating Fear}}, {{Refuge}}, {{Supplementary
  Resources}}, and {{Selective Predation Effects}}.
\newblock 191(1):5.

\bibitem{mcclure2011defensive}
Melanie McClure and Emma Despland.
\newblock Defensive responses by a social caterpillar are tailored to different
  predators and change with larval instar and group size.
\newblock {\em Naturwissenschaften}, 98(5):425--434, 2011.

\bibitem{metz2014dynamics}
Johan~A Metz and Odo Diekmann.
\newblock {\em The dynamics of physiologically structured populations},
  volume~68.
\newblock Springer, 2014.

\bibitem{AAAA3}
Bapin Mondal, Uttam Ghosh, Susmita Sarkar, and Pankaj~Kumar Tiwari.
\newblock A generalist predator--prey system with the effects of fear and
  refuge in deterministic and stochastic environments.
\newblock 225:968--991.

\bibitem{TimeOptimalStochastic}
Durga N, Muthukumar P, and Xianlong Fu.
\newblock Stochastic time-optimal control for time-fractional
  {{Ginzburg}}--{{Landau}} equation with mixed fractional {{Brownian}} motion.
\newblock 39(6):1144--1165.

\bibitem{Application1}
Vaskar Nepal, Mary~C. Fabrizio, Romain Lavaud, and Jaap Van Der~Meer.
\newblock Bioenergetic strategies contributing to the invasion success of blue
  catfish.
\newblock 496:110830.

\bibitem{nisio2015stochastic}
Makiko Nisio.
\newblock Stochastic control theory.
\newblock {\em ISI Lecture Notes}, 9, 2015.

\bibitem{Theory11}
V.N. Novoseltsev.
\newblock Time-optimal systems with random noise disturbances.
\newblock 1(2):114--121.

\bibitem{oksendal2013}
Bernt Oksendal.
\newblock {\em Stochastic differential equations: an introduction with
  applications}.
\newblock Springer Science \& Business Media, 2013.

\bibitem{oksendal2005stochastic}
Bernt {\O}ksendal and Agnes Sulem.
\newblock {\em Stochastic Control of jump diffusions}.
\newblock Springer, 2005.

\bibitem{perko2013differential}
Lawrence Perko.
\newblock {\em Differential equations and dynamical systems}, volume~7.
\newblock Springer Science and Business Media, 2013.

\bibitem{put2012type}
Kurt Put, Tim Bollens, Felix~L W{\"a}ckers, and Apostolos Pekas.
\newblock Type and spatial distribution of food supplements impact population
  development and dispersal of the omnivore predator macrolophus pygmaeus
  (rambur)(hemiptera: Miridae).
\newblock {\em Biological Control}, 63(2):172--180, 2012.

\bibitem{Type3-3}
Wen Qin, Hanjun Zhang, and Qingsong He.
\newblock Survival and ergodicity of a stochastic {{Holling-III}}
  predator--prey model with {{Markovian}} switching in an impulsive polluted
  environment.
\newblock 2021(1):80.

\bibitem{Fractional3}
Debdeep Roy and Bapan Ghosh.
\newblock Dimensionally homogeneous fractional order
  {{Rosenzweig}}--{{MacArthur}} model: A new perspective of paradox of
  enrichment and harvesting.
\newblock 112(20):18137--18161.

\bibitem{AAAA7}
Subarna Roy and Pankaj~Kumar Tiwari.
\newblock Bistability in a predator--prey model characterized by the
  {{Crowley}}--{{Martin}} functional response: {{Effects}} of fear, hunting
  cooperation, additional foods and nonlinear harvesting.
\newblock 228:274--297.

\bibitem{Type4-9}
Nawaj Sarif, Sahabuddin Sarwardi, and Hany~A. Hosham.
\newblock Global bifurcation analysis and derivation of an optimal harvesting
  policy of a prey--predator model with {{Holling}} type-{{IV}} functional
  scheme.
\newblock pages 1--20.

\bibitem{Stochastic8}
Sampurna Sengupta, Pritha Das, and Debasis Mukherjee.
\newblock Stochastic non-autonomous {{Holling}} type-iii prey-predator model
  with predator's intra-specific competition.
\newblock 23(8):3275--3296.

\bibitem{VP3}
P.~D.~N. Srinivasu and B.~S. R.~V. Prasad.
\newblock Role of {{Quantity}} of {{Additional Food}} to {{Predators}} as a
  {{Control}} in {{Predator}}--{{Prey Systems}} with {{Relevance}} to {{Pest
  Management}} and {{Biological Conservation}}.
\newblock 73(10):2249--2276.

\bibitem{VP2}
P.~D.~N. Srinivasu and B.~S. R.~V. Prasad.
\newblock Time optimal control of an additional food provided predator--prey
  system with applications to pest management and biological conservation.
\newblock 60:591--613.

\bibitem{V4DEDS}
P.~D.~N. Srinivasu, D.~K.~K. Vamsi, and I.~Aditya.
\newblock Biological {{Conservation}} of {{Living Systems}} by {{Providing
  Additional Food Supplements}} in the {{Presence}} of {{Inhibitory Effect}}:
  {{A Theoretical Study Using Predator}}--{{Prey Models}}.
\newblock 26(1-3):213--246.

\bibitem{V3JTB}
P.~D.~N. Srinivasu, D.~K.~K. Vamsi, and V.~S. Ananth.
\newblock Additional food supplements as a tool for biological conservation of
  predator-prey systems involving type {{III}} functional response: {{A}}
  qualitative and quantitative investigation.
\newblock 455:303--318.

\bibitem{VP1}
P.D.N. Srinivasu, B.S.R.V. Prasad, and M.~Venkatesulu.
\newblock Biological control through provision of additional food to predators:
  {{A}} theoretical study.
\newblock 72(1):111--120.

\bibitem{AAAA9}
Yuan Tian, Xinrui Yan, and Kaibiao Sun.
\newblock Dual effects of additional food supply and threshold control on the
  dynamics of a {{Leslie}}--{{Gower}} model with pest herd behavior.
\newblock 185:115163.

\bibitem{Type4-7}
Anuj~Kumar Umrao, Subarna Roy, Pankaj~Kumar Tiwari, and Prashant~K. Srivastava.
\newblock Dynamical behaviors of autonomous and nonautonomous models of
  generalist predator--prey system with fear, mutual interference and nonlinear
  harvesting.
\newblock 183:114891.

\bibitem{urbaneja2013sugar}
Pablo Urbaneja-Bernat, Miquel Alonso, Alejandro Tena, Karel Bolckmans, and
  Alberto Urbaneja.
\newblock Sugar as nutritional supplement for the zoophytophagous predator
  nesidiocoris tenuis.
\newblock {\em BioControl}, 58(1):57--64, 2013.

\bibitem{vandekerkhove2010pollen}
Bjorn Vandekerkhove and Patrick De~Clercq.
\newblock Pollen as an alternative or supplementary food for the mirid predator
  macrolophus pygmaeus.
\newblock {\em Biological Control}, 53(2):238--242, 2010.

\bibitem{volterra1927variazioni}
Vito Volterra.
\newblock Variazioni e fluttuazioni del numero d'individui in specie animali
  conviventi.
\newblock 1927.

\bibitem{ControlDelay}
Kexin Wang.
\newblock Influence of feedback controls on the global stability of a
  stochastic predator-prey model with {{Holling}} type ii response and infinite
  delays.
\newblock 25(5):1699--1714.

\bibitem{winkler2005plutella}
Karin Winkler, Felix~L W{\"a}ckers, Attila Stingli, and Joop~C Van~Lenteren.
\newblock Plutella xylostella (diamondback moth) and its parasitoid diadegma
  semiclausum show different gustatory and longevity responses to a range of
  nectar and honeydew sugars.
\newblock {\em Entomologia Experimentalis et Applicata}, 115(1):187--192, 2005.

\bibitem{Stochastic4}
Dongsheng Xu, Ming Liu, and Xiaofeng Xu.
\newblock Analysis of a stochastic predator--prey system with modified
  {{Leslie}}--{{Gower}} and {{Holling-type IV}} schemes.
\newblock 537:122761.

\bibitem{yano2012cooperative}
Shuichi Yano.
\newblock Cooperative web sharing against predators promotes group living in
  spider mites.
\newblock {\em Behavioral Ecology and Sociobiology}, 66(6):845--853, 2012.

\bibitem{OptimalControlStochastic}
Qian-Bao Yin, Xiao-Bao Shu, Yu~Guo, and Zi-Yu Wang.
\newblock Optimal control of stochastic differential equations with random
  impulses and the {{Hamilton}}--{{Jacobi}}--{{Bellman}} equation.
\newblock 45(5):2113--2135.

\end{thebibliography}

\end{document}